\numberwithin{equation}{section}
\newtheorem{lemma}[equation]{Lemma}
\newtheorem{lem}[equation]{Lemma}
\newtheorem{prop}[equation]{Proposition}
\newtheorem{cor}[equation]{Corollary}
\newtheorem{claim*}{Claim}
\newtheorem{thm}[equation]{Theorem}
\newtheorem{question}[equation]{Question}
\theoremstyle{definition}
\newtheorem{defn}[equation]{Definition}
\newtheorem{dfn}[equation]{Definition}
\newtheorem{example}[equation]{Example}
\newtheorem{ex}[equation]{Example}
\newtheorem{constr}[equation]{Construction}
\newtheorem{nota}[equation]{Notation}
\theoremstyle{remark}
\newtheorem{remark}[equation]{Remark}
\newtheorem{rem}[equation]{Remark}
\numberwithin{equation}{section}
\def\ann{\operatorname{ann}}
\def\depth{\operatorname{depth}}
\def\Ext{\operatorname{Ext}}
\def\Hom{\operatorname{Hom}}
\def\I{\operatorname{I}}
\def\id{\mathrm{id}}
\def\im{\operatorname{im}}
\def\jac{\operatorname{Jac}}
\def\ker{\operatorname{ker}}
\def\lhom{\operatorname{\underline{Hom}}}
\def\m{\mathfrak{m}}
\def\mod{\operatorname{mod}}
\def\n{\mathfrak{n}}
\def\p{\mathfrak{p}}
\def\soc{\operatorname{soc}}
\def\spec{\operatorname{Spec}}
\def\sing{\operatorname{Sing}}
\def\syz{\Omega}
\def\Tor{\operatorname{Tor}}
\def\Tr{\operatorname{Tr}}
\def\sh{\operatorname{Sh}}
\def\syz{\operatorname{Syz}}
\def\Syz{\operatorname{Syz}}
\def\sing{\operatorname{\mathrm{Sing}}}
\def\rk{\operatorname{rank}}
\def\a{\operatorname{\textbf{a}}}
\def\u{\operatorname{\textbf{u}}}
\def\ca{\operatorname{\mathrm{ca}}}
\def\I{\operatorname{\mathbf{I}}}
\def\itot{\operatorname{\mathbf{I}^{tot}}}
\def\istab{\operatorname{\mathbf{I}^{stab}}}
\def\C{\operatorname{\mathrm{C}}}
\def\F{F}
\def\ord{\operatorname{ord}}
\def\cond{\mathfrak{c}}
\def\nc{\newcommand}
\def\on{\operatorname}
\def\th{\on{th}}
\nc{\from}{\leftarrow}
\nc{\xra}{\xrightarrow}
\nc{\xla}{\xleftarrow}
\def\:{\colon}
\nc{\tQ}{\widetilde{Q}}
\nc{\tf}{\widetilde{f}}
\nc{\Db}{\on{D}^{\on{b}}}
\nc{\tD}{\widetilde{D}}
\nc{\coker}{\on{coker}}
\def\Z{\mathbb{Z}}
\nc{\odd}{\on{odd}}
\nc{\even}{\on{even}}
\nc{\tS}{\widetilde{S}}
\nc{\mf}{\on{mf}}
\nc{\hmf}{\on{hmf}}
\nc{\op}{\on{op}}
\nc{\pd}{\on{pd}}
\nc{\Perf}{\on{Perf}}
\nc{\st}{\on{st}}
\def\MR#1{}
\nc{\into}{\hookrightarrow}
\nc{\onto}{\twoheadrightarrow}
\nc{\Jac}{\on{Jac}}
\begin{document}
\setlength{\baselineskip}{14pt}
\title{Periodicity of ideals of minors in free resolutions}
\author{Michael K. Brown}

\author{Hailong Dao}

\author{Prashanth Sridhar}

\newcommand{\Addresses}{{
	\vskip\baselineskip
  	\footnotesize
  	\noindent \textsc{Department of Mathematics and Statistics, Auburn University} \par\nopagebreak
	\noindent \textit{E-mail addresses:} \texttt{mkb0096@auburn.edu, prashanth.sridhar0218@gmail.com}
  	\vskip\baselineskip
  	\noindent \textsc{Department of Mathematics, University of Kansas} \par\nopagebreak
	\noindent \textit{E-mail address:} \texttt{hdao@ku.edu}
}}

\begin{abstract}
We study the asymptotic behavior of the ideals of minors in minimal free resolutions over local rings. In particular, we prove that such ideals are eventually 2-periodic over complete intersections and Golod rings. We also establish general results on the stable behavior of ideals of minors in any infinite minimal free resolution. These ideals have intimate connections to  trace ideals and cohomology annihilators.  Constraints on the stable values attained by the  ideals of minors in many situations are obtained, and they can be explicitly computed in certain cases. 
\end{abstract}
\keywords{} 
\subjclass[2010]{}
\maketitle

\section{Introduction}

Ideals of minors of differentials in free resolutions play an important role in commutative algebra and algebraic geometry. Perhaps the most fundamental example of this is Buchsbaum-Eisenbud's acyclicity criterion \cite{BE}, which characterizes those finite free complexes that are resolutions in terms of the ranks and ideals of minors of their differentials. There has been a tremendous amount of work on understanding the differentials in a finite free resolution; such papers are too numerous to list here. For instance, a question of great interest is: how many matrices appearing in the resolution are linear (i.e. how many have entries consisting only of 0's or degree 1 forms). This so-called $N_p$ property has deep connections to the geometry of modules and ideals involved.  Fascinating work and open conjectures abound even in this rather special case of partially linear resolutions; we refer the reader to  \cite{farkas} for a relatively short survey. 

Over singular rings, most resolutions are {\it infinite}. There, much less is known about the differential matrices in general. Perhaps the most famous instance that we understand is Eisenbud's theory of matrix factorizations (\cite{Eisenbud_1980}), which essentially captures the phenomenon that over a local hypersurface, any minimal free resolution is eventually $2$-periodic. This theory has had striking ramifications far beyond commutative algebra. Another relevant active topic is the study of Koszul algebras, i.e. standard graded algebras over a field whose residue field has a linear free resolution. Interested readers may start with a couple of comprehensive surveys on infinite resolutions \cite{Avramov1998, McCPeeva}. 

In this paper, we study and establish strong stabilizing patterns, including periodicity, for ideals of minors in infinite 
minimal resolutions. There has been little indication that such behavior is expected. Apart from the hypersurface case mentioned above, our work is motivated by a recent result in \cite{DE}, where it is proved that over a large class of depth zero rings (technically, those with Burch index at least $2$), the $7^{\th}$ syzygy of {\it any} nonfree module contains the residue field as a direct summand. Thus, the ideals of minors at high enough steps of any infinite resolution over such a ring are given by the maximal ideal and its powers. In fact, the following question (and potential answers in several cases) was raised after extensive discussions and \verb|Macaulay2|~\cite{M2} experimentations by the second author and David Eisenbud:

\begin{question}
\label{introquestion1}
    Given  a positive integer $r$ and a finitely generated module $M$  over a local ring $R$, must there be a number $n(M)$ such that the sum of the ideals of $r\times r$ minors in $n(M)$ consecutive differential matrices of the minimal resolution of $M$ eventually becomes constant? Is there an integer only depending on $R$ that bounds all $n(M)$ from above?
 \end{question}

Note that if the ideal of minors (for a given rank) eventually become periodic, then the answer to the first part of the above question is yes. 
A small number of previous papers have studied the topic of ideals of minors in infinite resolutions, e.g. \cite{EG, EH, KOH1998671, Wang1994OnTF}, but as far as we know, none of them address periodicity or the weaker version in Question~\ref{introquestion1}.

Our first main result is that  ideals of minors in minimal free resolutions over local complete intersections are eventually 2-periodic. More precisely:

\begin{thm}[see Corollary~\ref{cor:easy} and Theorem~\ref{CI_periodicity}]
\label{intro1}
Let $S$ be a regular local ring, $I \subseteq S$ an ideal generated by a regular sequence, $M$ a finitely generated  module over the complete intersection $R =S/I$, $(F, d)$ the minimal $R$-free resolution of $M$, and $\I^r_i(M)$ the ideal generated by the $r \times r$ minors of $d_i : F_i \to F_{i-1}$.  
\begin{enumerate}
\item We have $\I^r_i(M) = \I^r_{i+2}(M)$ for all $r \ge 1$ and $i \gg 0$.
\item When $F$ is a Shamash resolution (e.g. when $M$ is the residue field of $R$), we have $\I^r_i(M) = \I^r_{i+2}(M)$ for all $r \ge 1$ and $i \ge r \cdot \dim(S) + 4(r-1)+ 1$.
\end{enumerate}
\end{thm}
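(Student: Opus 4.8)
The plan is to deduce part (1) from part (2) via a reduction to the Shamash setting, so I focus on part (2). Recall the Eisenbud--Shamash construction: if $P_\bullet$ is the minimal $S$-free resolution of $M$ and $\Gamma = R\langle y_1,\dots,y_c\rangle$ is the divided power algebra with $\deg y_j = 2$, then $\widetilde{F} := P \otimes_S \Gamma$, with $\widetilde{F}_i = \bigoplus_{a+2|b|=i}(P_a \otimes_S R)\,y^{(b)}$ and differential $d = d_P \otimes 1 + \sum_{j=1}^c \sigma_j \otimes \partial_{y_j}$, is an $R$-free resolution of $M$; here $\sigma_j \colon P \to P$ is a first homotopy for multiplication by $f_j$. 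When the minimal resolution $F$ equals such a $\widetilde{F}$, every $d_i$ is an explicit block matrix whose nonzero blocks are copies of the reductions mod $I$ of $d_P$ and of the $\sigma_j$, arranged according to the monomial lattice $\{(a,b):a+2|b|=i\}$.

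The key observation is that, for each index $j$, the span $W_j \subseteq \widetilde{F}$ of the monomials with $b_j=0$ is a subcomplex — $d$ never raises the $y_j$-degree — and is exactly a Shamash-type complex built from $P$ and the $c-1$ variables $y_1,\dots,\widehat{y_j},\dots,y_c$; moreover the contraction $\nu_j \colon \widetilde{F} \to \widetilde{F}[-2]$, $w\otimes y^{(b)}\mapsto w\otimes y^{(b-\varepsilon_j)}$, is a chain map fitting into a short exact sequence of complexes $0 \to W_j \to \widetilde{F} \xrightarrow{\nu_j} \widetilde{F}[-2] \to 0$ that is split in each degree. Choosing monomial splittings, $d_i$ acquires the block-upper-triangular form $\bigl(\begin{smallmatrix} d^{W_j}_i & C_i \\ 0 & d_{i-2}\end{smallmatrix}\bigr)$ with lower-right block conjugate to $d_{i-2}$. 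Reading off the bottom-right submatrix gives $\I^r_{i-2}(M) \subseteq \I^r_i(M)$ for all $r$ and $i$; hence the chains $\I^r_1(M)\subseteq\I^r_3(M)\subseteq\cdots$ and $\I^r_2(M)\subseteq\I^r_4(M)\subseteq\cdots$ stabilize by Noetherianity, which already gives the $2$-periodicity. For the explicit bound one must control these chains quantitatively: expanding determinants along the block-triangular structure yields a recursion
\[
\I^r_i(M) = \sum_{q+s=r} \I^q(d^{W_j}_i)\cdot \I^s\!\begin{pmatrix} C_i \\ d_{i-2}\end{pmatrix},
\]
in which $d^{W_j}$ is a Shamash-type differential in $c-1$ of the variables and $d_{i-2}$ has strictly smaller homological degree. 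Unwinding this recursion downward in $i$ and inducting on $c$ (base case $c=0$, where $P$ is finite so the differentials vanish for $i>\pd_S M$, and $\pd_S M \le \dim(S)$), one sees that only boundedly many factors are ever proper ideals and that the "small-degree" factors range over a fixed finite set, so $\I^r_i(M)$ is eventually independent of $i$; tracking the accumulated degree shifts should produce a bound of the shape $r\cdot\dim(S) + 4(r-1) + 1$.

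Two points will be the main work. First, one must verify on the nose that $W_j$ and the contraction $\nu_j$ behave as claimed and that the induced differential on $W_j$ is again of Shamash type; this is routine but requires care with the divided-power (rather than polynomial) structure of $\Gamma$ and with the higher homotopies, especially if one does not assume $\operatorname{char} k = 0$. Second — and this is where the real difficulty lies — the recursion above is not a clean product formula: the connecting maps $C_i$ produce "mixed" terms $\I^s\bigl(\begin{smallmatrix} C_i \\ d_{i-2}\end{smallmatrix}\bigr)$ involving $s\times s$ minors straddling the two blocks, and a product $\I^a \cdot \I^b$ of ideals of minors of a fixed matrix need not lie in $\I^{a+b}$, so these terms cannot be absorbed naively; one has to re-embed the relevant smaller submatrices, which live on disjoint "slices" of the $y$-grading, disjointly into a single differential comfortably inside the stable range, and it is precisely the cost of such re-embeddings that forces the factor $r\cdot\dim(S)$ in the bound.

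Finally, part (1) follows from part (2) by noting that over a complete intersection the minimal resolution of any module is, after a shift, a direct summand of a Shamash resolution as a complex — equivalently one may invoke the paper's general stabilization results in the complete intersection case — the only subtlety being that passing to a complex summand can only adjoin trivial summands $R \xrightarrow{\id} R$ to the differentials, whose contribution to the ideals of minors is easily controlled.
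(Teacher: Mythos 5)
Your argument has a sound kernel for part (2) but a genuine gap in the reduction to part (1), and the quantitative bound in part (2) is not established.

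For part (2): the observation that the Shamash differential does not raise the $y_j$-degree, so that the submodule $W_j$ spanned by monomials with $b_j=0$ is a subcomplex, the contraction $\nu_j$ is a chain map, and the resulting degreewise-split short exact sequence gives a block-triangular form exhibiting $d_{i-2}$ as a submatrix of $d_i$ --- that is exactly the mechanism of Proposition~\ref{prop:shamashc1} in the paper (there stated for $c=1$, but the same argument works for any $c$ and any choice of $j$). It immediately gives the containments $\I^r_{i-2}(\sh(G,\sigma)) \subseteq \I^r_i(\sh(G,\sigma))$ and hence, by Noetherianity, \emph{eventual} 2-periodicity. So the qualitative half of part (2) is fine. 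However, the explicit bound $i \ge r\cdot\dim(S)+4(r-1)+1$ is not obtained: the recursion you write down, $\I^r_i = \sum_{q+s=r}\I^q(d^{W_j}_i)\cdot\I^s(\cdot)$, is not valid as an equality of ideals (determinant expansion along a block-triangular matrix does not decompose minors that way, and in any case products $\I^a\cdot\I^b$ of ideals of minors of a single matrix need not land in $\I^{a+b}$, as you yourself note). Your plan to ``re-embed slices disjointly'' is precisely where the work is, and it is left unresolved. The paper gets the bound by a genuinely different route: it packages the Shamash data as a graded matrix factorization over $\widetilde{S}=S[t_1,\dots,t_c]$, exploits $\widetilde{S}$-linearity of the differential, and in Lemma~\ref{lem:technical} explicitly builds a diagonal $r\times r$ submatrix of $d_\ell$ with prescribed $1\times1$ minors on the diagonal, using monomials $t_1^{s-(i-1)p}t_2^{(i-1)p}$ and the presence of at least two auxiliary variables. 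That construction is what produces the linear-in-$r$ threshold; nothing of that kind appears in your sketch.

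For part (1): the proposed reduction is not correct. It is true that the minimal resolution $F$ is a direct summand of any Shamash resolution $\widetilde{F}$ of $M$, so $\widetilde{F}\cong F\oplus T$ with $T$ a trivial complex. But the contribution of $T$ to the ideals of minors is not ``easily controlled.'' Writing $s_i=\operatorname{rank}(d^T_i)$, one finds $\I^r_i(\widetilde{F})=\I^{\max(0,\,r-s_i)}_i(F)$ (the identity blocks in $d^T_i$ absorb $s_i$ of the $r$ rows/columns and contribute units). To deduce $\I^q_i(F)=\I^q_{i+2}(F)$ from $\I^r_i(\widetilde{F})=\I^r_{i+2}(\widetilde{F})$ one would need $s_i=s_{i+2}$ for $i\gg0$, and there is no reason for that: $s_i$ grows with the gap between the Betti numbers of $\widetilde{F}$ and of $F$, which over a complete intersection of codimension $\ge 2$ grows polynomially and is certainly not eventually constant. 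So the periodicity of $\widetilde{F}$ does not transfer to $F$ this way. The paper instead proves part (1) via Eisenbud--Peeva's higher matrix factorization theory: a sufficiently high syzygy of $M$ is a minimal HMF module, and its minimal resolution is literally a Shamash construction on a \emph{single} element $f_c$ applied to the minimal free resolution over $S/(f_1,\dots,f_{c-1})$. That is a hard structural input, not a formal consequence of part (2), and without it your part (1) does not go through.

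Finally, note that the paper's proof of part (2) actually shows more than periodicity: for $\ell$ past the threshold, $\I^r_\ell$ equals the $r$-th power $(\I^1_{n})^r$ of a single fixed ideal of $1\times1$ minors. Your containment argument cannot see that, which is another indication that the bound requires the $\widetilde{S}$-linear, two-variable bookkeeping and not just the subcomplex/contraction picture.
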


We review the notion of a Shamash resolution below; see \S\ref{sec:shamashbackground}. Our proof of Part~(1) invokes Eisenbud-Peeva's theory of higher matrix factorizations over complete intersections \cite{EP,EP_LR}. Our argument for Part (2) relies on an interpretation of Shamash resolutions in terms of matrix factorizations, based on insights of Burke-Stevenson~\cite{BS}, Burke-Walker~\cite{BW}, and Martin~\cite{martin}. Taking $r = 1$ in Part (2), one sees that the ideals of $1 \times 1$ minors in $F$ become 2-periodic after $\dim(S) + 1$ steps, which is precisely Eisenbud's upper bound for the 2-periodicity of minimal free resolutions over hypersurfaces \cite[Theorem 6.1(i)]{Eisenbud_1980}. Since every minimal free resolution over a hypersurface is a Shamash resolution, one may therefore view Theorem~\ref{CI_periodicity}(2) as an extension of Eisenbud's periodicity theorem for minimal free resolutions over hypersurfaces. 

The dependence of the upper bound in Theorem~\ref{CI_periodicity}(2) on the rank $r$ of the minors is necessary (Remark~\ref{rem:necessary}). Moreover, while the bound in Theorem~\ref{CI_periodicity}(2) for Shamash resolutions is independent of the module $M$, no such global bound exists for general minimal free resolutions over complete intersections (Example~\ref{uniform_bound_example}). 

We now turn to Golod rings. There is a sense in which the homological behavior of complete intersections and Golod rings could not be less alike: Betti numbers over complete intersections grow as slowly as possible, while Betti numbers over Golod rings grow as fast as possible. And yet, minimal free resolutions over Golod rings also have  eventually 2-periodic ideals of minors and hence exhibit a finiteness property analogous to the complete intersection case:

\begin{thm}[see Theorem~\ref{Golod_periodicity}]
\label{intro2}
Let $S$ be a regular local ring, $I \subseteq S$ an ideal such that $R = S/I$ is Golod, $M$ a finitely generated $R$-module, $(F, d)$ the minimal $R$-free resolution of $M$, and $\I^r_i(M)$ the ideal generated by the $r \times r$ minors of $d_i : F_i \to F_{i-1}$.
\begin{enumerate}
\item We have $\I^r_i(M) = \I^r_{i+2}(M)$ for all $r \ge 1$ and $i \gg 0$.
\item If $\pd_S(R) \ge 2$, then $\I^r_i(M) = \I^r_{i+1}(M)$ for all $r \ge 1$ and $i \gg 0$.
\end{enumerate}
\end{thm}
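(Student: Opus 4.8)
The plan is to exploit the explicit structure of the minimal free resolution over a Golod ring, due to Golod, together with the bar-type/Shamash-type construction that gives the resolution of an arbitrary module $M$. Recall that for a Golod ring $R = S/I$ with minimal $S$-free resolution $G$ of $R$ and a module $M$ with minimal $S$-free resolution $F^M$, there is an explicit construction of the minimal $R$-free resolution of $M$ as (after tensoring up to $R$) a "trivial Massey operation" resolution whose underlying free module in homological degree $n$ is a direct sum of tensor products $F^M_{a} \otimes (\bar G_{b_1} \otimes \cdots \otimes \bar G_{b_k})$, where $\bar G$ denotes the truncation $G_{\ge 1}$, and whose differential is lower-triangular with respect to the number $k$ of bar-factors: the "diagonal" blocks are induced by $d^{F^M}$ and $d^G$, and the strictly-lower blocks are given by the Massey products $\mu$. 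The key point is that for $i \gg 0$ this decomposition becomes \emph{stable}: the set of tuples $(a; b_1, \dots, b_k)$ contributing to degree $i$ is, up to the obvious shift "$i \mapsto i+1$" obtained by prepending a bar-factor from $\bar G_1$, independent of $i$; indeed once $i$ exceeds $\pd_S(M) + \pd_S(R)$ or so, every contributing tuple has at least one $b_j = 1$, and one has a bijection between tuples in degree $i$ and degree $i+1$ (resp.\ $i+2$) compatible with the differential.

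\medskip

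\emph{Step 1.} Set up the Golod resolution explicitly and record the block form of $d_i$ with respect to the filtration by number of bar-factors. Show that the entries of each block are, up to the identifications above, \emph{independent of $i$ for $i \gg 0$}: the diagonal blocks repeat because $d^G$ and $d^F$ are eventually captured by finitely many matrices appearing in fixed tuples, and the Massey product blocks $\mu$ are $R$-linear maps determined once and for all by $G$ and $F^M$. The upshot is an identification of $d_{i+1}$ (resp.\ $d_{i+2}$) with $d_i$ after reindexing, which immediately gives $\I^r_{i+1}(M) \subseteq \I^r_i(M)$ and the reverse, hence Part~(2), \emph{provided} the shift by one is actually realizable — this is where $\pd_S(R) \ge 2$ enters, since it guarantees $\bar G_1 \ne 0$ and moreover that $\bar G$ contributes in consecutive degrees so that prepending a single generator of $\bar G_1$ shifts degree by exactly $1$; when $\pd_S(R) = 1$ (the hypersurface case) $\bar G$ is concentrated in a single degree and one only gets the shift by $2$, which is Part~(1). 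Actually, for Part~(1) in general one prepends a \emph{pair} of bar-factors (or a single one of degree... no), so let me phrase it as: Part~(1) follows from the eventual periodicity of the tuple-decomposition under $i \mapsto i+2$, which holds with no hypothesis on $\pd_S(R)$ because $\bar G \otimes \bar G$ always contributes something in degree-shift $2$ once $\bar G \ne 0$ (the degenerate case $I = 0$, i.e.\ $R$ regular, is trivial since resolutions are finite).

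\medskip

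\emph{Step 2.} Carefully verify the compatibility of the reindexing bijection with the differential, i.e.\ that the shift map intertwines $d_i$ and $d_{i+1}$ (resp.\ $d_{i+2}$) as matrices, not merely as maps of the "same shape." The diagonal blocks are clearly compatible. The subtlety is the Massey-product blocks: one must check that the Massey operation $\mu$ applied to a tuple $(a; 1, b_1, \dots, b_k)$ agrees, under the identification with $(a; b_1, \dots, b_k)$, with $\mu$ applied to the latter — this should follow from the construction of trivial Massey operations, since $\mu$ is defined recursively and the recursion is "stable" once a bar-factor of degree $1$ is present and can be carried along passively. I expect this bookkeeping to be the \textbf{main obstacle}: the Golod resolution's differential is only well-defined up to a choice of Massey operation, and one needs to either fix a choice that is manifestly "eventually constant" under the shift, or argue that the ideals of minors are independent of the choice (which is plausible since different choices give isomorphic complexes via a filtered isomorphism, but one must check the isomorphism can be taken to preserve the minimality and the relevant gradings so that ideals of minors are genuinely unchanged).

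\medskip

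\emph{Step 3.} Conclude. Once Steps 1--2 are in place, $\I^r_i(M) = \I^r_{i+2}(M)$ for $i \gg 0$ (all $r$) gives Part~(1), and under $\pd_S(R) \ge 2$ the sharper shift gives $\I^r_i(M) = \I^r_{i+1}(M)$ for $i \gg 0$, which is Part~(2). As a sanity check, Part~(2) fails for hypersurfaces ($\pd_S(R) = 1$), consistent with the genuine $2$-periodicity (and not $1$-periodicity) of matrix factorizations, so the hypothesis $\pd_S(R) \ge 2$ is not an artifact of the proof. An alternative, perhaps cleaner route to Step~2 would be to bypass explicit Massey operations and instead use a known model for the Golod resolution as a twisted tensor product or as the total complex of an explicit bicomplex built from $F^M$ and the (shifted, truncated) Koszul-type data of $R$, in which the eventual periodicity of the total differential is visible termwise; I would try this first and fall back on the hands-on Massey-operation argument only if the clean model does not give fine enough control on the matrices.
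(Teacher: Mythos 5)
Your proposal is in the same spirit as the paper's argument (exploit the explicit structure of the Golod minimal free resolution via the bar construction / trivial Massey operations), but there are several substantive errors that would prevent it from going through as written.

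First, a gap in the setup: the bar construction over $A$ gives the \emph{minimal} $R$-free resolution of $M$ only when $M$ is a Golod \emph{module}, not merely when $R$ is a Golod ring. The paper first invokes Lescot's theorem (Theorem~\ref{golod_syzygies}) to reduce to the case where $M$ itself is Golod; your proposal skips this reduction entirely.

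Second, and more fundamentally, the claimed ``identification of $d_{i+1}$ with $d_i$ after reindexing'' and the accompanying ``bijection between tuples in degree $i$ and degree $i+1$'' are simply false. Over a non-regular Golod ring the Betti numbers of $M$ grow (polynomially or exponentially), so $B_{i+1}$ has strictly larger rank than $B_i$: no bijection and no identification of $d_i$ with $d_{i+1}$ exists. What \emph{is} true --- and what the paper proves as a ``Claim'' --- is that $A_t \otimes_S B_n$ is a direct \emph{summand} of $B_{n+t+1}$ (note the shift by $t+1$, not $t$: each bar factor $A_\ell$ contributes $\ell + 1$ to homological degree because of the $[-1]$ suspension in the bar construction), and that the corresponding diagonal block of $d_{n+t+1}$ is $\id_{A_t} \otimes d_n$. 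This yields only the one-sided containment $\I^r_n(M) \subseteq \I^r_{n+t+1}(M)$; the reverse containment is \emph{not} automatic and is instead obtained from Noetherianity of $R$ applied to the ascending chain. Your proposal, believing it has an identification, asserts both containments at once, which is a genuine gap. Your related claim that ``once $i$ exceeds $\pd_S(M) + \pd_S(R)$ or so, every contributing tuple has at least one $b_j = 1$'' is also false (e.g.\ the tuple with all $b_j = 2$ contributes in degree $3k$ for all $k$).

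Third, the degree bookkeeping causes you to misattribute the role of the hypothesis $\pd_S(R) \ge 2$. You assert that prepending a generator of $A_1$ shifts degree by $1$; it shifts degree by $2$. The correct mechanism for Part~(2) is: prepending from $A_1$ gives a shift of $2$ and prepending from $A_2$ gives a shift of $3$; when $\pd_S(R) \ge 2$ both $A_1$ and $A_2$ are nonzero, so the ideals of minors are eventually both $2$-periodic and $3$-periodic, hence $1$-periodic. Part~(1) uses only $A_1 \ne 0$, which holds unless $R$ is regular (the trivial case). Finally, the Massey-operation compatibility you flag as the ``main obstacle'' is not really the sticking point; it is handled cleanly by the $A_\infty$-formalism of Burke (the bar differential is explicitly given by the maps $b_{i,j}$ and $b'_i$ in Construction~\ref{bar_resolution}, and one checks directly which of these preserve the summand $A_t \otimes B_n$). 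The real obstacles are the ones named above.
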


Our proof of Theorem~\ref{intro2} uses Burke's structure theorem for minimal free resolutions over Golod rings in terms of $A_\infty$-operations and the bar construction \cite{Golod_burke} (see also \cite{iyengarchange}). The proofs of Theorems~\ref{intro1} and~\ref{intro2} both exploit multiplicative structures on free resolutions: we leverage the Eisenbud operators on the Shamash construction in the complete intersection case and $A_\infty$-operations in the Golod case.

Golod rings are quite ubiquitous in commutative algebra. For instance, by a result of Herzog-Welker-Yassemi \cite[Theorem 4.1]{HWY}, if $R$ is regular local and $I$ is an ideal in $R$, then $R/I^k$ is Golod for $k \gg 0$. Additionally, by work of Herzog-Huneke \cite[Theorem 2.1(d)]{HH}, if $R$ is an $\mathbb{N}$-graded polynomial ring over a characteristic 0 field, and $I$ is an ideal in $R$, then $R/I^k$ and $R/I^{(k)}$ are Golod for $k \ge 2$; here, $I^{(k)}$ denotes the $k^{\th}$ symbolic power of $I$. Combining Theorems~\ref{intro1} and~\ref{intro2} therefore gives a large family of local rings whose minimal free resolutions have eventually 2-periodic ideals of minors. 

For instance, given a local ring $R$ with embedding dimension $e$ and depth $d$, it is well-known that, when the codepth $e - d$ of $R$ is at most $2$, $R$ is either Golod or a complete intersection. Theorems~\ref{intro1} and~\ref{intro2} therefore immediately imply:

\begin{cor}
If $R$ is a local ring with codepth at most 2, then the ideals of minors in the minimal free resolution of any finitely generated $R$-module are eventually 2-periodic.
\end{cor}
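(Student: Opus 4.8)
The plan is to deduce the corollary directly from Theorems~\ref{intro1} and~\ref{intro2} together with the classical structure theory of local rings of small codepth. First I would recall why codepth at most $2$ forces a dichotomy: for a local ring $R$ with embedding dimension $e$ and depth $d$, one passes to a minimal Cohen presentation $\widehat{R} = S/I$ with $S$ a regular local ring of dimension $e$ and $\projdim_S \widehat{R} = e - d \le 2$; by the Hilbert--Burch theorem (codepth $1$ being the hypersurface case, codepth $2$ being the case of ideals with a $(t+1)\times t$ presentation), $R$ is either a complete intersection or Golod. This is a well-known result, often attributed to the analysis of small codimension in the work of Avramov and others; I would cite a standard reference (e.g. \cite{Avramov1998}) rather than reprove it.

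Next, I would note that the ideals of minors $\I^r_i(M)$ of the minimal $R$-free resolution of a finitely generated $R$-module $M$ are unaffected by completion: the minimal free resolution of $\widehat{M}$ over $\widehat{R}$ is obtained from that of $M$ over $R$ by applying $\widehat{R} \otimes_R -$, the differential matrices are literally the same matrices viewed over $\widehat{R}$, and $\I^r_i(\widehat{M}) = \I^r_i(M)\widehat{R}$. Since $\I^r_i(M) = \I^r_{i+2}(M)$ holds if and only if it holds after faithfully flat base change to $\widehat{R}$, we may assume $R$ is complete, hence of the form $S/I$ as above.

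With this reduction in hand, the proof splits into the two cases. If $R$ is a complete intersection, Theorem~\ref{intro1}(1) gives $\I^r_i(M) = \I^r_{i+2}(M)$ for all $r \ge 1$ and $i \gg 0$. If instead $R$ is Golod, Theorem~\ref{intro2}(1) gives the same conclusion. In either case the ideals of minors are eventually $2$-periodic, which is exactly the assertion of the corollary.

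I do not anticipate a genuine obstacle here: the content of the corollary is entirely in the two main theorems, and the only thing to check carefully is the harmless passage to the completion and the invocation of the codepth-$\le 2$ dichotomy. The mild subtlety worth stating explicitly is that ``Golod'' and ``complete intersection'' are both properties of $\widehat{R}$ in this argument (the structure theorem is most naturally phrased for the complete ring), but since $R$ is Golod, respectively a complete intersection, if and only if $\widehat{R}$ is, and since the ideals of minors are insensitive to completion, this causes no difficulty.
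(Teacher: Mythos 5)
Your argument is correct and follows the same route as the paper: invoke the classical dichotomy that a local ring of codepth at most $2$ is either a complete intersection or Golod, then apply Theorems~\ref{intro1}(1) and~\ref{intro2}(1). Your explicit passage to the completion to obtain a Cohen presentation (a point the paper leaves implicit, though it is addressed for the complete intersection case in Remark~\ref{ciremarks}) is a sound extra degree of care, and the small imprecision in citing Hilbert--Burch for codepth $2$---which tacitly assumes $\operatorname{grade}(I)=2$, not automatic in the non-Cohen--Macaulay case (e.g.\ $I=(x^2,xy)$)---is harmless since you rightly defer the dichotomy itself to a standard reference rather than reprove it.
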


It is not the case that every minimal free resolution has eventually 2-periodic ideals of minors. Indeed, there exists an Artinian local ring $R$ of codepth 4 and a minimal $R$-free resolution whose ideals of $1 \times 1$ minors are not $n$-periodic for any $n$: see \Cref{prop:counter}(2), which is essentially due to Gasharov-Peeva \cite[Proposition 3.1]{Gasharov-Peeva_90}. We pose the following questions on this theme:

\begin{question}
Does there exist a local ring $R$ of codepth 3  and a finitely generated $R$-module $M$ whose minimal free resolution does not have eventually $n$-periodic ideals of minors, for any $n \ge 1$? 
\end{question}

\begin{question}
\label{question2}
Does there exist a local ring $R$ and $n>2$ such that the ideals of minors of minimal free resolutions of finitely generated $R$-modules are always eventually $n$-periodic, but not 2-periodic?
\end{question}

As an application of Theorems~\ref{intro1} and~\ref{intro2}, we obtain the following periodicity result for top exterior powers of syzygies over complete intersections and Golod rings:

\begin{cor}[see Corollary~\ref{cor:exterior}]
\label{intro4}
    Let $S$ be a regular local ring, $I$ an ideal of $S$, $M$ a finitely generated $R = S/I$-module, and $\Syz_i(M)$ (resp. $\beta_i$) the $i^{\th}$ syzygy (resp. Betti number) of $M$ over $R$. 
    \begin{enumerate}
        \item If $R$ is a complete intersection, then $\bigwedge^{\beta_{i}}\syz_i(M)\cong \bigwedge^{\beta_{i+2}}\syz_{i+2}(M)$ for $i \gg 0$. If, in addition, the minimal free resolution of $M$ is a Shamash resolution, then this isomorphism holds for $i \ge \dim(S) + 1$. 
        \item If $R$ is Golod, then $\bigwedge^{\beta_{i}}\syz_i(M)\cong \bigwedge^{\beta_{i+2}}\syz_{i+2}(M)$ for $i \gg 0$. If, in addition, we have $\pd_S(R) \ge 2$, then $\bigwedge^{\beta_{i}}\syz_i(M)\cong \bigwedge^{\beta_{i+1}}\syz_{i+1}(M)$ for $i \gg 0$.
        \item Suppose $\depth(R)=0$ and that the Burch index of $R$ (see \cite[Definition 1.2]{DE}) is at least two. For all $i\geq 5$, we have $\bigwedge^{\beta_i}\syz_i(M)\cong k$.
    \end{enumerate}
\end{cor}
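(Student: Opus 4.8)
The plan is to reduce all three parts to the $r=1$ cases of the periodicity theorems above, using the classical identification of the top exterior power of a module with a Fitting ideal. The key reduction is the following. For every $i\ge1$ the minimal free resolution $(F,d)$ gives an exact sequence $F_{i+1}\xrightarrow{d_{i+1}}F_i\to\Syz_i(M)\to0$, so $\Syz_i(M)\cong\coker(d_{i+1})$ with $F_i$ free of rank $\beta_i$. I will use the standard fact that for any $R$-linear map $\phi\colon F'\to F$ with $F$ free of rank $n$ there is an isomorphism $\bigwedge^n\coker(\phi)\cong R/I_1(\phi)$, where $I_1(\phi)$ is the ideal generated by the entries of a matrix for $\phi$ (equivalently, $\operatorname{Fitt}_{n-1}(\coker\phi)$). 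This follows from the right-exactness of $\bigwedge^n(-)$: applied to $F\onto\coker(\phi)$ it identifies $\bigwedge^n\coker(\phi)$ with $\bigwedge^nF$ modulo the image of the map $F'\otimes_R\bigwedge^{n-1}F\to\bigwedge^nF$, $x\otimes\omega\mapsto\phi(x)\wedge\omega$, and expanding $\phi(x)$ in a basis of $F$ shows this image equals $I_1(\phi)\cdot\bigwedge^nF$. Taking $\phi=d_{i+1}$ gives $\bigwedge^{\beta_i}\Syz_i(M)\cong R/\I^1_{i+1}(M)$ for all $i\ge1$.

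Granting this, parts (1) and (2) are formal. For (1), Theorem~\ref{intro1}(1) with $r=1$ gives $\I^1_{i+1}(M)=\I^1_{i+3}(M)$ for $i\gg0$, hence $\bigwedge^{\beta_i}\Syz_i(M)\cong R/\I^1_{i+1}(M)=R/\I^1_{i+3}(M)\cong\bigwedge^{\beta_{i+2}}\Syz_{i+2}(M)$; when $F$ is a Shamash resolution, Theorem~\ref{intro1}(2) with $r=1$ gives $\I^1_{j}(M)=\I^1_{j+2}(M)$ for $j\ge\dim(S)+1$, so the isomorphism holds once $i+1\ge\dim(S)+1$, which in particular covers the stated range $i\ge\dim(S)+1$. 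For (2), the same argument applied to Theorem~\ref{intro2}(1) with $r=1$ gives the eventual $2$-periodicity, and when $\pd_S(R)\ge2$, Theorem~\ref{intro2}(2) with $r=1$ gives $\I^1_{i+1}(M)=\I^1_{i+2}(M)$ for $i\gg0$, whence $\bigwedge^{\beta_i}\Syz_i(M)\cong\bigwedge^{\beta_{i+1}}\Syz_{i+1}(M)$.

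For part (3), assume $M$ is not free (otherwise the syzygies vanish and there is nothing to prove). By \cite{DE}, under the stated hypotheses — $\depth(R)=0$ and Burch index at least two — the syzygy $\Syz_i(M)$ contains the residue field $k$ as a direct summand for $i\ge5$. Writing $\Syz_i(M)\cong k\oplus N$ and using additivity of Fitting ideals over direct sums together with the identification above (note $\operatorname{Fitt}_0(k)=\m$, that $\operatorname{Fitt}_{\mu(N)}(N)=R$, and that the remaining contributing Fitting ideals of $N$ lie in $\m$ since they come from a minimal presentation), one computes $\I^1_{i+1}(M)=\m$; therefore $\bigwedge^{\beta_i}\Syz_i(M)\cong R/\m=k$.

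I do not expect a genuine obstacle here: the mathematical content lives entirely upstream, in Theorems~\ref{intro1} and~\ref{intro2} and in \cite{DE}, and this corollary is a formal translation through the exterior-power/Fitting identification. The only points that need care are the proof of that identification and the bookkeeping of the explicit lower bounds — in particular, checking that the relevant statement in \cite{DE} does yield exactly the range $i\ge5$ in part (3).
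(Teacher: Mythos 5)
Your proof is correct and follows essentially the same route as the paper's: identify $\bigwedge^{\beta_i}\Syz_i(M)$ with $R/\I^1_{i+1}(M)$ and then feed this into Corollary~\ref{cor:eisenbud}, Theorems~\ref{CI_periodicity} and~\ref{Golod_periodicity}, and the syzygy result of \cite{DE}. The only difference is that you prove the exterior-power/Fitting identification directly from the right-exactness of $\bigwedge^n$, whereas the paper simply cites \cite[Corollary 1.3]{BE_77}; and the caveat you flag about the bound $i\ge 5$ in part (3) is well taken, since the paper also just outsources that step to \cite[Theorem 0.1]{DE}.
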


When $R$ is a hypersurface, Corollary~\ref{intro4}(1) follows from \cite[Theorem 6.1(i)]{Eisenbud_1980}. We refer the reader to \cite{DE} for various concrete examples of rings of depth zero and Burch index at least two. 

\medskip
The rest of the main results of this paper concern \emph{stable ideals of minors} (see Definition~\ref{def:stable}). Given a module $M$ with minimal free resolution $F$, the stable ideal of $r \times r$ minors of $M$ is the limit as $n \to \infty$ of the sum of all the ideals of $r \times r$ minors in the smart truncation $F_{\ge n}$. Our key results in this direction are summarized as follows:

\begin{thm}
[see Theorems~\ref{thm:nonzero}, \ref{thm:jacobian}, \ref{residue_field_stable_ideals}, and~\ref{thm:conductor}]
\label{intro5}
Let $(R, \m, k)$ be a local ring and $M$ a finitely generated $R$-module. Denote by $\istab(M, r)$ the stable ideal of $r \times r$-minors in the minimal free resolution of $M$ (see Definition~\ref{def:stable}). We have the following:
\begin{enumerate}
\item $\istab(M, 1) \ne 0$.
\item Let $\Jac(R)$ be the Jacobian ideal of $R$ (see \cite[\S 3]{IYENGAR2021280} for the definition of $\Jac(R)$). There exists  $t \ge 1$, which is independent of $M$, such that $\Jac(R)^t \subseteq \istab(M, 1)$.
\item Let $K$ be the Koszul complex on a minimal generating set of $\m$. If $\dim_k H_1(K)\geq 2$, then $\istab(k,r)=\m^r$ for all $r\geq 1$. 
\item Assume $R$ is a 1-dimensional analytically unramified local ring, and let $\cond_R$ denote the conductor ideal of $R$ (see Definition \ref{def:conductor}). 
\begin{enumerate}
\item We have $\cond_R\subseteq \istab(M,1)$.
\item Assume $R$ is Gorenstein with infinite residue field. There exists a finitely generated $R$-module $N$ such that $\cond_R =  \istab(N,1)$  if and only if $R$ is regular or a hypersurface of multiplicity 2.
\end{enumerate}
\end{enumerate}
\end{thm}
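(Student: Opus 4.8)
Throughout I will write $(F,d)$ for the minimal free resolution of $M$ and $\Syz_i=\Syz_i(M)=\ker(d_{i-1})=\operatorname{im}(d_i)\subseteq F_{i-1}$, and I will take $\pd_R M=\infty$ (when $\pd_R M<\infty$ all four claims are vacuous, or hold by the relevant convention, since the truncated complex is eventually zero). The basic observation I use repeatedly is that $\I^1_i(M)$ is the ideal generated by the entries of $d_i$, equivalently the smallest ideal $J$ with $\Syz_i\subseteq J\,F_{i-1}$, equivalently $\I^1_i(M)=\operatorname{Fitt}_{\mu(\Syz_{i-1})-1}(\Syz_{i-1})$. Part (1) is then immediate: by minimality no $d_i$ ($i\ge1$) vanishes, so each $\I^1_i(M)\ne0$, and the descending chain $n\mapsto\sum_{i\ge n}\I^1_i(M)$ of ideals in the Noetherian ring $R$ stabilizes at a nonzero ideal. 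For part (2) the plan is to prove $\ca(R)\subseteq\istab(M,1)$ and then apply the containment $\Jac(R)^{t}\subseteq\ca(R)$ for a suitable $t$ (through the theory of \cite{IYENGAR2021280}; here $t$ depends only on $R$). Fix $a\in R$ annihilating $\Ext^{\ge n}_R(-,-)$ and $i\ge\max(n,2)$. Since $a$ kills the class of $0\to\Syz_i\to F_{i-1}\to\Syz_{i-1}\to0$ in $\Ext^1_R(\Syz_{i-1},\Syz_i)$, the connecting map forces $a\cdot\id_{\Syz_{i-1}}$ to factor through $F_{i-1}$; comparing the scalar lifting of $a\cdot\id_{\Syz_{i-1}}$ along the minimal resolution of $\Syz_{i-1}$ with the lifting coming from this factorization produces, for every $m\ge i$, maps $\sigma\colon F_m\to F_{m+1}$ and $\tau\colon F_{m-1}\to F_m$ with $a\cdot\id_{F_m}=d_{m+1}\sigma+\tau d_m$. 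Reading off a diagonal entry gives $a\in\I^1_m(M)+\I^1_{m+1}(M)\subseteq\istab(M,1)$. (These same identities bound $\istab(M,1)$ above by trace ideals of high syzygies, the other connection advertised in the introduction.)

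For part (3) I would use the Tate (acyclic-closure) model $F=R\langle e_1,\dots,e_e;\,T_1,\dots,T_{\epsilon_2};\dots\rangle$ of the minimal free resolution of $k$: the $e_j$ are exterior variables of homological degree $1$ with $d(e_j)=x_j$ (the $x_j$ a minimal generating set of $\m$), the $T_l$ are divided-power variables of degree $2$ with $d(T_l)=z_l$ a $1$-cycle of the Koszul complex $K$, and $\epsilon_2=\dim_k H_1(K)$. Minimality gives $\istab(k,r)\subseteq\m^r$. For the reverse inclusion, $\epsilon_2\ge2$ (which also forces $e\ge2$) guarantees at least $c+1$ distinct degree-$2c$ divided-power monomials $T^{\vec w}$ for each $c$. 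Given a degree-$r$ monomial $x_{j_1}^{a_1}\cdots x_{j_s}^{a_s}$, choose $r$ such monomials $T^{\vec w_1},\dots,T^{\vec w_r}$ of degree $2(r-1)$, assign the factors $x_{j_t}$ to the $\vec w_p$ with the prescribed multiplicities, and look at the $r\times r$ submatrix of $d_{2r-1}$ with rows $T^{\vec w_1},\dots,T^{\vec w_r}$ and columns $e_{j_{(p)}}T^{\vec w_p}$. Since $d(e_jT^{\vec w})=x_j\,T^{\vec w}+(\text{terms carrying an exterior variable})$, the only pure divided-power monomial appearing is $x_jT^{\vec w}$, so this submatrix is diagonal with determinant $x_{j_1}^{a_1}\cdots x_{j_s}^{a_s}$; hence $\m^r\subseteq\I^r_{2r-1}(k)$, and letting $2r-1$ range over odd integers gives $\m^r\subseteq\istab(k,r)$. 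Higher Tate variables are irrelevant because $d$ is a derivation, so $d(e_jT^{\vec w})$ is computed inside the sub-DG-algebra on the $e$'s and $T$'s.

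Part (4a): it suffices to show $\cond_R\subseteq\I^1_1(N)=\operatorname{Fitt}_{\mu(N)-1}(N)$ whenever $N$ is a nonfree torsion-free $R$-module, and to take $N=\Syz_{i-1}$ for $i\ge2$. Let $\overline N=\bar R\otimes_RN/(\text{torsion})$, a free $\bar R$-module, and let $L\subseteq\bar R^{\mu(N)}$ be the first $\bar R$-syzygy of $\overline N$ with respect to generators lifted from $N$. Then $L$ is a nonzero free direct summand of $\bar R^{\mu(N)}$ of rank $\mu(N)-\operatorname{rank}N>0$, so the ideal generated by all coordinates of elements of $L$ is the unit ideal of $\bar R$ (checked branch-by-branch on the semilocal PID $\bar R$, using that $L$ has positive rank at each branch). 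On the other hand every element of $\cond_R\cdot L$ has coordinates in $\cond_R\bar R\subseteq R$, so $\cond_R\cdot L\subseteq\Syz_1^R(N)$; passing to coordinate ideals yields $\cond_R=\cond_R\cdot\bar R\subseteq\operatorname{Fitt}_{\mu(N)-1}(N)=\I^1_1(N)$. Hence $\cond_R\subseteq\I^1_i(M)$ for all $i\ge2$, so $\cond_R\subseteq\istab(M,1)$. For the ``$\Leftarrow$'' half of (4b), when $e(R)=2$ I take $N=\bar R$: since $\mu_R(\bar R)=e(R)=2$ we may write $\bar R=R+R\theta$, and a relation among $1,\theta$ is exactly a pair $(-b\theta,b)$ with $b\theta\in R$, i.e.\ $b\in(R:_R\bar R)=\cond_R$ — and then also $b\theta\in\cond_R$. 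So every relation has coordinates in $\cond_R$, giving $\I^1_1(\bar R)\subseteq\cond_R$ and hence, with (4a), $\I^1_1(\bar R)=\cond_R$; since $R$ is a hypersurface the resolution of $\bar R$ is eventually $2$-periodic and, by the rigidity of rank-one modules with two generators (all relations being governed by the conductor), the same computation applies along the resolution, so $\istab(\bar R,1)=\cond_R$. The case $e(R)=1$ ($R$ a DVR) is the degenerate one, $\cond_R=R$.

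The main obstacle is the ``$\Rightarrow$'' half of (4b). Suppose $\istab(N,1)=\cond_R$ for some $N$. Replacing $N$ by a high syzygy we may take $N$ maximal Cohen--Macaulay and nonfree, and then (4a) forces $\I^1_i(N)=\cond_R$ for all $i\gg0$, i.e.\ $\Syz_i(N)\subseteq\cond_R\cdot F_{i-1}$ eventually; writing $\cond_R\bar R=(\gamma)$, each $\bar d_i$ factors over $\bar R$ as $\gamma$ times a matrix of unit content. I expect the contradiction, when $e(R)\ge3$, to come from pushing this through $\bar d_i\bar d_{i+1}=0$ together with colength/multiplicity bookkeeping. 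For a plane-curve hypersurface $R=S/(f)$ this says the two matrices of the matrix factorization both have entries in the preimage $\cond\subseteq S$ of $\cond_R$, whence $f\in\cond^2$, and comparing orders forces $e(R)=\operatorname{ord}(f)\le2$. For a general one-dimensional Gorenstein $R$ one must substitute for ``$f\in\cond^2$'': the natural tools are Gorenstein duality (so that $N^{\ast}$ is again MCM with the same invariants, and $\bar R^{\ast}\cong\cond_R$) together with the identities $\ell(\bar R/R)=\ell(R/\cond_R)$ and $\mu_R(\bar R)=e(R)$, which should convert ``the resolution lives inside the conductor'' into the bound $e(R)\le2$. Making this last reduction go through uniformly over all one-dimensional Gorenstein rings is the crux.
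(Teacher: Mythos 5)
Your argument for part (1) is incorrect: Noetherian rings satisfy the \emph{ascending}, not the descending, chain condition, so the chain $J_n:=\sum_{i\ge n}\I^1_i(M)$ need not stabilize, and a priori $\istab(M,1)=\bigcap_n J_n$ could vanish even though each $J_n\ne0$ (compare $(x)\supsetneq(x^2)\supsetneq\cdots$ in $k[[x]]$). This nonvanishing is exactly the content of Theorem~\ref{thm:nonzero}, and the paper's proof requires real work: one first shows, via the Koh--Lee-type Propositions~\ref{koh_lee_prop} and~\ref{complexes_koh_lee}, that there is a fixed $k$ with $\I^1_i(F)\not\subseteq\m^k$ for all large $i$, and then (after completing) applies Chevalley's lemma to derive a contradiction from $\istab(M,1)=0$.

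Your proofs of (2) and (3) are sound --- in (2) you give a nullhomotopy argument where the paper instead cites Wang's identity $\ann\Tor^R_r(N_r,R/I)=I$; both yield $\ca(R)\subseteq\istab(M,1)$. But (4a) has a genuine gap: the intermediate claim $\cond_R\subseteq\I^1_1(N)$ for every nonfree torsion-free $N$ is false. Take $R=k[[x,y]]/(xy)$ and $N=(y)=\Syz_1(R/(y))\cong R/(x)$; then $\I^1_1(N)=(x)$ while $\cond_R=(x,y)$. The error lies in assuming $N$ has a well-defined rank and that the $\bar R$-syzygy $L$ has positive rank at every branch; in this example $L=k[[x]]\times 0$ vanishes at the branch $k[[y]]$, so its coordinate ideal is not the unit ideal of $\bar R$. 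The statement you should be aiming for is $\cond_R\subseteq\I^1_i(M)+\I^1_{i+1}(M)$, which \emph{does} hold in this example; the paper obtains it by showing $\cond_R\subseteq\ca(R)$ via Wang's result that $\cond_R$ annihilates $\Ext^1_R(N,-)$ for MCM $N$, then invoking part (2). Finally, you explicitly flag $\Rightarrow$ of (4b) as incomplete; the paper's route is quite different from what you anticipate: it passes to the Artinian quotient by a minimal reduction $x$ of $\m$, applies the Loewy-length bound $\ord(\istab(-,1))\le\lfloor l/2\rfloor$ of Lemma~\ref{Artinian_socle}, and uses $\soc(R/xR)\subseteq\m_{R/xR}^{l-1}$ (from Gorensteinness) to force $l\le 2$.
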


Concerning Theorem~\ref{intro5}(2), we note that Iyengar-Takahashi's definition of the Jacobian ideal of a local ring generalizes the usual notion for affine algebras over a field. Theorem~\ref{intro5}(2) is an instance of the philosophy that the singularities of the ring $R$ govern the asymptotic behavior of minimal $R$-free resolutions \cite[pp. 4]{Avramov1998}. Parts (2) and (3) of Theorem~\ref{intro5} build on closely related results of Wang \cite[Theorems 5.1 and 5.2]{Wang1994OnTF} and Eisenbud-Green \cite[Theorem 1.1]{EG}, respectively. See \S\ref{pastresults} for a more detailed recollection of the history of results on ideals of minors in minimal free resolutions that influenced this work.

We now give a brief summary of the paper. In \S\ref{sec:stable}, we prove Theorem~\ref{intro5}, among many other related results on stable ideals of minors. In \S\ref{CI_section} and \S\ref{golod_section}, we prove Theorems~\ref{intro1} and~\ref{intro2}, respectively. We discuss in \S\ref{sec:counterexample} the aforementioned counterexample to periodicity of ideals of minors over general local rings. Finally, in \S\ref{sec:moreresults}, we explore some applications of Theorems~\ref{intro1} and~\ref{intro2}, e.g. Corollary~\ref{intro4}.
   
\subsection*{Acknowledgments}
Many questions and ideas that started this work were raised during the second author's collaboration with David Eisenbud, whose insights and generosity we gratefully acknowledge. We thank Luigi Ferraro, Srikanth Iyengar, Frank Moore, Claudiu Raicu, and Mark Walker for valuable conversations. We also thank Sarasij Maitra and Josh Pollitz for catching an error in an earlier draft of this paper. The computer algebra system \verb|Macaulay2|~\cite{M2} provided valuable assistance throughout this project. The second author was partially supported by Simons Collaboration Grant FND0077558. The third-named author acknowledges the grant GA \v{C}R 20-13778S from the Czech Science Foundation for support during a part of this work. Finally, we thank the referee for their helpful comments.
\section*{Notation and Conventions}
\label{intronotation}
\begin{enumerate}
\item All rings are assumed to be commutative, and \emph{local} means \emph{Noetherian local}.
\item We use homological indexing throughout; the shift of a complex $C$ is given by $C[1]_j = C_{j + 1}$ and $d_{C[1]} = -d_C$. 
\item For a complex $C$, we let $\inf(C) = \inf\{i \;|\; C_i\neq 0\}$, and similarly for $\sup(C)$. 
\item Given a ring $R$ and an $R$-linear map $g : F \to F'$ of free $R$-modules of finite rank, we denote by $\I^r(g)$ the ideal in $R$ generated by the $r \times r$ minors of $g$. When $r$ is greater than the rank of $F$ or $F'$, we set $\I^r(g) = 0$; and when either $F$ or $F'$ is 0, we set $\I^r(g) = R$. 
\item Given a complex $(C, d)$ of finitely generated free $R$-modules, let $\I^r_i(C)$ denote the ideal $\I^r(d_i : C_i \to C_{i-1})$. When $R$ is local and $M$ is a finitely generated $R$-module with minimal free resolution $F$, we set $\I^r_i(M) \coloneqq \I^r_i(F)$; these ideals are independent of the choice of $F$. 
\item Given a local ring $R$ and a finitely generated $R$-module $M$, we denote by $\syz_n(M)$ the $n^{\th}$ syzygy in a minimal free resolution of $M$.
\item Although we work over a local ring throughout most of the paper, all of our results that are stated over local rings hold over an $\mathbb{N}$-graded ring $R$ such that $R_0$ is local and $R$ is a finitely generated $R_0$-algebra.
\end{enumerate}

\section{Stable Ideals of Minors}
\label{sec:stable}

\subsection{Past Results}
\label{pastresults}

Perhaps the first systematic study of ideals of minors in free resolutions was undertaken by Eisenbud-Green in \cite{EG}. Among other results, Eisenbud-Green prove in \cite{EG} a conjecture of Huneke regarding the relationship between ideals of minors in a free resolution of a module $M$ and the annihilator of $M$. Building on \cite{EG} and also results of Popescu-Roczen \cite{Popescu-Roczen}, Wang studies in \cite{Wang1994OnTF} the connection between the aforementioned conjecture of Huneke and the annihilator ideal of the functor $\Ext^{d+1}_R( - ,- )$, where $R$ is a local ring of Krull dimension $d$. The main results of \cite{Wang1994OnTF} include versions of Eisenbud-Green's main result \cite[Theorem 1.1]{EG} with the annihilator of the module replaced by the Jacobian ideal \cite[Theorems 5.1 and 5.2]{Wang1994OnTF}. Some results of Wang in \cite{Wang1994OnTF} are later built upon by Iyengar-Takahashi in their study of cohomology annihilators \cite{iyengar_takahashi_ca}. Koh-Lee also extend some results of \cite{EG} and \cite{Wang1994OnTF} in their paper \cite{KOH1998671}; we revisit Koh-Lee's results in detail in \S\ref{kohlee}.

Recent work with a specific focus on the behavior of the entries of matrices in minimal free resolutions includes the following. Motivated by a question of Eisenbud-Reeves-Totaro in \cite{ERT} about bounding the degrees of the entries in minimal graded free resolutions, Eisenbud-Huneke establish in \cite{EH} a uniform Artin-Rees-type property for the maps in the minimal free resolution of a finitely generated module over a local ring with an isolated singularity. Additionally, motivated by work of the second author and Kobayashi-Takahashi \cite{DKT}, the second author and Eisenbud obtain in \cite{DE} constraints on the ideals of $1 \times 1$ minors in minimal free resolutions over local rings via their notion of the Burch index.

Our main goal in this section is to prove each part of Theorem~\ref{intro5}, building on much of the above work. We provide detailed commentary throughout this section on the relationship between the present work and the results discussed above.

\subsection{Definitions}

While our ultimate interest is in minimal free resolutions of modules, many of our results easily extend to minimal free resolutions of (bounded below) complexes, and so we work at this level of generality throughout this section.

\begin{dfn}
\label{def:stable}
Let $R$ be a local ring and $\C^{+}(\mod \- R)$ the category of bounded below complexes of finitely generated $R$-modules. Suppose $M\in \C^{+}(\mod \- R)$, and let $\F\xrightarrow{\simeq}M$ be its minimal free resolution (such a resolution exists and is unique; see e.g. \cite[Proposition 4.4.2]{roberts_1998}). For any positive integer $r$, define
\begin{enumerate}
    \item $\itot(M,r) = \sum_{n\geq \inf(F)}\I^r_n(\F)$, the \emph{total ideal of $r \times r$ minors of $M$.}
    \item $\istab(M,r)=\bigcap_{n\geq \inf(F)}\itot(\F_{\geq n},r)$, the \emph{stable ideal of $r \times r$ minors of $M$.} Here, $\F_{\geq n}$ denotes the smart truncation.
    \item $\istab(\C^{+}(\mod \- R),r)=\bigcap_{M\in \C^{+}(\mod \- R)} \itot(M,r)$.
\end{enumerate}
 Let $\C^{b}(\mod \- R)$ and $\C^{+, f}(\mod \- R)$ be the full subcategories of $\C^{+}(\mod \- R)$ given by bounded complexes and complexes with bounded homology, respectively. We define the ideals $\istab(\C^{b}(\mod \- R),r), \istab(\C^{+, f}(\mod \- R),r)$, and $\istab(\mod(R),r)$ analogously.
\end{dfn}
Let $M \in \C^+(\mod \- R)$ with minimal free resolution $F$.  The \emph{projective dimension} of $M$ is defined to be $\sup(F)$. The complex $M$ is called \emph{perfect} if it has finite projective dimension. 
The following results are immediate consequences of the definitions:
\begin{prop}
Let $R$ be a local ring and $M \in \C^+(\mod \- R)$.
\begin{enumerate}
\item We have $\itot(M,1) \neq 0$.
\item The complex $M$ is perfect if and only if $\istab(M,r)=R$ for some $r>0$. 
\item $\istab(\C^+(\mod \- R),r)=\bigcap_{M\in \C^+(\mod \- R)} \istab(M,r)$.

\item $R$ has finite global dimension if and and only if $\istab(\C^{b}(\mod \- R),r)=R$.

\end{enumerate}
\end{prop}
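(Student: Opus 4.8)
The plan is to establish each of the four statements directly from Definition~\ref{def:stable}, using only the existence and uniqueness of minimal free resolutions over a local ring together with standard facts about projective dimension and ideals of minors.

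For part (1): I would recall that if $F$ is the minimal free resolution of $M \in \C^+(\mod\-R)$, then $M$ has nonzero homology (we are implicitly assuming $M \neq 0$ in the complex sense), so $F$ is not the zero complex. Let $i = \inf(F)$; then $F_i \neq 0$, and since the resolution is minimal, the differential $d_{i+1} : F_{i+1} \to F_i$ has entries in $\m$ (or $F_{i+1} = 0$). If $F_{i+1} = 0$ then $M$ is, up to shift, a free module and $\I^1_i(F)$ need not obviously be nonzero — so instead I would argue via the lowest differential that is actually present, or simply note: $F$ being a nonzero complex of free modules, some $\I^1_n(F) \neq 0$, because otherwise every differential would be zero, forcing every $F_n$ with $n > \inf(F)$ to contribute to homology, contradicting that $F$ has bounded-below homology only if $F$ is a direct sum of shifts of $R$ — and in that degenerate case $M$ is a finite direct sum of shifted free modules, for which we can still exhibit a nonzero $1\times 1$ minor after noting the convention $\I^r(g) = R$ when the source or target is $0$. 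I expect this part to be essentially bookkeeping once the edge cases (perfect complexes, trivial differentials) are handled via the stated conventions in the Notation section.

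For part (2): The key observation is that $\istab(M,r) = R$ for some $r > 0$ if and only if there is an $n$ with $\itot(F_{\ge m}, r) = R$ for all... no — more carefully, $\istab(M,r) = \bigcap_{n}\itot(F_{\ge n}, r) = R$ forces $\itot(F_{\ge n}, r) = R$ for every $n \ge \inf(F)$. By the stated conventions, $\itot(F_{\ge n}, r) = R$ in particular when $F_{\ge n} = 0$, i.e. when $n > \sup(F)$. Conversely, if $M$ is perfect, then $\sup(F) < \infty$, so for $n > \sup(F)$ the truncation $F_{\ge n}$ is zero and contributes $R$; and for the finitely many smaller $n$ one chooses $r$ larger than the maximum rank of any $F_i$, so that every $\I^r_i(F) $... wait, that gives $0$, not $R$. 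The correct direction: if $M$ is perfect with $\sup(F) = p$, then $F_p$ is the last nonzero term, $F_{p+1} = 0$, and by the Buchsbaum–Eisenbud acyclicity criterion (or simply minimality at the end) the map $d_p$ is injective, so $\I^{r}(d_p) = R$ where $r = \rk F_p$; hence $\itot(F_{\ge n}, r) \ni \I^r(d_p) = R$ for all $n \le p$, and $= R$ trivially for $n > p$, giving $\istab(M,r) = R$. Conversely if $M$ is not perfect, every truncation $F_{\ge n}$ is an infinite minimal complex, and I would invoke part (2) of the \emph{preceding} displayed Proposition-in-spirit — actually here I should use that for an infinite minimal free resolution the differentials have entries in $\m$, so every $\I^r_i(F) \subseteq \m$ for $i \gg 0$, whence $\itot(F_{\ge n}, r) \subseteq \m \subsetneq R$, so the intersection lies in $\m$. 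The hard part is cleanly matching the convention $\I^r(g) = R$ for zero source/target with the acyclicity criterion; I would lean on Buchsbaum–Eisenbud \cite{BE}.

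Parts (3) and (4) are formal. For (3): $\istab(\C^+(\mod\-R), r) = \bigcap_{M} \itot(M,r)$ by definition, and one checks $\itot(M,r) \supseteq \istab(M,r)$ always (the total ideal over the whole resolution contains each truncated total ideal, hence their intersection), while conversely, applying the definition of $\itot(\C^+, r)$ to the shifted truncations $F_{\ge n}[{-n}]$ — which are again objects of $\C^+(\mod\-R)$ with minimal free resolution $F_{\ge n}[{-n}]$ — shows $\istab(\C^+(\mod\-R), r) \subseteq \itot(F_{\ge n}[{-n}], r) = \itot(F_{\ge n}, r)$ for every $n$, so it is contained in $\istab(M,r)$; intersecting over $M$ gives equality. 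For (4): $R$ has finite global dimension iff every finitely generated module — equivalently every bounded complex — is perfect, so by part (2), $\istab(M,r) = R$ for suitable $r$ depending on $M$; taking the intersection over all $M \in \C^b(\mod\-R)$ and using that $r$ can be chosen uniformly (bounded by $\dim R + 1$, say, via the Auslander–Buchsbaum–Serre theorem) gives $\istab(\C^b(\mod\-R), r) = R$ for that $r$; conversely if $\istab(\C^b(\mod\-R), r) = R$ then in particular $\istab(k, r) = R$, forcing $k$ to be perfect, hence $R$ regular. The main obstacle throughout is simply being careful with the shift/truncation conventions and the boundary cases built into the definition of $\I^r$; none of the four parts should require any substantive new input beyond \cite{BE} and the structure of minimal resolutions.
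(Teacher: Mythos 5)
The paper itself labels all four parts ``immediate consequences of the definitions'' and supplies no argument, so the relevant comparison is whether your reconstruction is sound. The decisive step is the convention in the Notation section that $\I^r(g) = R$ whenever the source or target of $g$ is the zero module, and this is where your proposal goes wrong.

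In part (2), you correctly reject the first attempt (``choose $r$ larger than the ranks'') because that makes the minors vanish, but then pivot to claiming that injectivity of the last differential $d_p : F_p \to F_{p-1}$ (via Buchsbaum--Eisenbud or minimality) forces $\I^{\rk F_p}(d_p) = R$. This is false: injectivity of a map of free modules over a local ring only guarantees that the ideal of maximal minors has positive grade, not that it is the unit ideal. In fact $d_p$ is a \emph{minimal} map, so all its entries lie in $\m$ and $\I^{\rk F_p}(d_p) \subseteq \m$; e.g.\ the top differential $(x_1,\dots,x_d)^{\mathrm t}$ of the Koszul complex over a regular local ring is injective with minor ideal $\m$, not $R$. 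The argument that actually works is the one the conventions are designed for: since $F_{p+1} = 0$, the convention gives $\I^r_{p+1}(F) = R$ for every $r \ge 1$, and the index $p+1$ lies in the range of the sum defining $\itot(F_{\ge n}, r)$ for every $n \ge \inf(F)$. Hence $\itot(F_{\ge n}, r) = R$ for all $n$ and all $r$, so $\istab(M,r) = R$ for all $r$ once $M$ is perfect. No choice of $r$, no appeal to \cite{BE}, and no injectivity argument is needed. The converse direction of (2) in your proposal (minimal infinite resolution forces $\I^r_i(F) \subseteq \m$ for $i \gg 0$) is fine.

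Part (1) is in the same spirit: you flag the edge cases but do not close them. The same convention settles it immediately, since $F_{\inf(F)-1} = 0$ forces $\I^1_{\inf(F)}(F) = R$, so $\itot(M,1) = R \neq 0$ whenever $M$ is not acyclic. Part (3) is correct, and is exactly the intended formal argument. Part (4) then follows from (2) and (3), but your worry about choosing $r$ ``uniformly in $M$'' is a non-issue: since $\istab(M,r)=R$ for \emph{every} $r$ when $M$ is perfect, no uniformity is needed. The converse (a nonregular $R$ gives $\istab(k,r) \subsetneq R$, hence $\istab(\C^b,r) \subsetneq R$) is fine as you sketch it.
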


\begin{remark}
It is possible that $\itot(M,r)=0$ for $r>1$. For instance, take $R$ to be Artinian and $r$ greater than the Loewy length of $R$. 
\end{remark}
\subsection{Extending some results of Koh-Lee} 
\label{kohlee}
Our main results on stable ideals of minors rely on work of Koh-Lee \cite{KOH1998671} on ideals of minors in minimal free resolutions of modules. Since we work with minimal free resolutions of complexes rather than modules, we must slightly extend these results. The arguments we give here are minor variations of those appearing in \cite{KOH1998671}.

The socle of a complex $M$ is defined to be the complex $\lhom_R(k,M)$, where $k$ is concentrated in degree zero. Given a complex $(M, d^M)$, set 
\[
s(M)\coloneqq \inf\{t\geq 1\;|\; \soc(M)\not\subseteq \m^tM+\im(d^M)\};
\]
this is a generalization of \cite[Definition 1.1]{KOH1998671}.

\begin{prop}[cf. \cite{KOH1998671} Proposition 1.2(i)] \label{koh_lee_prop}
    Let $(R,\m)$ be a  local ring, $(M, d^M)\in\C^+(\mod \- R)$, and $(N,d^N)\in \C^{b}(\mod \- R)$. Assume $N$ is minimal and $\soc(N)\not\subseteq \im(d^N)$. Let $\F\xrightarrow{\simeq}M$ be the minimal free resolution of $M$. If an integer $i$ satisfies
    \begin{enumerate}
    \item[(a)] $\inf(F) + \sup(N) \le i\le \sup(F)+\inf(N)$, and 
    \item[(b)] $\Tor^R_i(M,N)=0$;
    \end{enumerate}
    then $\I^1_{i+1}(\F)\not\subseteq \m^{s(N)}$.
\end{prop}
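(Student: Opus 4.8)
Set $t:=s(N)$. The hypothesis $\soc(N)\not\subseteq\im(d^N)$ guarantees $t<\infty$: indeed $\bigcap_{j\ge1}\bigl(\m^jN+\im(d^N)\bigr)=\im(d^N)$ by the Krull intersection theorem applied to the finitely generated module $N/\im(d^N)$. So it suffices to argue by contradiction: assuming $\I^1_{i+1}(\F)\subseteq\m^t$, I will deduce $\soc(N)\subseteq\m^tN+\im(d^N)$, contradicting the definition of $t$. The argument is modeled on Koh--Lee's proof of their Proposition 1.2(i), the only genuinely new point being the passage from a module to a bounded complex $N$.

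The plan is: form the total complex $T:=\F\otimes_RN$, so that $H_j(T)=\Tor^R_j(M,N)$ and, by (b), $H_i(T)=0$; dispose of the cases $\F_i=0$ or $\F_{i+1}=0$ at once via the convention $\I^1(g)=R$ for a zero free module, so assume $\F_i,\F_{i+1}\ne0$. Two structural facts then drive the proof. First, since $\F$ is minimal, $d^\F\otimes 1$ annihilates $\F_p\otimes\soc(N_q)$ for all $p,q$, so the ``socle part'' $S\subseteq T$, with $S_m=\bigoplus_{p+q=m}\F_p\otimes\soc(N_q)$ and differential induced by $d^N$, is a subcomplex; hence $Z_\bullet(S)\subseteq Z_\bullet(T)$, and in particular $Z_i(S)\subseteq B_i(T)$ because $H_i(T)=0$. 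Second, since $N$ is minimal, $\im(d^N)\subseteq\m N$. Writing a socle cycle of $T$ in total degree $i$ as a boundary of $T_{i+1}$ and isolating the component of that equation in which the only differential of $\F$ that occurs is $d_{i+1}$, one finds that $\I^1_{i+1}(\F)\subseteq\m^t$ together with the second fact forces that component into $\F_i\otimes\bigl(\m^tN+\im(d^N)\bigr)$, which (since $\F_i$ is free and nonzero) yields $\soc(N)\subseteq\m^tN+\im(d^N)$.

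The delicate point — and where the bound (a) and the amplitude of $N$ enter — is organizing the last step so that the socle witness for $s(N)$ lands in the homological slot tied to $d_{i+1}$. Condition (a) is precisely the statement that $\sup(N)-\inf(N)\le\sup(\F)-\inf(\F)$ and that every internal degree $q$ of $N$ is matched, via $p=i-q$, by a homological degree of $\F$ in the range $[\inf\F,\sup\F]$, which is what permits such a witness (wherever it sits inside $N$) to be realized as an honest cycle of $T$ in total degree $i$. I expect the cleanest way to run this is induction on $\sup(N)-\inf(N)$: the base case $\sup(N)=\inf(N)$ is Koh--Lee's module statement, whose proof goes through for a complex $M$ once one checks $\F_i\otimes\soc(N)\subseteq Z_i(\F\otimes N)\subseteq B_i(\F\otimes N)\subseteq\F_i\otimes\m^tN$ and concludes $\soc(N)\subseteq\m^tN$; for the inductive step one peels off an extreme internal degree of $N$ via a short exact sequence of complexes $0\to N'\to N\to N''\to0$ with $N''$ a single module and $N'$ of strictly smaller amplitude, transporting $\Tor^R_i(M,N)=0$ down the associated long exact sequence and using the hyper-$\Tor$ spectral sequence $E^2_{p,q}=\Tor^R_p(M,H_q(N))\Rightarrow\Tor^R_{p+q}(M,N)$ to keep the relevant indices within the window imposed by (a). The main obstacle, then, is this bookkeeping — verifying that the socle witness, which need be neither a cycle of $N$ nor located in the internal degree that $d_{i+1}$ directly ``sees,'' is nonetheless detected by a boundary relation in $T$ in which the entries of $d_{i+1}$ (forced into $\m^t$) are the decisive term and the contributions of the other differentials $d_j$ ($j\ne i+1$) are excluded by (a). This is the only place the statement requires more than a transcription of Koh--Lee.
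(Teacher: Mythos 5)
Your setup matches the paper's: both form the total complex $T = \F\otimes_R N$, both exploit the minimality of $\F$ to see that $d^\F\otimes 1$ annihilates any tensor with a socle element, and both use the exactness $\ker(d^T_i)=\im(d^T_{i+1})$ coming from $\Tor^R_i(M,N)=0$. Your observations about $s(N)<\infty$ and about the socle subcomplex $S$ being genuinely a subcomplex are correct (the paper does not even spell out the subcomplex point). The difference in direct vs.\ contradiction framing is cosmetic.

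Where the two arguments diverge is exactly the step you flag as ``the delicate point,'' and that is where your proposal has a genuine gap. You try to route the socle witness through the single component $\F_i\otimes N_0$ (``the slot tied to $d_{i+1}$''), recognize that the witness may sit in some other internal degree $j$, and then propose an induction on the amplitude of $N$ using short exact sequences and the hyper-$\Tor$ spectral sequence to push the witness down to degree $0$. You never carry this out, and you explicitly say so (``this is the only place the statement requires more than a transcription of Koh--Lee''). That plan is also fraught: the socle of a complex and the invariant $s(\cdot)$ do not pass cleanly through short exact sequences of complexes, and the vanishing $\Tor^R_i(M,N)=0$ does not descend to the pieces $N'$ and $N''$ without extra hypotheses. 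So the argument as written is incomplete, and it is not clear the proposed repair converges.

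The paper's proof avoids all of this. It does not single out $j=0$ at all. Instead it treats every internal degree $j$ with $N_j\ne0$ simultaneously: condition (a) is used precisely to guarantee $\F_{i-j}\ne0$, so that the $j$-th summand $\F_{i-j}\otimes N_j\cong N_j^{r_{i-j}}$ of $T_i$ is nonzero, and then the chain
\[
\soc(N_j)^{r_{i-j}}\subseteq\ker(d^T_i)=\im(d^T_{i+1})\subseteq \I^1_{i+1}(\F)N_j^{r_{i-j}}+\im(d^N_{j+1})^{r_{i-j}}
\]
is invoked for \emph{every} $j$. Summing over $j$ gives $\soc(N)\subseteq\I^1_{i+1}(\F)N+\im(d^N)$, whence the conclusion by the definition of $s(N)$. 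No induction, no spectral sequence, no reduction to the module case. Your worry about which ``slot'' detects the witness dissolves because the paper applies the boundary argument in every slot at once. This is the missing idea: you should be running the Koh--Lee argument componentwise over all $j$, not trying to funnel the witness into degree $0$.

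One small caveat worth noting: the paper's displayed chain implicitly uses that each $\soc(N_j)^{r_{i-j}}$ lies in $\ker(d^T_i)$, which requires $d^N$ to vanish on socle elements; this is automatic when $N$ is a module (the case used downstream, in Proposition~\ref{complexes_koh_lee}) but is not immediate for an arbitrary minimal bounded complex. Your ``socle subcomplex'' observation is more honest on this point, since you only assert $Z_i(S)\subseteq B_i(T)$. But that extra care does not by itself close the gap described above.
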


\begin{remark}
\label{rem:socle}
\text{ }
\begin{enumerate}
\item If $\soc(M)\subseteq \im(d^M)$, then $s(M)=\infty$. If $M \in \C^b(\mod \- R)$, and $\soc(M)\nsubseteq \im(d^M)$, then $s(M) < \infty$. In particular, the complex $N$ in the statement of Proposition~\ref{koh_lee_prop} satisfies $s(N) < \infty$.
\item When the complex $N$ in Proposition~\ref{koh_lee_prop} satisfies $\inf(N) \ge -1$, the assumption $i \le \sup(F) + \inf(N)$ is superfluous, as $\I^1_{i+1}(F) = R$ for all $i \ge \sup(F)$.
\end{enumerate}
\end{remark}

\begin{proof}[Proof of Proposition~\ref{koh_lee_prop}]
    Set $G \coloneqq\F\otimes_R N$.  
    Since we have $\Tor^R_i(M,N)=0$, the sequence 
    $
    G_{i-1} \xla{} G_i \xla{} G_{i+1}
    $ is exact in the middle. Suppose $j$ is an integer such that $N_j \ne 0$. The inequalities
$
\inf(N) \le j \le \sup(N)
$
imply
$
i - \sup(N) \le i - j \le i - \inf(N).
$
Applying our assumption (a), we have
$
\inf(F) \le i - j \le \sup(F),
$
i.e. $F_{i - j} \ne 0$. Thus, setting $r_k = \on{rank} F_k$, we conclude that each term of the sum
$$
G_i = \bigoplus_{j = \inf(N)}^{\sup(N)}  N_j^{r_{i - j}}
$$
is nonzero. Since $\F$ and $N$ are minimal, we have 
$$
\soc(N_j)^{r_{i-j}}\subseteq \ker(d^G_i)=\im(d^G_{i+1}) \subseteq \I^1_{i+1}(\F)N_j^{r_{i-j}}+\im(d^N_{j+1})^{r_{i-j}}
$$
for all $\inf(N) \le j \le \sup(N)$. It follows that $\soc(N) \subseteq \I^1_{i+1}(F)N + \im(d^N)$.
By the definition of the integer $s(N)$, we therefore have $\I^1_{i+1}(\F)\not\subseteq \m^{s(N)}$.
\end{proof}

\begin{prop}[cf. \cite{KOH1998671} Theorem 1.7(i)]\label{complexes_koh_lee}
    Let $(R,\m)$ be a  local ring. There exists an integer $k>0$ such that, given any $M\in \C^{b}(\mod \- R)$ with minimal free resolution $\F\xrightarrow{\simeq}M$, we have  $\I^1_{i+1}(\F)\not\subseteq \m^k$ for all $i>\sup(M)+\depth(R)$. 
\end{prop}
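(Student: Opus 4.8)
The plan is to reduce Proposition~\ref{complexes_koh_lee} to Proposition~\ref{koh_lee_prop} by exhibiting, for a suitable uniform choice of test complex $N$, a degree $i$ in the allowed range in which $\Tor^R_i(M,N)$ vanishes. First I would choose $N$ to be the Koszul complex $K$ on a minimal generating set of $\m$; this is a minimal bounded complex of free modules with $\inf(K)=0$, $\sup(K)=\depth(R) \le \dim R$... actually we want $\sup(K) = $ (number of generators of $\m$), so I would instead take $N$ to be a high syzygy of $k$ or, following Koh-Lee, a fixed complex built from $k$ whose homology is $k$ and for which $\soc(N)\not\subseteq\im(d^N)$; concretely one can take $N$ to be the truncation of a minimal free resolution of $k$, shifted so that it is a bounded complex with $H(N)$ of finite length and socle not in the image of its differential. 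The key point is that such an $N$ can be chosen once and for all, depending only on $R$, with $s(N)=:k$ finite by Remark~\ref{rem:socle}(1).

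Next I would produce the vanishing of $\Tor$. Since $N$ has finite-length homology and $M \in \C^b(\mod\-R)$, the complex $M \otimes^{\mathbf{L}}_R N$ has homology supported at $\m$, hence finite length in each degree; but more importantly, by standard homological algebra $\Tor^R_i(M,N)$ vanishes for $i$ outside a window determined by $\sup(M)$, $\inf(N)$, $\sup(N)$ together with $\depth$ considerations. Precisely, I would invoke the graded/local version of the Auslander--Buchsbaum formula or a spectral-sequence/depth argument to show that for $i > \sup(M) + \depth(R)$ one has $\Tor^R_i(M,N)=0$: intuitively, once $i$ exceeds the projective dimension of $M$ shifted by the "depth defect" of $N$, no higher Tor survives, and $\depth R$ is the correct uniform bound because $N$ is built from $k$. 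Then for each such $i$ I must also check the range condition (a) of Proposition~\ref{koh_lee_prop}, $\inf(F) + \sup(N) \le i \le \sup(F) + \inf(N)$; here I would arrange $\inf(N) \ge -1$ (or $\inf(N)=0$ with a shift argument), so by Remark~\ref{rem:socle}(2) the upper inequality is automatic once we are past $\sup(F)$, i.e. the only real constraint is a lower bound of the form $i \ge \inf(F) + \sup(N)$, which is subsumed by $i > \sup(M) + \depth(R)$ after bounding $\sup(N)$ and $\inf(F)$ in terms of $\sup(M)$ and invariants of $R$. Applying Proposition~\ref{koh_lee_prop} then yields $\I^1_{i+1}(\F) \not\subseteq \m^{s(N)} = \m^k$ for all $i > \sup(M) + \depth(R)$, which is exactly the claim.

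The main obstacle I anticipate is the bookkeeping needed to make the single test complex $N$ simultaneously satisfy (i) $\soc(N) \not\subseteq \im(d^N)$, so that $s(N) < \infty$ and Proposition~\ref{koh_lee_prop} applies, and (ii) produce $\Tor$-vanishing in a range whose lower endpoint is controlled uniformly by $\sup(M) + \depth(R)$ rather than by $\sup(M)$ plus some quantity that grows with $M$. This is precisely the delicate interplay managed in \cite[Theorem 1.7(i)]{KOH1998671}: one must check that the obvious candidate for $N$ has its "width" $\sup(N) - \inf(N)$ and its $\Tor$-vanishing threshold both bounded by invariants of $R$ alone, and that the shift needed to land in range does not destroy minimality or the socle condition. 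Everything else — the exactness of $G = \F \otimes_R N$ in the relevant degree, the containment $\soc(N) \subseteq \I^1_{i+1}(F)N + \im(d^N)$, and the conclusion via $s(N)$ — is handled verbatim by Proposition~\ref{koh_lee_prop}, so the proof should be a short deduction once the test complex is pinned down.
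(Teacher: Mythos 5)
Your overall strategy---reduce to Proposition~\ref{koh_lee_prop} by choosing a fixed test complex $N$, depending only on $R$, with $\pd_R N \le \depth(R)$ so that $\Tor_i^R(M,N)=0$ for $i>\sup(M)+\depth(R)$---is exactly the right one, and matches the paper. But your proposed choice of $N$ does not work, and fixing it is the whole content of the proof. You suggest taking $N$ to be a bounded (shifted/truncated) piece of a minimal free resolution of $k$. Any such $N$ is a complex of \emph{free} $R$-modules (or, for a smart truncation, free modules plus a syzygy submodule of a free module at the top). Since $\soc$ of a complex is taken termwise, $\soc(R^n) = \soc(R)^n$, and this is \emph{zero} whenever $\depth(R) > 0$. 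Hence $\soc(N) = 0 \subseteq \im(d^N)$, so $s(N) = \infty$ and Proposition~\ref{koh_lee_prop} yields only the vacuous statement $\I^1_{i+1}(F) \not\subseteq \m^{\infty}$. The same defect kills your earlier Koszul-complex idea (together with the separate problem you already noticed, that the Koszul $\Tor$-vanishing threshold is governed by the embedding dimension, not $\depth(R)$). The obstacle you flag as ``bookkeeping'' on $\sup(N)-\inf(N)$ is actually a secondary issue; the primary one is that you need $N$ to have nonzero socle, which forces $N$ to have a term of depth zero.

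The paper's choice resolves this cleanly: take $N = R/(\underline{x})$ for a maximal regular sequence $\underline{x}$ (and $N = R$ if $\depth(R)=0$), viewed as a complex concentrated in degree $0$. Then $\pd_R(N)=\depth(R)$ gives the $\Tor$-vanishing for $i > \sup(M)+\depth(R)$, while $\depth_R(N)=0$ forces $\soc(N)\ne 0 = \im(d^N)$, so $s(N)<\infty$. Since $\inf(N)=\sup(N)=0$, condition (a) of Proposition~\ref{koh_lee_prop} reduces to $\inf(F)\le i$, which holds because $\inf(F)\le\sup(M)<i$, with the upper inequality handled by Remark~\ref{rem:socle}(2). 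You should replace your candidate $N$ with this one; the rest of your outline then goes through as the paper's proof.
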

\begin{proof}
    Let $(\underline{x})\subseteq R$ be a maximal regular sequence, and set $N\coloneqq R/(\underline{x})$; if $\depth{R}=0$, take $N=R$. 
    Fix $i > \sup(M) + \depth(R)$. Since $\pd_R(N)=\depth(R)$, we have $\Tor^R_i(M,N)=0$. Certainly $\inf(F) \le \sup(M) < i$; it therefore follows from Proposition~\ref{koh_lee_prop} and Remark~\ref{rem:socle}(2) that $\I^1_{i+1}(\F)\not\subseteq \m^k$ for $k=s(N)$. 
\end{proof}

\begin{rem}\label{koh-lee_rem}
The conclusion of \Cref{complexes_koh_lee} cannot hold in general for a smaller homological index. Indeed, consider Koszul complexes on maximal regular sequences of elements contained in arbitrarily high powers of the maximal ideal. 

\end{rem}

\subsection{Constraints on stable ideals of minors}
\begin{thm}
\label{thm:nonzero}
    Let $(R,\m)$ be a  local ring. If $M$ is a bounded complex of finitely generated $R$-modules, then $\istab(M,1)\neq 0$.
\end{thm}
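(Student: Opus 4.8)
The idea is to combine the Koh--Lee-type estimate of Proposition~\ref{complexes_koh_lee} with a Chevalley-type fact about descending chains of ideals. First dispose of the easy case: if $M$ is perfect, write $\F$ for its minimal free resolution and $s=\sup(\F)$; the differential $d_s$ is nonzero by minimality, its entries lie in $\itot(\F_{\geq n},1)$ for every $n<s$, and $\itot(\F_{\geq n},1)=R$ for $n\geq s$, so $0\ne\I^1(d_s)\subseteq\istab(M,1)$. Hence we may assume $M$ is not perfect, so that $\F$ is unbounded.

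Now apply Proposition~\ref{complexes_koh_lee} to the bounded complex $M$: there is an integer $k\geq 1$, depending only on $R$, with $\I^1_j(\F)\not\subseteq\m^k$ for all $j\gg 0$. For every $n$ the ideal $J_n\coloneqq\itot(\F_{\geq n},1)$ contains $\I^1_j(\F)$ for all sufficiently large $j>n$, hence $J_n\not\subseteq\m^k$ for every $n$. The $J_n$ form a descending chain of ideals with $\istab(M,1)=\bigcap_n J_n$, so the theorem reduces to the following purely commutative-algebraic claim: a descending chain of ideals $J_1\supseteq J_2\supseteq\cdots$ in a Noetherian local ring $(R,\m)$, none of which is contained in $\m^k$ for some fixed $k$, satisfies $\bigcap_n J_n\ne 0$.

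To prove this claim I would pass to the $\m$-adic completion $\widehat R$, which is Noetherian, complete, and faithfully flat over $R$; note $J_n\widehat R\not\subseteq\widehat\m^{\,k}$, since $J_n\widehat R\cap R=J_n\not\subseteq\m^k=\widehat\m^{\,k}\cap R$. Over the complete ring $\widehat R$, Chevalley's theorem says that a descending chain of ideals with zero intersection is cofinal with the $\m$-adic filtration; since $\{J_n\widehat R\}$ stays outside $\widehat\m^{\,k}$, it follows that $\bigcap_n(J_n\widehat R)\ne 0$. One then descends: because minimal free resolutions commute with completion (so that $\I^1_j(\F\otimes_R\widehat R)=\I^1_j(\F)\widehat R$ and $J_n\widehat R=\itot\big((\F\otimes_R\widehat R)_{\geq n},1\big)$), one wants the identity $\bigcap_n(J_n\widehat R)=(\bigcap_n J_n)\widehat R$, whence faithful flatness of $\widehat R$ gives $\bigcap_n J_n\ne 0$.

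The hard part is exactly this last step: infinite decreasing intersections need not commute with flat base change, so one must verify that for each $i\geq k$ the descending chain $\{J_n+\m^i\}_n$ in the Artinian ring $R/\m^i$ stabilizes to $(\bigcap_n J_n)+\m^i$ and not to something strictly larger. Granting that, the nonzero element produced over $\widehat R$ can be realized over $R$: assembling the stable values $A_i=\bigcap_n(J_n+\m^i)$ — which satisfy $\m^i\subseteq A_i$, $A_i\not\subseteq\m^i$, and $A_{i+1}+\m^i=A_i$ — into a compatible system yields a nonzero element of $\varprojlim_i A_i/\m^i$ whose approximants lie in the $J_n$, and Krull's intersection theorem ($\bigcap_i(J_n+\m^i)=J_n$) places it inside every $J_n$. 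In other words, the whole argument is an $R$-level refinement of Chevalley's theorem adapted to a chain that does not escape $\m^k$, and that bookkeeping is where I expect the genuine difficulty to be; everything before it is formal.
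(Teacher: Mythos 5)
Your proposal tracks the paper's own proof quite closely: dispose of the perfect case, invoke the Koh--Lee estimate (Proposition~\ref{complexes_koh_lee}) to produce a uniform $k$ with $J_j\not\subseteq\m^k$ for $j\gg 0$, pass to the $\m$-adic completion $\widehat R$, apply Chevalley's theorem there, and descend. The paper's argument has exactly these ingredients; it first treats the complete case by contradiction (Chevalley plus Koh--Lee) and then reduces the general case to the complete one. So the route is the same.

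The one substantive difference is your explicit worry about the descent, and I think your caution is warranted --- this is the weak point of both arguments. The paper disposes of the descent with a one-line assertion that ``completion commutes with intersections,'' which, taken at face value for an arbitrary descending chain of ideals in a Noetherian local ring, is false. For instance, take $R=\mathbb{Q}[x,y]_{(x,y)}$, $\phi_n=x-\sum_{i=1}^{n-1}y^i/i!$, and $J_n=(\phi_n)+\m^n$: here $\bigcap_n J_n=0$ in $R$ (any common element would, in $\widehat R=\mathbb{Q}[[x,y]]$, lie in the principal ideal of $x-(e^y-1)$, and the transcendence of $e^y$ forces it to be zero), yet $\bigcap_n(J_n\widehat R)=(x-e^y+1)\widehat R\neq 0$. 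In this same example the stable values you want to control are $A_i=\bigcap_n(J_n+\m^i)=(\phi_i)+\m^i$, which is \emph{strictly} larger than $(\bigcap_n J_n)+\m^i=\m^i$; so the property you say one ``must verify'' simply fails for a general descending chain, and moreover your concluding ``assembling'' step only manufactures an element of $\widehat R$, not of $R$. What is actually needed is a structural feature of the particular ideals $J_j=\itot(F_{\geq j},1)$ coming from a minimal free resolution that forces $A_i=(\bigcap_n J_n)+\m^i$; neither your proposal nor the paper's written proof identifies one. In short, you have correctly located the genuine gap, but you have not closed it, and the paper glosses over the same point.
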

\begin{proof}
    Let $\F$ be the minimal free resolution of $M$, and set $J_j\coloneqq\itot(\F_{\geq j},1)$. Recall that $\istab(M,1) \coloneqq \bigcap_{j = \inf(F)}^{\infty} J_j$. We first assume that $R$ is $\m$-adically complete. Suppose $\istab(M,1) = 0$. By \cite[Lemma 7]{Chevalley}, for every integer $t\geq 1$, there exists an integer $n(t)$ such that $J_{n(t)}\subseteq \m^t$. By \Cref{complexes_koh_lee}, there exists an integer $k$ such that, for all $i>\sup(M)+\depth(R)+1$, we have $\I^1_i(\F)\not\subseteq \m^k$. In particular, $J_j \nsubseteq \m^k$ for $j > \sup(M)+\depth(R)+1$. Choosing $j > \max\{\sup(M)+\depth(A)+1,n(k)\}$ therefore yields $J_i \subseteq \m^k$ and $J_i \nsubseteq \m^k$, a contradiction. 
    
    As for the general case: we have $\widehat{\istab(M,1)} = \bigcap_{j = \inf(F)}^{\infty} \widehat{J_j} = \istab(\widehat{M}, 1)$, where the first equality follows since completion commutes with intersections, and the second is due to the flatness of completion. Since $\istab(\widehat{M}, 1) \ne 0$, we must have ${\istab(M,1)} \ne 0$.
\end{proof}

Let $R$ be a local ring. For all $n \ge 0$, we let
$$
\ca^n(R) = \bigcap_{M,N\in \mod(R)}\ann_R(\Ext^{\ge n}_R(M,N))
$$
Following Iyengar-Takahashi \cite[Definition 2.1]{iyengar_takahashi_ca}, the \emph{cohomology annihilator of $R$} is defined to be  $\ca(R) \coloneqq \bigcup_{n \ge 0} \ca^n(R)$. 
By Noetherianity, $\ca(R) = \ca^s(R)$ for some $s \gg 0$. 

Before stating the next result, we recall that $\C^{+, f}(\mod \- R)$ denotes the category of bounded below complexes of finitely generated $R$-modules with bounded homology.

\begin{prop}\label{istab_modR}
    Let $(R,\m,k)$ be a local ring. 
    \begin{enumerate}
    \item Given $M\in \C^{+, f}(\mod \- R)$, we have $\ca(R)\subseteq \istab(M,1)$. Thus, $\ca(R)\subseteq \istab(\C^{+, f}(\mod \- R),1)= \istab(\mod(R),1)$. In particular, if $\ca(R)\neq 0$, then $\istab(\C^{+, f}(\mod \- R),1)\neq 0$.
    \item If $R$ is not regular, then $\istab(k,1)=\m$, and $\istab(L,1)$ is $\m$-primary for any $R$-module $L$ of finite length. 
    \end{enumerate}
\end{prop}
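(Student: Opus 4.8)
The plan is to deduce both parts from one mechanism. Suppose $(\F,d)$ is a minimal free resolution and $a\in R$ satisfies an $R$-linear identity
\[
a\cdot\id_{\F_\ell}=\alpha\circ d_\ell+d_{\ell+1}\circ\beta
\]
for some $\alpha\colon\F_{\ell-1}\to\F_\ell$, $\beta\colon\F_\ell\to\F_{\ell+1}$, with $\F_\ell\neq 0$. Choosing bases and reading off a diagonal entry of this matrix identity exhibits $a$ as a sum of terms, each a product involving an entry of $d_\ell$ or of $d_{\ell+1}$; hence $a\in\I^1_\ell(\F)+\I^1_{\ell+1}(\F)$. So if such an identity can be produced for arbitrarily large $\ell$, then $a\in\sum_{i\geq n}\I^1_i(\F)$ for every $n$, i.e. $a$ lies in the stable ideal of $1\times 1$ minors of the complex resolved by $\F$. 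In each part it therefore suffices to exhibit, for $\ell\gg 0$, an ``eventual null-homotopy'' of multiplication by the relevant scalar.

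\emph{Part (2).} Let $L$ be a finitely generated $R$-module with minimal free resolution $\F$, and let $x\in\ann_R(L)$. Multiplication by $x$ is a chain endomorphism of $\F$ lifting the zero map on $L$, hence is null-homotopic; the homotopy identities $x\cdot\id_{\F_\ell}=d_{\ell+1}h_\ell+h_{\ell-1}d_\ell$ have exactly the form above for every $\ell\geq 1$, so by the first paragraph $\ann_R(L)\subseteq\istab(L,1)$. Taking $L=k$ gives $\m\subseteq\istab(k,1)$, and the reverse inclusion holds because $\F$ is minimal; hence $\istab(k,1)=\m$. If instead $L$ has finite length, then $\ann_R(L)$ is $\m$-primary, so $\m^N\subseteq\ann_R(L)\subseteq\istab(L,1)$ for some $N$. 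Since $R$ is not regular, $L$ has infinite projective dimension (a local ring admitting a nonzero finite-length module of finite projective dimension is regular), so $\F$ is an infinite minimal complex; then $\I^1_i(\F)\subseteq\m$ for all $i$, whence $\istab(L,1)\subseteq\m$ and $\istab(L,1)$ is a proper $\m$-primary ideal.

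\emph{Part (1).} By Noetherianity $\ca(R)=\ca^s(R)$ for $s\gg 0$; fix $a\in\ca^s(R)$ and $M\in\C^{+,f}(\mod\text{-}R)$ with minimal free resolution $\F$. If $M$ is perfect then $\istab(M,1)=R$ (by the preceding Proposition, together with $\istab(M,1)\supseteq\istab(M,r)$ since $\I^1_i\supseteq\I^r_i$), so assume $\F$ is infinite. For $p\gg 0$ the module $N\coloneqq\syz_p(M)$ is resolved by the truncation $\F_{\geq p}$, and dimension shifting identifies $\Ext^{j}_R(N,-)$ with a higher $\Ext$ of the first module syzygy of $M$; taking $p$ large, this forces $a$ to annihilate $\Ext^{\geq 1}_R(N,-)$. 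Consequently, for every $j\geq 1$, multiplying the syzygy extension $0\to\syz_j(N)\to\F^N_{j-1}\to\syz_{j-1}(N)\to 0$ by $a$ yields a split sequence, i.e. a map $q\colon\F^N_{j-1}\to\syz_j(N)$ restricting to $a\cdot\id$ on $\syz_j(N)$. Lifting $q$ along the surjection $\F^N_j\twoheadrightarrow\syz_j(N)$ to a map $\tilde q$, and then lifting the discrepancy $\tilde q\circ d^N_j-a\cdot\id_{\F^N_j}$ along $d^N_{j+1}$ (both possible by projectivity of free modules), produces an identity $a\cdot\id_{\F^N_j}=\alpha\circ d^N_j+d^N_{j+1}\circ\beta$. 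Since $\F^N_j=\F_{p+j}$, this identity holds in all degrees $\ell\geq p+1$ of $\F$, so $a\in\istab(M,1)$ by the first paragraph. The rest is formal: $\ca(R)\subseteq\bigcap_{M\in\C^{+,f}(\mod\text{-}R)}\istab(M,1)\subseteq\istab(\C^{+,f}(\mod\text{-}R),1)$, the equality $\istab(\C^{+,f}(\mod\text{-}R),1)=\istab(\mod(R),1)$ follows by reducing any complex to one of its high syzygy modules, and the last clause is immediate.

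\emph{On the main obstacle.} I do not expect a serious obstacle: the proof assembles standard tools (null-homotopy of scalar multiplications, dimension shifting, lifting against projectives, and reading minors off matrix identities as in Buchsbaum--Eisenbud's work). The care needed lies in the index bookkeeping for the dimension shift and the reduction from complexes to modules in part (1), and, in part (2), in correctly invoking that a non-regular local ring has no nonzero finite-length module of finite projective dimension. Matching the definition of $\istab$ (an intersection of total ideals of minors of smart truncations) with the concrete description $\bigcap_n\sum_{i\geq n}\I^1_i(\F)$ used throughout is routine.
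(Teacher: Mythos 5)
Your proof is correct, and it takes a genuinely different route from the paper's. The paper's argument for part~(1) sets $I := \itot(\F_{\geq i},1)$, observes that $\Tor^R_r(\Syz_r(M), R/I)$ is a nonzero free $R/I$-module (because all entries of the truncated differential lie in $I$), and then invokes a lemma of Wang relating annihilators of $\Ext$ and $\Tor$ to conclude $\ca(R)\subseteq \ann_R\Tor^R_r(\Syz_r(M), R/I)=I$; part~(2) is then handled by noting $\ann(L)\subseteq\ca(L)$ and running the same argument with $\ca(L)$ in place of $\ca(R)$. You instead manufacture, for $\ell\gg 0$, an explicit identity $a\cdot\id_{\F_\ell}=\alpha d_\ell+d_{\ell+1}\beta$ -- in part~(2) via the literal null-homotopy of a scalar lifting zero, and in part~(1) by unwinding the vanishing $a\cdot[\Syz_j(N)\hookrightarrow\F^N_{j-1}\twoheadrightarrow\Syz_{j-1}(N)]=0$ in $\Ext^1$ into a retraction and then lifting twice against free modules -- and read membership in $\I^1_\ell(\F)+\I^1_{\ell+1}(\F)$ off a diagonal entry. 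The underlying input is the same (that $\ca(R)$ annihilates $\Ext^{\geq s}$ and this persists after a dimension shift), but your route avoids the $\Tor$ detour and the citation of Wang's $\Ext$/$\Tor$ lemma, making the argument self-contained at the cost of a longer chain of lifts; it also yields a cleaner proof of part~(2), where the paper replays the part~(1) machinery and you get it immediately from a single null-homotopy. The only small hygiene points: you should note that $L$ is implicitly nonzero in part~(2), and that the reduction $\istab(\C^{+,f},1)=\istab(\mod(R),1)$ uses that $\itot(\F_{\geq n},1)\subseteq\itot(M,1)$ for $n\gg 0$, with $\F_{\geq n}$ resolving a module once one is past the range of homology -- but these are exactly the kind of routine checks you flagged, and they go through.
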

\begin{proof}
    Let $\F$ be the minimal free resolution of $M$. Given $i\geq 0$, it suffices to show $\ca(R)\subseteq I \coloneqq \itot(\F_{\geq i},1)$; recall that $F_{\ge i}$ denotes the smart truncation of $F$ at $i$.  Choose $s \ge 0$ such that $\ca(R) = \bigcap_{A,B\in \mod(R)}\ann_R(\Ext^{\ge s}_R(A,B))$. Choose also $t \ge 0$ such that $H_{\ge t}(M) = 0$, so that $F_{\ge j}$ is the minimal free resolution of a module $N_j$ for all $j \ge t$. Let $r = \max\{i, s+t\}$. Observe that $\Tor^R_{r}(M,R/I) = \Tor^R_r(N_r, R/I)$ is a free $R/I$-module, and so $\ann(\Tor^R_{r}(N_r,R/I))= I$. 
    We therefore have 
    $$
\ca(R)\subseteq \ann_R(\Ext^r_R(N_r,R/I)) = \ann_R(\Tor^R_r(N_r,R/I)) = I,
$$
    where the first equality follows by \cite[Corollary 1.6(3)]{Wang1994OnTF} (and taking syzygies). This proves (1). As for (2): setting $\ca(L)\coloneqq\bigcup _{j\geq 1}(\bigcap_{N\in \mod(R)}\ann_R(\Ext^j_R(L,N))) $, we clearly have that $\ann(L)\subseteq \ca(L)$. Arguing as in the proof of (1), we conclude $\ca(L)\subseteq \istab(L,1)$. 
\end{proof}

 Given a local ring $R$, an object $M \in \C^{+, f}(\mod \- R)$ with minimal free resolution $F$, and $r \ge 1$; we say $\istab(M,r)$ is \emph{finitely determined} if it is equal to $\itot(F_{\ge k}, r)$ for some $k \gg 0$. Similarly, we say $\istab(\C^{+, f}(\mod \- R),r)$ is \emph{finitely determined} if it is equal to the intersection of finitely many ideals of the form $\istab(M_i,r)$, where each $\istab(M_i,r)$ is finitely determined. 
 The following is immediate from \Cref{istab_modR}(1):

   \begin{cor}\label{ca_R_m_primary}
       Let $(R,\m)$ be a local ring such that $\ca(R)$ is $\m$-primary. For any $M\in\C^{+, f}(\mod \- R)$, $\istab(M,1)$ is finitely determined. Moreover, $\istab(\C^{+, f}(\mod \- R),1)$ is finitely determined.
       \end{cor}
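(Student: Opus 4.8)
The plan is to derive both assertions from the descending chain condition in the Artinian quotient $R/\ca(R)$, using that $\ca(R)$ is contained in all of the ideals in sight by \Cref{istab_modR}(1).

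First I would fix $M \in \C^{+, f}(\mod \- R)$ with minimal free resolution $F$, and for $n \ge \inf(F)$ set $J_n \coloneqq \itot(F_{\ge n}, 1) = \sum_{m \ge n}\I^1_m(F)$. These form a descending chain $J_{\inf(F)} \supseteq J_{\inf(F)+1} \supseteq \cdots$ with $\istab(M,1) = \bigcap_n J_n$. The key input is that the proof of \Cref{istab_modR}(1) in fact establishes the containment $\ca(R) \subseteq \itot(F_{\ge i},1) = J_i$ for \emph{every} $i$, not merely $\ca(R) \subseteq \istab(M,1)$. Since $\ca(R)$ is $\m$-primary by hypothesis, it follows that each $J_n$ is $\m$-primary and that $R/\ca(R)$ is Artinian.

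Next I would pass to $R/\ca(R)$: the images $\overline{J_n}$ form a descending chain of ideals in an Artinian ring, hence stabilize, say $\overline{J_n} = \overline{J_k}$ for all $n \ge k$. Because $\ca(R) \subseteq J_n$ for every $n$, stabilization modulo $\ca(R)$ lifts to $J_n = J_k$ for all $n \ge k$, so $\istab(M,1) = J_k = \itot(F_{\ge k},1)$; that is, $\istab(M,1)$ is finitely determined. For the ``moreover'' part I would use the identity $\istab(\C^{+, f}(\mod \- R),1) = \bigcap_{M \in \C^{+, f}(\mod \- R)} \istab(M,1)$ (the $\C^{+,f}$-analogue of part (3) of the proposition following \Cref{def:stable}). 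Each $\istab(M,1)$ contains $\ca(R)$ by \Cref{istab_modR}(1), so this is an intersection of ideals of the Artinian ring $R/\ca(R)$; the standard DCC argument (take a minimal element of the family of finite subintersections, and note minimality forces it below every $\istab(M,1)$) shows that this intersection equals $\istab(M_1,1) \cap \cdots \cap \istab(M_m,1)$ for finitely many $M_j$. Each $\istab(M_j,1)$ is finitely determined by the first part, so by definition $\istab(\C^{+, f}(\mod \- R),1)$ is finitely determined.

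The argument is essentially bookkeeping, as the ``immediate'' in the statement suggests; the only points needing care are confirming that the proof of \Cref{istab_modR}(1) really yields $\ca(R) \subseteq \itot(F_{\ge i},1)$ uniformly in $i$ (it does, since there $I = \itot(F_{\ge i},1)$ for an arbitrary $i \ge 0$), and justifying that a possibly infinite intersection of ideals in the Artinian ring $R/\ca(R)$ collapses to a finite subintersection.
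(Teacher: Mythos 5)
Your proof is correct and fills in exactly the reasoning the paper leaves implicit (it says the corollary is ``immediate from \Cref{istab_modR}(1)'' and gives no further argument). Passing to the Artinian quotient $R/\ca(R)$, observing that the proof of \Cref{istab_modR}(1) gives $\ca(R)\subseteq\itot(F_{\ge i},1)$ for every $i$, and invoking the descending chain condition both for the stabilization of the chain $J_n$ and for collapsing $\bigcap_M\istab(M,1)$ to a finite subintersection is the intended argument; your care in noting that the second part needs the $\C^{+,f}$-analogue of $\istab(\C^+(\mod\-R),r)=\bigcap_M\istab(M,r)$ is also appropriate.
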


We also have:
       \begin{cor}
       \label{findet}
Suppose $R$ is a localization of a finitely generated algebra over a field or an equicharacteristic excellent local ring. If $R$ is an isolated singularity, then for any $M\in\C^{+, f}(\mod \- R)$, $\istab(M,1)$ is $\m$-primary and finitely determined. Moreover, $\istab(\C^{+, f}(\mod \- R),1)$ is $\m$-primary and finitely determined.
       \end{cor}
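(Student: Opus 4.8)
The plan is to reduce both assertions to the single fact that, under the stated hypotheses, the cohomology annihilator $\ca(R)$ is $\m$-primary; granting that, the corollary follows by combining Corollary~\ref{ca_R_m_primary} with Proposition~\ref{istab_modR}(1) and~(2).

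First I would invoke the theory of Iyengar--Takahashi. When $R$ is a localization of a finitely generated algebra over a field, or an equicharacteristic excellent local ring, the ideal $\ca(R)$ cuts out the singular locus of $R$: one has $\V(\ca(R)) = \sing(R)$, equivalently $\sqrt{\ca(R)}$ is the defining ideal of $\sing(R)$. This is the content of \cite{iyengar_takahashi_ca}, obtained via the comparison of $\ca(R)$ with the Jacobian ideal $\Jac(R)$, which cuts out $\sing(R)$ for such rings (see \cite{IYENGAR2021280}); it also uses that $\sing(R)$ is closed, which holds since these rings are excellent. Now if $R$ is an isolated singularity, then $\sing(R) \subseteq \{\m\}$, hence $\V(\ca(R)) \subseteq \{\m\}$, i.e.\ $\ca(R)$ is $\m$-primary (the remaining case $\sing(R) = \emptyset$ is the trivial one: then $R$ is regular, every object of $\C^{+,f}(\mod \- R)$ is perfect, and $\istab(-,1) = R$). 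Assume from here on that $R$ is not regular, so $\ca(R)$ is genuinely $\m$-primary.

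Next I would assemble the two conclusions. Since $\ca(R)$ is $\m$-primary, Corollary~\ref{ca_R_m_primary} directly yields that $\istab(M,1)$ is finitely determined for every $M \in \C^{+,f}(\mod \- R)$ and that $\istab(\C^{+,f}(\mod \- R),1)$ is finitely determined. For the primality statement, Proposition~\ref{istab_modR}(1) gives $\ca(R) \subseteq \istab(M,1)$, so $\m = \sqrt{\ca(R)} \subseteq \sqrt{\istab(M,1)}$, forcing $\sqrt{\istab(M,1)} = \m$ when $M$ is not perfect (and $\istab(M,1) = R$ when it is). Applying the same inclusion after intersecting over all $M$, together with the chain $\ca(R) \subseteq \istab(\C^{+,f}(\mod \- R),1) = \istab(\mod(R),1) \subseteq \istab(k,1) = \m$ from Proposition~\ref{istab_modR}(1) and~(2) (the last equality valid since $R$ is not regular), shows that $\istab(\C^{+,f}(\mod \- R),1)$ is $\m$-primary.

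The only step carrying real content is the first one: pinning down the statement in \cite{iyengar_takahashi_ca} that $\V(\ca(R))$ is the singular locus under precisely the hypotheses appearing in the corollary. That result is genuinely deep---it relies on Noether normalization or the existence of dualizing complexes, and on the Jacobian criterion for the singular locus of such rings---but it is available off the shelf, so everything after it is formal bookkeeping with the earlier propositions. A minor point to state carefully is the degenerate behavior at perfect complexes, where $\istab(M,1) = R$ rather than a proper $\m$-primary ideal.
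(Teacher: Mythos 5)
Your proposal is correct and follows the same route as the paper's one-line proof: invoke Iyengar--Takahashi's theorems to identify $V(\ca(R))$ with $\sing(R)$, deduce that $\ca(R)$ is $\m$-primary from the isolated-singularity hypothesis, and then combine Proposition~\ref{istab_modR}(1) with Corollary~\ref{ca_R_m_primary}. Your additional care with the degenerate cases (regular $R$, perfect $M$, where $\istab(M,1)=R$) is a small improvement over the paper's terse statement, which elides them.
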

       \begin{proof}
           Apply \cite[Theorems 5.3 and 5.4]{iyengar_takahashi_ca}, \Cref{istab_modR}(1), and \Cref{ca_R_m_primary}.
       \end{proof}

It follows from Theorems~\ref{CI_periodicity} and~\ref{Golod_periodicity} below that, when $R$ is a complete intersection or Golod, $\istab(M,r)$ is finitely determined for any $r\geq 1$ and $M \in \C^{+, f}(\mod \- R)$. We ask:

\begin{question}
    Let $R$ be a Noetherian local ring and $M$ a finitely generated $R$-module. Is $\istab(M,r)$ finitely determined for any $r\geq 1$?
\end{question}

\begin{thm}
\label{thm:jacobian}
    Let $R$ be a local ring, $\sing(R)$ the singular locus of $R$, and $\jac(R)$ the Jacobian ideal of $R$; see \cite[\S 3]{IYENGAR2021280} for the definition of $\jac(R)$. We have:

    \begin{enumerate}
    \item $\jac(R)^t \subseteq \istab(\C^{+, f}(\mod \- R),1)$ for some $t \ge 1$.
   \item If $R$ is a localization of a finitely generated algebra over a field, or if $R$ is an equicharacteristic excellent local ring, then $V(\istab(\C^{+, f}(\mod \- R),1))\subseteq \sing(R)$. Thus, if $\sing(R) \ne \spec(R)$ (e.g. if $R$ is reduced), then $\istab(\C^{+, f}(\mod \- R),1)\neq 0$.
   \item Suppose $R$ is complete, Cohen-Macaulay and contains a field. We have $\mathfrak{R}^R\subseteq \istab(\C^{+, f}(\mod \- R),1)$, where $\mathfrak{R}^R$ is the Noether different of $R$; see \cite[Definition 5.5]{Wang1994OnTF}.
    \end{enumerate}

\end{thm}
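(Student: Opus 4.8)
The plan is to deduce all three parts from the single key inclusion $\ca(R) \subseteq \istab(\C^{+,f}(\mod\-R),1)$ established in Proposition~\ref{istab_modR}(1), combined with known comparisons between the cohomology annihilator $\ca(R)$ and the various differents/Jacobian ideals in the literature. The point is that $\ca(R)$ is the ``universal'' annihilator of stable Ext, so any ideal that is known to kill all sufficiently high Ext modules lands inside $\ca(R)$, hence inside every $\istab(M,1)$.

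For part~(1): I would invoke the comparison between the Jacobian ideal and the cohomology annihilator. By Iyengar--Takahashi (see \cite{iyengar_takahashi_ca}, and the refinement in \cite{IYENGAR2021280}), there is an integer $t\ge 1$ with $\jac(R)^t \subseteq \ca(R)$; in the equicharacteristic or finite-type setting this is a form of the classical result that the Jacobian ideal annihilates $\Ext^{\gg 0}$. Then Proposition~\ref{istab_modR}(1) gives $\jac(R)^t \subseteq \ca(R) \subseteq \istab(\C^{+,f}(\mod\-R),1)$, which is exactly the claim. For part~(2): one first notes $V(\jac(R)) = \sing(R)$ in the stated settings (this is the defining property of $\jac(R)$ in \cite{IYENGAR2021280}), so $V(\istab(\C^{+,f}(\mod\-R),1)) \subseteq V(\jac(R)^t) = V(\jac(R)) = \sing(R)$ by part~(1); the reducedness remark follows since a reduced ring is regular in codimension~$0$, so $\sing(R)$ is a proper closed subset, forcing $\istab(\C^{+,f}(\mod\-R),1) \neq 0$. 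For part~(3): when $R$ is complete, Cohen--Macaulay, and equicharacteristic, I would cite Wang's result \cite[Theorem 5.6]{Wang1994OnTF} (building on Noether's different and the results of \S5 there) that the Noether different $\mathfrak{R}^R$ is contained in $\ca(R)$ — indeed Wang shows $\mathfrak{R}^R$ annihilates $\Ext^{d+1}_R(-,-)$ appropriately — and then conclude $\mathfrak{R}^R \subseteq \ca(R) \subseteq \istab(\C^{+,f}(\mod\-R),1)$ again via Proposition~\ref{istab_modR}(1).

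The main obstacle I expect is part~(1): extracting a uniform exponent $t$, independent of the module, from the comparison between $\jac(R)$ and $\ca(R)$. The module-independence is already built into the definition of $\ca(R)$ as an intersection over all pairs $M,N$, so once $\jac(R)^t \subseteq \ca(R)$ for a single $t$, we are done — but one must be careful that the cited statement in \cite{IYENGAR2021280} genuinely produces such a $t$ under the hypotheses of part~(1) (equicharacteristic excellent, or essentially of finite type over a field) rather than only over a field directly. If the literature only gives the weaker statement $\jac(R) \subseteq \sqrt{\ca(R)}$, then since $\ca(R) = \ca^s(R)$ is a fixed ideal and $R$ is Noetherian, $\sqrt{\ca(R)}^t \subseteq \ca(R)$ for some $t$, which suffices; so this is really a matter of assembling the correct citation. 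Parts~(2) and~(3) are then essentially formal consequences.
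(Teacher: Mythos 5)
Your proposal is essentially the same as the paper's proof: the unifying observation that $\ca(R) \subseteq \istab(\C^{+,f}(\mod\text{-}R),1)$ from Proposition~\ref{istab_modR}(1) does all the work, and each part reduces to verifying that the ideal in question lands inside $\ca(R)$, by citation. Part~(1) is exactly the paper's argument (they cite \cite[Theorem 3.4]{IYENGAR2021280} for $\jac(R)^t \subseteq \ca(R)$), and your fallback remark via $\sqrt{\ca(R)}$ is unnecessary but harmless. Part~(3) is also the paper's argument in substance: the paper cites \cite[Corollary 5.13]{Wang1994OnTF}, which gives that $\mathfrak{R}^R$ annihilates $\Ext^1_R(M,-)$ for every MCM module $M$, hence $\mathfrak{R}^R \subseteq \ca(R)$; your reference to Theorem~5.6 and the phrasing ``annihilates $\Ext^{d+1}$ appropriately'' is pointing at the same content, though you should double-check that the precise statement you cite does yield $\mathfrak{R}^R \subseteq \ca(R)$ (not merely containment in $\sqrt{\ca(R)}$).

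The one place where your route genuinely differs is part~(2). The paper argues directly: $\ca(R) \subseteq \istab$, so $V(\istab) \subseteq V(\ca(R))$, and then invokes \cite[Theorems 5.3 and 5.4]{iyengar_takahashi_ca} to get $V(\ca(R)) = \sing(R)$ under the stated hypotheses. You instead pass through part~(1), giving $V(\istab) \subseteq V(\jac(R)^t) = V(\jac(R))$, and then assert $V(\jac(R)) = \sing(R)$. That equality is a theorem, not ``the defining property'' of $\jac(R)$ in \cite{IYENGAR2021280} --- their Jacobian ideal is defined by an explicit homological recipe and the comparison with the singular locus is proved, not postulated --- and in positive characteristic with imperfect residue field one needs to be careful that the cited result actually covers the ``equicharacteristic excellent'' case. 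The paper's route via $V(\ca(R)) = \sing(R)$ sidesteps this bookkeeping, so it is the safer path; but given the correct citation your version is also valid, and the two approaches are morally equivalent since $\jac(R)$ and $\ca(R)$ have the same radical under these hypotheses.
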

\begin{proof}
  By \cite[Theorem 3.4]{IYENGAR2021280}, some power of $\jac(R)$ is contained in $\ca(R)$. Part (1) therefore follows from \Cref{istab_modR}(1). For (2), \cite[Theorems 5.3 and 5.4]{iyengar_takahashi_ca} imply that $V(\ca(R))=\sing(R)$ under the given hypotheses. The claims therefore follow from \Cref{istab_modR}(1). For (3), by \cite[Corollary 5.13]{Wang1994OnTF}, $\mathfrak{R}^R$ annihilates the functor $\Ext^1_R(M, - )$ for any maximal Cohen-Macaulay module $M$. It follows that $\mathfrak{R}^R\subseteq \ca(R)$ and we conclude by \Cref{istab_modR}(1).
\end{proof}

\begin{rem}
    Under extra hypotheses, \cite[Theorem 5.2]{Wang1994OnTF} implies \Cref{thm:jacobian}(1).
\end{rem}

For $r > 1$, the ideals $\istab(M,r)$ can be 0. For example, if the Betti numbers of the minimal free resolution of $M \in \C^{+, f}(\mod \- R)$ are bounded, then $\itot(M,r)=\istab(M,r)=0$ for $r \gg 0$. Such is the case for any finitely generated module $M$ over a  hypersurface ring, although the choice of $r$ is dependent on $M$.  
Moreover, even if the Betti numbers of $M$ are unbounded, $\istab(M,r)$ could vanish for other reasons. For instance, if $R$ is an Artinian ring, then 
$\itot(M,r)=\istab(M,r)=0$ for $r>\ell\ell(R)$, the Loewy length of $R$. However, we have the following result on ideals of higher rank minors of the residue field:

\begin{thm}\label{residue_field_stable_ideals}
    Let $(R,\m,k)$ be a local ring and $K$ the Koszul complex on a minimal generating set of $\m$.
    \begin{enumerate}
    \item If $\dim_k H_1(K)\geq 2$, then $\istab(k,r)=\m^r$ for all $r\geq 1$. 
    \item If $R = S/I$, where $(S,\n)$ is a regular local ring and $I\subseteq \n^2$, then $\istab(k,r)=\m^r$ for all $r\geq 1$ unless $R$ is a hypersurface.
    \end{enumerate}
\end{thm}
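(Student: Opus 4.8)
The inclusion $\istab(k,r)\subseteq\m^r$ is free: in a minimal free resolution every differential has entries in $\m$, so every $r\times r$ minor lies in $\m^r$, whence $\itot(F_{\geq n},r)\subseteq\m^r$ for all $n$. The entire content is the reverse inclusion, i.e., $\m^r\subseteq\sum_{m\geq n}\I^r_m(F)$ for every $n$, and for this it suffices to exhibit, for each fixed $r$, arbitrarily large indices $m$ with $\m^r\subseteq\I^r_m(F)$. I would then deduce (2) from (1) as follows: since $I\subseteq\n^2$, the minimal generators of $\m$ lift to minimal generators of $\n$, so $H_1(K)\cong \operatorname{Tor}_1^S(R,k)\cong I/\n I$ and hence $\dim_k H_1(K)=\mu(I)$; under the hypothesis $I\subseteq\n^2$, the ring $R$ is a hypersurface (allowing the regular case, where $\istab(k,r)=R$) precisely when $\mu(I)\leq 1$, so ``$R$ is not a hypersurface'' is equivalent to $\dim_k H_1(K)\geq 2$, and (1) applies.

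For (1) I would compute $\istab(k,r)$ using the acyclic closure $F$ of $k$, which is its minimal free resolution (Gulliksen--Levin, Tate): $F=K\langle T_1,\dots,T_c\rangle\langle U_\bullet,\dots\rangle$, where $e_1,\dots,e_n$ are the exterior variables in homological degree $1$ corresponding to a minimal generating set $x_1,\dots,x_n$ of $\m$ (so $d e_i=x_i$), and $T_1,\dots,T_c$ are the divided-power variables in homological degree $2$ killing $H_1(K)$, with $d T_j=z_j$ for a $1$-cycle $z_j$ of $K$ representing a basis element of $H_1(K)$; by hypothesis $c=\dim_k H_1(K)\geq 2$. The key structural observation is that the sub-DG-algebra $A:=K\langle T_1,\dots,T_c\rangle$ (generated only by the $e_i$ and the $T_j$, with $R$-basis the monomials $e_I\,T_1^{(a_1)}\cdots T_c^{(a_c)}$) is a subcomplex of $F$ that is a direct summand as a graded free $R$-module; hence the matrix of each $d_m^A$ is a submatrix of $d_m^F$, so $\I^r_m(A)\subseteq\I^r_m(F)$ and it is enough to prove $\m^r\subseteq\I^r_m(A)$ for arbitrarily large $m$.

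I would then write down an explicit block-diagonal submatrix of the differential of $A$. Fix $j\geq r$ and work in homological degree $m=2j+1$. Writing $T^a:=T_1^{(a_1)}\cdots T_c^{(a_c)}$ for $a\in\mathbb Z_{\geq 0}^c$, take as columns the elements $e_l\,T^a$ with $1\leq l\leq n$ and $|a|=j$, and as rows the elements $T^b$ with $|b|=j$. Since $d(e_l T^a)=x_l\,T^a-e_l\cdot(\text{sum of terms of exterior degree }2)$, the only component of $d(e_l T^a)$ lying in the span of the rows $T^b$ (which have exterior degree $0$) is $\pm x_l T^a$; thus the chosen submatrix is block-diagonal over the $\binom{j+c-1}{c-1}\geq j+1\geq r$ multi-indices $a$ with $|a|=j$, each block being the $1\times n$ row $(\pm x_1,\dots,\pm x_n)$. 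Picking $r$ distinct multi-indices $a^{(1)},\dots,a^{(r)}$ with $|a^{(\alpha)}|=j$ and, for each, a column $e_{l_\alpha}T^{a^{(\alpha)}}$, the resulting $r\times r$ minor equals $\pm x_{l_1}\cdots x_{l_r}$; letting the $l_\alpha$ range over $\{1,\dots,n\}$ realizes every degree-$r$ monomial, so $\m^r\subseteq\I^r_m(A)\subseteq\I^r_m(F)$. As $m=2j+1$ is unbounded, $\m^r\subseteq\itot(F_{\geq n},r)$ for all $n$, and combining with the trivial inclusion gives $\itot(F_{\geq n},r)=\m^r$ for all $n$, hence $\istab(k,r)=\m^r$.

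The main obstacle is exactly the hypothesis $\dim_k H_1(K)\geq 2$: with a single degree-$2$ variable $T_1$, all elements $e_l T_1^{(a)}$ in a fixed homological degree have the same exponent $a$, the submatrix above collapses to one $1\times n$ block, and one only recovers $\m\subseteq\I^1_m(F)$ — which is sharp, since this is precisely where hypersurfaces fail the statement. The remaining points are routine: that $A$ is a direct-summand subcomplex of the acyclic closure and that the divided-power differential satisfies $d(T_j^{(a)})=z_j T_j^{(a-1)}$ regardless of $\chr k$ are standard features of the acyclic-closure formalism, and the classical identification $H_1(K)\cong I/\n I$ needed for (2) is available since $I\subseteq\n^2$. (In passing, the argument shows the stronger fact $\I^r_m(F)=\m^r$ for all sufficiently large odd $m$.)
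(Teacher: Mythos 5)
Your proof is correct and takes essentially the same approach as the paper's: both work in the Tate/acyclic-closure resolution $F$ of $k$, exhibit an $r\times r$ diagonal submatrix of $d_m^F$ in arbitrarily large odd degrees $m$ whose determinant is an arbitrary degree-$r$ monomial in $\m$ (using degree-$1$ exterior variables $e_i$ and degree-$2$ divided-power variables $T_j$, with the key observation that $d(e_\ell T^a) - x_\ell T^a$ has positive exterior degree), and deduce (2) from (1) via $\dim_k H_1(K) = \mu(I)$. The only cosmetic difference is that the paper restricts attention to $T_1, T_2$ and indexes its chosen rows and columns by $T_1^{s-i}T_2^i$, whereas you index by general multi-indices $a$ with $|a| = j$ and make the direct-summand sub-DG-algebra $K\langle T_1,\dots,T_c\rangle$ explicit.
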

\begin{proof}
   Let $(F, d)$ be the free resolution of $k$ constructed in  \cite{Tate_57} (which is minimal by a result of Gulliksen \cite{gulliksen}). The resolution $F$ is a dg $R$-algebra: as an $R$-algebra, it is a tensor product of a divided power algebra in some collection of even (and nonnegative) degree variables and an exterior algebra in some collection of odd (and positive) degree variables. Equip $F$ with an $R$-basis given by monomials in these variables. Denote the exterior variables comprising our basis of $F_1$ by $X_{1}, \dots, X_{n}$, and set $x_\ell \coloneqq d(X_{\ell})$; the elements $x_1, \dots, x_n$ form a minimal generating set for $\m$. 
   Since $\dim_k H_1(K)\geq 2$, we may also choose two distinct divided power variables $T_{1},T_{2}$ in our basis of $F_2$.
   
   Fix $r \ge 1$, and let $x_{j_1}, \dots, x_{j_r}$ be a set of $r$ (not necessarily distinct) elements in $\{x_1, \dots, x_n\}$. To prove (1), it suffices to show that, given an  integer $N \ge 1$, there is some $M \ge N$ such that the matrix $d_M : F_M \to F_{M-1}$ (with respect to our chosen basis of $F$) has an $r \times r$ diagonal submatrix with the elements $x_{j_1}, \dots, x_{j_r}$ on the diagonal. Notice that, for any $k_1, k_2 \ge 0$ and $1 \le \ell \le n$, we have
   \begin{equation}
   \label{eq:relation}
   d(X_{\ell} T_{1}^{k_1} T_{2}^{k_2}) = x_{\ell} T_{1}^{k_1} T_{2}^{k_2} + (\text{terms with $X_{\ell}$ as a factor)}.
   \end{equation}
   Without loss of generality, assume $N$ is odd. Set $M \coloneqq \max\{N, 2(r-1) + 1\}$ and $s   \coloneqq \frac{M - 1}{2}$. The relation \eqref{eq:relation} implies that the following is a submatrix of $d_M$:
 $$
    \bordermatrix{~ & X_{j_1}T_1^{s} & X_{j_2}T_1^{s-1}T_2 & \cdots & X_{j_{r-1}}T_1^{s-(r-2)}T_2^{r-2}& X_{j_r}T_1^{s-(r-1)}T_2^{r-1}\cr
                  T_1^{s} & x_{j_1} & 0 & \cdots & 0 & 0\cr
                  T_1^{s-1}T_2 & 0 & x_{j_2} & \cdots & 0& 0\cr
                  \vdots & \vdots & \vdots & \cdots & \vdots & \vdots \cr
                  T_1^{s-(r-2)}T_2^{r-2} & 0 & 0 & \cdots & x_{j_{r-1}}& 0\cr
                  T_1^{s-(r-1)}T_2^{r-1} & 0 & 0 & \cdots & 0& x_{j_r}\cr
                  }.
$$  
This proves (1). To prove (2), we observe that $\dim_k H_1(K)$ is equal to $\dim_k(I/\n I)$, the minimal number of generators of $I$ (see e.g. \cite[Lemma 1.4.15]{Gulliksen1969HomologyOL}). Thus, $R$ is a hypersurface if and only if $\dim_k H_1(K)=1$. Now apply (1).
   \end{proof}

 \begin{rem}
When $\mathbb{Q} \subseteq R$ and $\depth(R) > 0$, one may use \cite[Theorem 1]{EG} to prove \Cref{residue_field_stable_ideals}.
 \end{rem}

\subsection{Stable ideals of minors and the conductor}

\begin{defn}
\label{def:conductor}
Let $R$ be a reduced ring. The \emph{conductor} of $R$ is defined to be
$
\cond_R = \{r \in R \text{ : } r \cdot \overline{R} \subseteq R\},
$
where $\overline{R}$ denotes the integral closure of $R$.
\end{defn}
The main goal of this subsection is to prove Theorem~\ref{thm:conductor}, which illustrates a relationship between stable ideals of $1 \times 1$ minors and the conductor in the case of a 1-dimensional analytically unramified local ring (cf. \cite[Corollary 3.3]{Wang1994OnTF}). We first recall some background on trace ideals:

\begin{defn}
Given a module $M$ over a commutative ring $R$, the \emph{trace ideal} of $M$ over $R$, denoted $\Tr_R(M)$, is the ideal in $R$ generated by the images of all morphisms $f \in M^\vee \coloneqq \Hom_R(M, R)$. Alternatively, $\Tr_R(M)$ is the image of the evaluation map $M \otimes_R M^\vee \to R$. 
\end{defn}

\begin{remark}
\label{rem:ref}
If $M$ is reflexive (i.e. the natural map $M \to (M^\vee)^\vee$ is an isomorphism), then $\Tr_R(M) = \Tr_R(M^\vee)$.  
\end{remark}

The connection between trace ideals and ideals of $1 \times 1$ minors is provided by the following result:

\begin{prop}\label{syzygy_trace}
Let $R$ be a Gorenstein local ring and $M$ a finitely generated $R$-module whose minimal free resolution $F$ is infinite. 
\begin{enumerate}
\item If 
$n > \dim(R) + 1$, then $\I^1_n(\F)=\Tr_R(\syz_n(M))$. \item If $M$ is maximal Cohen-Macaulay (MCM), then $\I^1_1(M)= \Tr_R(\syz_1(M))$.
\end{enumerate}
\end{prop}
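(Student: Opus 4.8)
The plan is to realize both $\I^1_n(\F)$ and $\Tr_R(\syz_n(M))$ as images of evaluation maps, and to see that they agree by means of a single $\Ext$-vanishing statement supplied by the Gorenstein/MCM hypothesis. Factor the differential $d_n\colon F_n\to F_{n-1}$ as the composite $F_n\xra{\epsilon}\syz_n(M)\xra{\iota}F_{n-1}$, where $\epsilon$ is the canonical surjection onto the $n^{\th}$ syzygy and $\iota$ is the inclusion $\syz_n(M)=\ker(d_{n-1})\subseteq F_{n-1}$. Unwinding the definition, $\I^1_n(\F)$ is the ideal generated by the entries of $d_n$, equivalently by the elements $\phi(d_n(e))$ as $\phi$ ranges over $F_{n-1}^\vee$ and $e$ over $F_n$. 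Since $\phi(d_n(e))=(\iota^\vee\phi)(\epsilon(e))$ and $\epsilon$ is onto, this is exactly the ideal generated by $\{\psi(m): \psi\in\im(\iota^\vee\colon F_{n-1}^\vee\to\syz_n(M)^\vee),\ m\in\syz_n(M)\}$, whereas $\Tr_R(\syz_n(M))$ is by definition generated by the same expressions with $\psi$ ranging over all of $\Hom_R(\syz_n(M),R)$. Hence it suffices to prove that the restriction map $\iota^\vee\colon F_{n-1}^\vee\to\syz_n(M)^\vee$ is surjective.

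Applying $\Hom_R(-,R)$ to $0\to\syz_n(M)\xra{\iota}F_{n-1}\to\syz_{n-1}(M)\to 0$ (with the convention $\syz_0(M)=M$, so that $F_{n-1}/\syz_n(M)\cong\syz_{n-1}(M)$) produces an exact sequence $F_{n-1}^\vee\xra{\iota^\vee}\syz_n(M)^\vee\to\Ext^1_R(\syz_{n-1}(M),R)\to\Ext^1_R(F_{n-1},R)=0$; so the surjectivity we want is equivalent to $\Ext^1_R(\syz_{n-1}(M),R)=0$. For (1), when $n>\dim(R)+1$ we have $n-1>\dim(R)$, so the depth lemma applied successively along $\F$ shows $\syz_{n-1}(M)$ is maximal Cohen-Macaulay, and since $R$ is Gorenstein every MCM $R$-module $N$ satisfies $\Ext^{>0}_R(N,R)=0$; in particular $\Ext^1_R(\syz_{n-1}(M),R)=0$. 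For (2), $\syz_0(M)=M$ is MCM by hypothesis, so $\Ext^1_R(M,R)=0$, and running the same argument with $n=1$ gives $\I^1_1(M)=\Tr_R(\syz_1(M))$. The hypothesis that $\F$ is infinite is used only to guarantee $\syz_n(M)\neq 0$ for the relevant $n$, so that the degenerate conventions for $\I^r$ of a map with zero source, or for the trace of the zero module, never intervene.

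I do not expect a serious obstacle here: the content is the factorization $d_n=\iota\circ\epsilon$ together with two standard facts---that sufficiently high syzygies over a Cohen-Macaulay local ring are MCM, and that MCM modules over a Gorenstein local ring are totally reflexive and hence have vanishing higher $\Ext$ into $R$. The only step requiring mild care is the first reduction: one must note that replacing the matrix entries of $d_n$ by the full collection $\{\psi(m)\}$ does not enlarge the generated ideal (immediate from $R$-bilinearity of evaluation), and that surjectivity of $\epsilon$ legitimately identifies $\{\epsilon(e):e\in F_n\}$ with all of $\syz_n(M)$.
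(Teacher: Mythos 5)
Your proof is correct, and it takes a genuinely different and somewhat leaner route than the paper's. Both arguments ultimately rest on $\Ext^1$--vanishing coming from the Gorenstein/MCM hypothesis, but they are organized differently. The paper dualizes the two short exact sequences $0 \to \syz_{i+1}(M) \to F_i \to \syz_i(M) \to 0$ for $i = n-2, n-1$ (so it needs $\Ext^1_R(\syz_i(M),R)=0$ for \emph{both} of these $i$), splices them into a free presentation $F_{n-2}^\vee \to F_{n-1}^\vee \to \syz_n(M)^\vee \to 0$, invokes the Lindo--Vasconcelos identification of the trace ideal of a module with the ideal of entries read off such a presentation to get $\Tr_R(\syz_n(M)^\vee) = \I^1_n(F)$, and finally uses reflexivity of the MCM module $\syz_n(M)$ to replace $\Tr_R(\syz_n(M)^\vee)$ by $\Tr_R(\syz_n(M))$. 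You instead factor $d_n = \iota\circ\epsilon$, realize both $\I^1_n(F)$ and $\Tr_R(\syz_n(M))$ as evaluation images, and reduce the comparison to surjectivity of $\iota^\vee$, which needs a \emph{single} vanishing $\Ext^1_R(\syz_{n-1}(M),R)=0$. This avoids the external trace lemma and the reflexivity step, and your argument in fact gives the containment $\I^1_n(F) \subseteq \Tr_R(\syz_n(M))$ with no hypotheses. You also handle part (2) directly by the same computation, rather than by the paper's reduction to (1) via the fact that MCM modules without free summands over a Gorenstein ring are arbitrarily high syzygies. One small remark: your argument only needs $\syz_{n-1}(M)$ to be MCM, i.e. $n \ge \dim R + 1$, slightly weaker than the $n > \dim R + 1$ the paper assumes (the paper's extra unit of slack is consumed by the dualization at index $n-2$).
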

\begin{proof}
Let $i \in \{n-2, n-1\}$. Observe that $\Ext^1_R(\syz_{i}(M), R) = \Ext^{i + 1}_R(M, R) = 0$, since $R$ is Gorenstein and $i +1 \ge  n-1 > \dim(R)$. Dualizing the exact sequence
$
0 \to \syz_{i+1}(M) \to \F_{i} \to \syz_{i}(M) \to 0
$
therefore gives the exact sequence
$
0 \to \syz_{i}(M)^\vee \to \F_{i}^\vee \to  \syz_{i+1}(M)^\vee \to  0.
$
Combining these exact sequences for $i = n-2$ and $i = n-1$ yields a free presentation $F_{n-2}^\vee \to F_{n-1}^\vee \to \Syz_n(M)^\vee \to 0$. It thus follows from \cite[\S 2.1.3]{Trace_Lindo} (see also \cite[Remark 3.3]{Vasconcelos1991}) that $\Tr_R(\syz_n(M)^\vee)=\I^1_n(\F)$. Since $R$ is Gorenstein and $n > \dim(R)$, $\Syz_n(M)$ is MCM and therefore reflexive.
By Remark~\ref{rem:ref}, we conclude that $\Tr_R(\syz_n(M)) = \Tr_R(\syz_n(M)^\vee) = \I^1_n(\F)$; this proves (1). For (2): we may assume without loss of generality that $M$ has no free summand. (2) therefore follows from (1), along with the observation that maximal Cohen-Macaualy modules with no free summands over Gorenstein rings are infinite syzygies.
\end{proof}

\begin{cor}\label{Gorenstein_trace}
Let $(R,\m)$ be a Gorenstein local ring and $M$ a maximal Cohen-Macaulay $R$-module. 
\begin{enumerate}
\item The ideal $\itot(M,1)$ is a trace ideal. 
\item If $\istab(M,1)$ is $\m$-primary, then it is also a trace ideal.     
\end{enumerate}
\end{cor}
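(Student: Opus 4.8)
The plan is to reduce both parts to Proposition~\ref{syzygy_trace} together with the basic fact that a sum of trace ideals is a trace ideal (since $\Tr_R(M) + \Tr_R(N) = \Tr_R(M \oplus N)$). First I would dispose of the case where $M$ has finite projective dimension: over a Gorenstein (hence Cohen-Macaulay) local ring an MCM module of finite projective dimension is free, so both $\itot(M,1)$ and $\istab(M,1)$ are the unit ideal $R = \Tr_R(R)$, which is trace, and there is nothing to prove. So I may assume the minimal free resolution $F$ of $M$ is infinite, and by dropping free summands I may assume $M$ has no free summand.

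For part (1): write $\itot(M,1) = \sum_{n \ge 0} \I^1_n(F)$. Split this sum as $\sum_{n \le \dim(R)+1} \I^1_n(F) \; + \; \sum_{n > \dim(R)+1} \I^1_n(F)$. By Proposition~\ref{syzygy_trace}(1), each summand with $n > \dim(R)+1$ equals $\Tr_R(\syz_n(M))$, and by Proposition~\ref{syzygy_trace}(2), $\I^1_1(M) = \Tr_R(\syz_1(M))$ — wait, here I should be a little careful: part (2) of the proposition only handles $n=1$. For the intermediate indices $2 \le n \le \dim(R)+1$ I would instead use the identity from \cite[\S 2.1.3]{Trace_Lindo} directly: since $F_{n-2} \to F_{n-1} \to \syz_{n-1}(M) \to 0$ is a free presentation, $\I^1_{n-1}(F) = \Tr_R(\syz_{n-1}(M)^{\vee})$, which is a trace ideal without any reflexivity hypothesis — the point of Proposition~\ref{syzygy_trace}(1) was only to rewrite $\Tr_R(N^\vee)$ as $\Tr_R(N)$, but for the purpose of part (1) we only need each $\I^1_n(F)$ to be *some* trace ideal. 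Hence every term $\I^1_n(F)$ in the (finite, by Noetherianity — or rather: finitely generated as an ideal, and each $\I^1_n(F)$ individually a trace ideal) sum is a trace ideal, and a finite sum of trace ideals is a trace ideal. Actually the sum defining $\itot(M,1)$ is infinite; but by Noetherianity it equals a finite partial sum, so this is fine.

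For part (2): assume $\istab(M,1)$ is $\m$-primary. The key input is Theorem~\ref{CI_periodicity}... no — more simply, the relevant fact is that $\istab(M,1) = \itot(F_{\ge k},1)$ for $k \gg 0$ once it is $\m$-primary? That is not automatic in general. Instead I would argue as follows: $\istab(M,1) = \bigcap_{j} \itot(F_{\ge j},1)$ is a decreasing intersection of ideals each of which, by the part (1) argument applied to the truncations $F_{\ge j}$ (these are minimal free resolutions of the syzygy modules $\syz_j(M)$, which for $j$ large are MCM since $R$ is Gorenstein), is a trace ideal. So $\istab(M,1)$ is an intersection of trace ideals. Now I invoke the structural fact about trace ideals in a Gorenstein local ring: the property of being an $\m$-primary trace ideal is characterized, e.g. via \cite[Remark 3.3]{Vasconcelos1991} or the theory in \cite{Trace_Lindo}, by $I = \Tr_R(I)$, equivalently $I = I{:}(I{:}R)$ appropriately — the cleanest statement is that an ideal $I$ is a trace ideal iff $I = \Tr_R(I)$, and for $\m$-primary $I$ this is iff $\Hom_R(I,I)$ equals... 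Rather than belabor this, the main obstacle — and the step I expect to need the most care — is precisely this last point: showing that a decreasing intersection of trace ideals which happens to be $\m$-primary is again a trace ideal. I would handle it by noting that $\istab(M,1)$, being $\m$-primary, is an intersection of *finitely many* of the $\itot(F_{\ge j},1)$ (a descending chain of ideals containing a fixed $\m$-primary ideal stabilizes, as $R/\istab(M,1)$ is Artinian), so $\istab(M,1) = \itot(F_{\ge j_0},1)$ for some $j_0$, and then part (1) applied to the complex $F_{\ge j_0}$ (the minimal free resolution of the MCM module $\syz_{j_0}(M)$) finishes it.
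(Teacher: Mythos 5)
Your reduction is correct in outline and your part~(2) argument matches the paper's exactly: since $\istab(M,1)$ is $\m$-primary, the descending chain $\itot(F_{\ge j},1)$ stabilizes by Artinianness of $R/\istab(M,1)$, so $\istab(M,1)=\itot(F_{\ge j_0},1)$ for some $j_0$, and one then invokes part~(1) for the MCM module $\syz_{j_0}(M)$.

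For part~(1), however, you have made the argument unnecessarily convoluted, and the ``intermediate indices'' step as written is not sound. The paper's proof is a one-liner: over a Gorenstein local ring every syzygy of an MCM module is again MCM, so Proposition~\ref{syzygy_trace}(2) applies to $\syz_{n-1}(M)$ for \emph{every} $n\ge 1$ and gives $\I^1_n(F)=\I^1_1(\syz_{n-1}(M))=\Tr_R(\syz_n(M))$; then $\itot(M,1)=\sum_{n\ge 1}\Tr_R(\syz_n(M))=\Tr_R\bigl(\bigoplus_{n\ge 1}\syz_n(M)\bigr)$ is a trace ideal. There is no need to split off the range $n>\dim(R)+1$ or to treat $2\le n\le\dim(R)+1$ separately. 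Your proposed fix for the intermediate range has two problems. First, the free presentation you cite is wrong: the presentation of $\syz_{n-1}(M)$ is $F_n\xrightarrow{d_n}F_{n-1}\to\syz_{n-1}(M)\to 0$, not $F_{n-2}\to F_{n-1}\to\syz_{n-1}(M)\to 0$. Second, and more substantively, the formula from \cite[\S 2.1.3]{Trace_Lindo} does not say that the ideal of entries of a presentation matrix of $N$ is $\Tr_R(N^\vee)$; rather, it expresses $\Tr_R(N)$ as the ideal of entries of a matrix generating $\ker(B^T)$, where $B$ is a presentation matrix of $N$. To identify this with $\I^1_{n-1}(F)$ (or $\I^1_n(F)$) one must know that the dualized complex remains exact at the relevant spot, i.e.\ that $\Ext^{n-1}_R(M,R)=0$ — and this Ext-vanishing is precisely the content of the MCM hypothesis that Proposition~\ref{syzygy_trace}(2) already packages. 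So your detour both leaves this vanishing unaddressed and, once you do address it, collapses back to the case of Proposition~\ref{syzygy_trace}(2). Just apply Proposition~\ref{syzygy_trace}(2) to every syzygy.
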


\begin{proof}
Observe that $\sum_{i \in I} \Tr_R(M_i) = \Tr_R(\bigoplus_{i \in I} M_i)$ for any collection $\{M_i\}_{i \in I}$ of $R$-modules. Part (1) therefore follows from Proposition~\ref{syzygy_trace}(2). Let us now prove (2). Let $F$ be the minimal free resolution of $M$. Since $\istab(M,1)$ is $\m$-primary, it is equal to $\itot(F_{\ge n}, 1)$ for $n \gg 0$ (recall that $F_{\ge n}$ denotes the smart truncation). Now apply (1).
\end{proof}

The following fact is well-known to experts, but due to lack of an exact reference, we include a proof.
\begin{lemma}
\label{nagata}
Let $(R, \m)$ be a 1-dimensional analytically unramified local ring, and denote by $\overline{R}$ (resp. $\widehat{R}$) the integral closure (resp. $\m$-adic completion) of $R$.

\begin{enumerate}
    \item $\overline{R}$ is a finitely generated $R$-module.
    \item The integral closure of $\widehat{R}$ is isomorphic to $\overline{R} \otimes_R \widehat{R}$.
\end{enumerate}
\end{lemma}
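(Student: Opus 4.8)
The plan is to deduce both statements from the classical theory of analytically unramified local rings, for which the standard reference is Nagata's book. For part (1), I would recall that a Noetherian local ring $R$ is analytically unramified precisely when $\widehat{R}$ is reduced, and that a theorem of Nagata guarantees that for such a ring the integral closure $\overline{R}$ in its total ring of fractions is a finitely generated $R$-module. (One can also invoke the fact that a $1$-dimensional analytically unramified local ring is a Nagata ring, hence a finite integral closure; or, if one prefers a self-contained route, use that $\widehat{R}$ reduced forces the normalization to be module-finite in the excellent/Nagata setting.) This is the short half of the lemma.

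For part (2), the strategy is to use that completion is a flat local homomorphism together with the module-finiteness from part (1). Write $T$ for the total ring of fractions of $R$; since $R$ is $1$-dimensional and reduced, $T = R_{\mathfrak{p}_1} \times \cdots \times R_{\mathfrak{p}_s}$ where the $\mathfrak{p}_i$ are the minimal primes, and $\overline{R}$ is the integral closure of $R$ in $T$. First I would check that $\overline{R}\otimes_R \widehat{R}$ is a reduced ring: since $\overline{R}$ is a finite product of normal (hence regular, as they are $1$-dimensional) local domains after localizing, or more directly since $\overline{R}$ is a finite $R$-module that is integrally closed, one argues that its completion is again reduced — here one again uses that $R$ analytically unramified propagates to the finite extension $\overline{R}$ (each local ring of $\overline{R}$ sits over $R$ and its completion injects into a product of completions of DVRs). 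Next, flatness of $R \to \widehat{R}$ gives that $\overline{R}\otimes_R\widehat{R}$ is integral over $\widehat{R}$ and is contained in the total ring of fractions of $\widehat{R}$ (flatness preserves the injection $R \hookrightarrow \overline{R}$ and the non-zero-divisor status of elements outside the minimal primes, using that $\widehat{R}$ is reduced with the same number of minimal primes). Finally, to see $\overline{R}\otimes_R\widehat{R}$ is \emph{all} of the integral closure of $\widehat{R}$, I would note it is module-finite over $\widehat{R}$ (base change of a finite module) and integrally closed: a finite, integral, birational extension of $\widehat{R}$ contained in a normal ring and containing $\overline{R}\otimes_R\widehat{R}$ must agree with it because the inclusion becomes an equality after localizing at every minimal prime and after completing — alternatively, one invokes that normalization commutes with completion for excellent (or Nagata, or analytically unramified) rings.

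The main obstacle is the last point in part (2): showing that the module-finite, integrally-closed-in-the-appropriate-sense ring $\overline{R}\otimes_R\widehat{R}$ is actually \emph{normal}, i.e.\ equals the full integral closure $\overline{\widehat{R}}$ rather than just sitting between $\widehat{R}$ and $\overline{\widehat{R}}$. The clean way around this is to quote the theorem that normalization commutes with completion for analytically unramified (equivalently here, excellent or Nagata) local rings — e.g.\ the fact that $\widehat{\overline{R}} = \overline{\widehat{R}}$ combined with $\widehat{\overline{R}} = \overline{R}\otimes_R\widehat{R}$ (the latter because $\overline{R}$ is a finite $R$-module, so its $\m$-adic and $\mathfrak{m}\overline{R}$-adic completions agree and completion of a finite module is tensor with $\widehat{R}$). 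I would present the argument in that order: (i) finiteness of $\overline{R}$ over $R$; (ii) $\overline{R}\otimes_R\widehat{R} \cong \widehat{\overline{R}}$ from finiteness; (iii) $\widehat{\overline{R}}$ is normal because $\overline{R}$ is normal and analytically unramified (its completion is a reduced, hence normal since $1$-dimensional, product of complete DVRs); (iv) conclude $\widehat{\overline{R}} = \overline{\widehat{R}}$ since it is a finite normal extension of $\widehat{R}$ inside the total quotient ring with the same minimal primes.
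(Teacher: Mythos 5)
Your part (1) matches the paper's approach: both ultimately cite Nagata's theorem that an analytically unramified local ring has module-finite normalization (the paper routes through the decomposition $\overline{R}\cong\prod\overline{R/P}$ over minimal primes before invoking Nagata, a minor difference). For part (2) you and the paper agree on the reduction: since $\overline{R}\otimes_R\widehat{R}$ is a finite birational extension of $\widehat{R}$, it suffices to show it is normal. The proofs of normality then genuinely diverge. The paper produces a non-zero-divisor $t\in\cond_{\widehat{R}}\cap R$ (using that $\cond_{\widehat{R}}$ is nonzero and $\m$-primary) and applies a theorem of Nagata (37.2 in his book) to the ring $\overline{R}$ and the element $t$. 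You instead argue structurally: $\overline{R}$ is a finite product of one-dimensional semi-local Dedekind domains, so $\overline{R}\otimes_R\widehat{R}\cong\widehat{\overline{R}}$ is a finite product of complete DVRs and is therefore normal. Your route is more elementary and avoids choosing a special conductor element; the paper's is shorter once Nagata's result is in hand. One caution: the parenthetical ``reduced, hence normal since $1$-dimensional'' is false as a standalone implication --- $k[[x,y]]/(xy)$ is complete, one-dimensional, and reduced but not normal --- so it is the product-of-complete-DVRs structure, which you do supply, that actually carries the argument. It would also be worth making explicit in your step (iv) that $\overline{R}\otimes_R\widehat{R}$ and $\widehat{R}$ share the same total quotient ring (non-zero-divisors of $R$ remain non-zero-divisors of $\widehat{R}$ by flatness), since that is what upgrades ``normal'' to ``equals $\overline{\widehat{R}}$.''
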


\begin{proof}
For (1): since $R$ is reduced, we have $\overline{R}\cong \prod \overline{R/P}$, where the product ranges over the minimal primes of $R$. Moreover, for every minimal prime $P$ of $R$, $R/P$ is analytically unramified; see e.g. \cite[Proposition 9.1.3]{HS}. The conclusion of (1) then follows from \cite[Appendix 1, Proposition 1]{Nag58}. For (2), note that $\overline{R}$ is a normal semi-local ring by (1). Since $\overline{R} \otimes_R \widehat{R}$ is birational and finitely generated over $\widehat{R}$, it suffices to show $\overline{R} \otimes_R \widehat{R}$ is normal.  Since $\overline{\widehat{R}}$ is a finitely generated module over $\widehat{R}$ by (1), there exists a non-zero-divisor $c\in \overline{\widehat{R}}$ such that $c\overline{\widehat{R}}\subseteq \widehat{R}$; in particular, $\cond_{\widehat{R}}\neq 0$. Since $\cond_{\widehat{R}}$ is primary to the maximal ideal, we have $\m^n\subseteq \cond_{\widehat{R}}$ for some $n$. Thus, we can choose a non-zero-divisor $t\in \cond_{\widehat{R}}\cap R$. In particular, $t$ is a nonzero, nonunit element of $\overline{R}$. Now apply \cite[37.2]{Nagata} to the ring $\overline{R}$ and the element $t \in \overline{R}$ to conclude that $\overline{R} \otimes_R \widehat{R}$ is normal (the assumption on $tR$ in this result of Nagata is satisfied since $t$ is a non-zero-divisor and $\dim(R) = 1$).
\end{proof}

We recall that the \emph{order} of an ideal $I$ in a local ring $(R, \m)$ is $\ord(I) \coloneqq \sup\{k \text{ : } I \subseteq \m^k\}$.

 \begin{lemma}\label{Artinian_socle}
    Let $R$ be an Artinian local ring and $M\in \C^+(\mod \- R)$. We have $\soc(R)\subseteq \istab(M,1)$, and
    $\ord(\istab(M,1))\leq \lfloor l/2 \rfloor$, where $l$ is the Loewy length of $R$. In particular, $\istab(\C^+(\mod \- R),1) \neq 0$.
\end{lemma}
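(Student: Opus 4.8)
The plan is to prove the two assertions separately: $\soc(R)\subseteq\istab(M,1)$ will come from the fact that the socle of each free module in the resolution is carried into the image of the next differential, hence into the ideal generated by that differential's entries; and $\ord(\istab(M,1))\le\lfloor l/2\rfloor$ will come from a ``balance'' constraint preventing two consecutive differentials from having all their entries in an overly high power of $\m$. Let $(F,d)$ be the minimal free resolution of $M$. A first reduction: since $F_{\ge n}$ loses differentials as $n$ grows, the ideals $\itot(F_{\ge n},1)=\sum_{j>n}\I^1_j(F)$ form a non-increasing chain of ideals, which stabilizes ($R$ being Noetherian); thus $\istab(M,1)=\sum_{j>N}\I^1_j(F)$ for some $N\gg 0$. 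The case where $M$ is perfect (the tail is then eventually empty, so $\istab(M,1)=R$) and the case where $R$ is a field are immediate, so I assume neither; then $F_j\ne 0$ for all $j\ge\inf(F)$ and $\soc(R)=(0:_R\m)\supseteq\m^{l-1}\ne 0$.

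For the containment the main point is the claim: \emph{if $F_{j-1}\ne 0$ and $H_{j-1}(M)=0$, then $\soc(R)\subseteq\I^1_j(F)$.} To see this, minimality of $d_{j-1}$ gives $d_{j-1}(\soc(R)F_{j-1})\subseteq\soc(R)\cdot\m F_{j-2}=0$, so $\soc(F_{j-1})=\soc(R)F_{j-1}$ lies in $\ker(d_{j-1})=\im(d_j)$ by exactness; writing $d_j$ as a matrix in fixed bases of $F_j,F_{j-1}$, solving $d_j(v)=c\,\epsilon$ for $0\ne c\in\soc(R)$ and a basis vector $\epsilon$ of $F_{j-1}$ then exhibits $c$ as an $R$-linear combination of entries of $d_j$, so $c\in\I^1_j(F)$. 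Since $M$ is not perfect, such indices $j$ occur arbitrarily far out, so $\soc(R)\subseteq\sum_{j>n}\I^1_j(F)$ for every $n$, and hence $\soc(R)\subseteq\istab(M,1)$. As $\soc(R)\ne 0$ and this holds for every $M$, the last sentence follows: $\istab(\C^+(\mod \- R),1)=\bigcap_M\istab(M,1)\supseteq\soc(R)\ne 0$.

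For the order bound, set $c_j=\ord(\I^1_j(F))$, so $d_j$ has all entries in $\m^{c_j}$. In the range where $F$ is exact I would prove the balance inequality $c_j+c_{j+1}\le l$: from $d_j(\m^{l-c_j}F_j)\subseteq\m^l F_{j-1}=0$ one gets $\m^{l-c_j}F_j\subseteq\ker(d_j)=\im(d_{j+1})\subseteq\m^{c_{j+1}}F_j$, hence $\m^{l-c_j}\subseteq\m^{c_{j+1}}$; since the powers of $\m$ in an Artinian local ring strictly decrease until they vanish, this forces $l-c_j\ge c_{j+1}$. Consequently $\min(c_j,c_{j+1})\le\lfloor l/2\rfloor$ for a suitable $j$ with $j,j+1>N$, so $\istab(M,1)=\sum_{j>N}\I^1_j(F)$ contains an element of $\m$-adic order at most $\lfloor l/2\rfloor$; that is, $\ord(\istab(M,1))\le\lfloor l/2\rfloor$.

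The core arguments are routine; I expect the only real friction to be the bookkeeping for a fully general complex $M\in\C^+(\mod \- R)$: one must verify that arbitrarily far-out places where $F$ is exact with nonzero terms actually exist (immediate when $H_*(M)$ is bounded, and requiring a short separate check when it is not, as well as for perfect complexes and for $R$ a field), and one must check that the conventions for $\I^r$ of maps involving the zero module are consistent with the monotonicity of $\itot(F_{\ge n},1)$ invoked in the reduction.
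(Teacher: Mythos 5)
Your proof follows the same two ideas as the paper's, in the same order: for the socle containment, minimality gives $\soc(R)F_{j-1}\subseteq\ker(d_{j-1})$ and exactness gives $\ker(d_{j-1})=\im(d_j)$, so each $c\in\soc(R)$, being a coordinate of some $d_j(v)$, lies in the ideal generated by the entries of a row of $d_j$; for the order bound, an element of $\m^{l-c_j}F_j$ is killed by $d_j$, lands in $\im(d_{j+1})\subseteq\m^{c_{j+1}}F_j$, and forces $c_{j+1}\leq l-c_j$. Your ``balance inequality'' $c_j+c_{j+1}\leq l$ is a slightly tidier packaging of the paper's step (which shows $\m^{\lfloor l/2\rfloor}\subseteq\I^1_{n+1}(F)$ whenever $\I^1_n(F)\subseteq\m^{\lfloor l/2\rfloor+1}$), but the mechanism is identical.

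The exactness caveat you flag in your last paragraph, however, is not a removable piece of bookkeeping: the ``short separate check'' for unbounded $H_*(M)$ cannot be made, because the lemma as stated is false for general $M\in\C^{+}(\mod \- R)$. Take $R=k[x,y]/(x^3,xy,y^2)$, so $\soc(R)=(y,x^2)$, the Loewy length is $l=3$, and $\lfloor l/2\rfloor=1$. Let $M=F$ be the complex $\cdots\to R\xrightarrow{\,x^2\,}R\xrightarrow{\,x^2\,}R$ in non-negative degrees. This is a minimal bounded-below complex of finite free $R$-modules, hence its own minimal free resolution, but $H_j(F)=\m/(x^2)\neq 0$ for all $j\geq 1$. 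Each $\I^1_j(F)$ equals $(x^2)=kx^2$, so $\istab(F,1)=(x^2)$, which does not contain $y\in\soc(R)$, and $\ord((x^2))=2>\lfloor l/2\rfloor$. Replacing $x^2$ by $y$ gives another such complex with $\istab=(y)$, and $(x^2)\cap(y)=0$, so the closing assertion $\istab(\C^{+}(\mod \- R),1)\neq 0$ fails as well. The paper's own proof has exactly this gap --- it asserts $\ker(d_{n-1})=\im(d_n)$ ``since $F$ is minimal,'' which is not a justification; that equality holds if and only if $H_{n-1}(M)=0$. Both your argument and the paper's are correct once $M$ is restricted to $\C^{+,f}(\mod \- R)$ (bounded homology), which is all the lemma is applied to (in \Cref{thm:conductor} it is invoked for $M/xM$, a module). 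So the right response to the concern you raise is to tighten the hypothesis, not to try to fill the hole.
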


\begin{proof}
    Fix an integer $n\geq 2+\inf(M)$. Let $(F,d)$ be the minimal free resolution of $M$, and write $\beta_{n-1} = \on{rank} F_{n-1}$. The submodule $\soc(R)^{\oplus \beta_{n-1}}\subseteq \F_{n-1}$ is contained in $\ker(d_{n-1}) = \im(d_n)$, since $\F$ is minimal. We therefore have $\soc(R) \subseteq \I^1(v)$ for every row $v$ of $d_n$; in particular, $\soc(R) \subseteq \I^1_n(M)$. Thus, $\soc(R)\subseteq \istab(M,1)$. It follows that $\soc(R) \subseteq \istab(\C^+(\mod \- R),1)$; in particular, $\istab(\C^+(\mod \- R),1) \ne 0$.
    
    Since $R$ is Artinian, $\istab(M,1)=\itot(\F_{\geq k},1)$ for some $k\geq \inf(M)$. Fix $n>k$. If $\ord(\I^1_{n}(\F))\leq \lfloor l/2 \rfloor$, we are done. If not, $\I^1_{n}(\F)\subseteq \m^{\lfloor l/2 \rfloor+1}$, and hence $\m^{\lfloor l/2 \rfloor}$ annihilates $\I^1_{n}(\F)$. Thus, the submodule $(\m^{\lfloor l/2 \rfloor})^{\oplus \beta_{n}}\subseteq \F_{n}$ is contained in $\ker(d_{n})=\im(d_{n+1})$. This implies that $\m^{\lfloor l/2 \rfloor}\subseteq \I^1_{n+1}(\F)$, and so $\ord(\istab(M,1))\leq \ord(\I^1_{n+1}(\F))\leq \lfloor l/2 \rfloor$.
\end{proof}

\begin{thm}
\label{thm:conductor}
  Let $(R, \m)$ be a 1-dimensional analytically unramified local ring and $\cond_R$ the conductor ideal of $R$. 
  \begin{enumerate}
  \item For any $M\in \C^{+, f}(\mod \- R)$, we have $\cond_R\subseteq \istab(M,1)$. Consequently, $\cond_R\subseteq \istab(\C^{+, f}(\mod \- R),1)=\istab(\mod(R),1)$. \item Assume $R$ is Gorenstein with infinite residue field. The following are equivalent:
  \begin{enumerate}
    \item There is a finitely generated $R$-module $M$ such that $\istab(M,1) = \cond_R$.
        \item $R$ is regular or an abstract hypersurface of multiplicity $2$. 
  \end{enumerate}
  \end{enumerate}
\end{thm}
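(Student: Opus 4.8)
The plan is to treat the two parts separately, reducing both to the 1-dimensional local ring situation after passing to the completion, which is legitimate by Lemma~\ref{nagata} (the conductor and stable ideals of minors both behave well under completion — the conductor by Lemma~\ref{nagata}(2), and $\istab$ by the flatness argument in the proof of Theorem~\ref{thm:nonzero}). So I may assume throughout that $R$ is complete, hence $\overline{R}$ is module-finite over $R$ by Lemma~\ref{nagata}(1), and $\cond_R \neq 0$ is $\m$-primary since $\dim R = 1$.

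\textbf{Part (1).} The key observation is that $\overline{R}/R$, or more precisely $\cond_R$, arises from a maximal Cohen-Macaulay module. First I would recall (or reprove) the standard fact that for a 1-dimensional analytically unramified local ring, the conductor annihilates $\Ext^{1}_R(M,-)$ for every MCM module $M$ — equivalently, $\cond_R \subseteq \ca(R)$. One way: every MCM $R$-module $M$ becomes free after tensoring with $\overline{R}$ up to torsion, or one uses Wang's \cite[Corollary 3.3]{Wang1994OnTF} directly; since any $R$-module $M$ of finite projective dimension over $S$... wait, no $S$ here — rather, any high syzygy of any $M \in \C^{+,f}(\mod\text{-}R)$ is MCM because $\operatorname{depth} R \le 1$ and syzygies of sufficiently high order have depth $\ge 1 = \dim R$. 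Then $\cond_R \subseteq \ca(R) \subseteq \istab(M,1)$ by Proposition~\ref{istab_modR}(1). The statement $\istab(\C^{+,f}(\mod\text{-}R),1) = \istab(\mod(R),1)$ is already recorded in Proposition~\ref{istab_modR}(1), completing Part (1). The main thing to get right is the precise reference or short argument that $\cond_R$ kills $\Ext^1_R(\text{MCM},-)$; I expect this is the genuinely load-bearing step, and Wang's Corollary 3.3 is the natural citation.

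\textbf{Part (2), (b)$\Rightarrow$(a).} If $R$ is regular, $\cond_R = R$ and $M = k$ works since $\istab(k,1) = R$ for regular $R$... actually for regular $R$ every complex is perfect, so take $M$ perfect; but $\cond_R = R = \istab(M,1)$ needs $M$ with $\istab(M,1)=R$, e.g. $M = R$ or any module with finite free resolution — any such gives $\istab = R$. If $R$ is an abstract hypersurface of multiplicity $2$, then $R \cong S/(f)$ with $S$ a 2-dimensional regular local ring (after completion) and $\ord(f) = 2$; here $\cond_R$ can be computed and one produces an MCM module $N$ (a suitable matrix-factorization module, or $\Syz_n$ of $k$, which is eventually 2-periodic by Eisenbud's theorem / Theorem~\ref{CI_periodicity}) whose trace ideal is $\cond_R$. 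The identity $\I^1_n(F) = \Tr_R(\Syz_n(M))$ from Proposition~\ref{syzygy_trace}(1) is the right tool: $\istab(M,1)$ becomes a stable intersection of trace ideals, and for the hypersurface case one exhibits an MCM module whose trace ideal is exactly $\cond_R$ (for multiplicity 2, the relevant MCM modules come from a single $1\times 1$ or $2\times 2$ matrix factorization and the trace can be matched to the conductor using that $\overline{R}$ is generated by one element over $R$).

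\textbf{Part (2), (a)$\Rightarrow$(b).} This is the hard direction and the main obstacle. Suppose $\istab(M,1) = \cond_R$ for some $M$; we must force $R$ regular or mult.~2. If $M$ has finite projective dimension then $\istab(M,1)=R$ so $\cond_R=R$, i.e. $R=\overline{R}$ is regular. So assume $M$ has infinite resolution; replacing $M$ by a high syzygy, $M$ is MCM with no free summand, and by Corollary~\ref{Gorenstein_trace}(2) (using that $\cond_R$ is $\m$-primary) $\istab(M,1) = \itot(F_{\ge n},1)$ for $n \gg 0$ is a trace ideal. Now I want to use that $R$ is Gorenstein: for a 1-dimensional Gorenstein ring, trace ideals of MCM modules are closely tied to the conductor and to $\operatorname{Hom}$ into the canonical module; in particular $\cond_R = \Tr_R(\m)$ fails badly unless the multiplicity is small, because the trace ideal of any MCM module contains $\cond_R$ (by Part (1)!) with equality being a strong constraint. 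The plan: any non-free MCM module $N$ contributes a syzygy at step $1$ with $\I^1_1(N) = \Tr_R(\Syz_1 N) \supseteq \cond_R$; by running this over all the syzygies appearing in $F_{\ge n}$ and using periodicity-type finiteness (Theorem~\ref{CI_periodicity} since a Gorenstein 1-dim ring with $\istab$ small... hmm, actually one should not assume hypersurface a priori). Instead, the cleanest route: invoke that equality $\istab(M,1)=\cond_R$ forces, via $\I^1_n(F)=\Tr_R(\Syz_n M)$ and the fact that $\Tr_R(\Syz_n M)/\cond_R$ measures non-freeness on the punctured spectrum, that $\Syz_n(M) \otimes_R \overline{R}$ is free over $\overline{R}$ with the minimal possible "defect", and then a multiplicity computation (the minimal number of generators of the conductor, or $\ell(\overline{R}/R)$ versus $\ell(R/\cond_R)$, using Gorensteinness to get $\ell(\overline{R}/R) = \ell(R/\cond_R)$) pins the multiplicity of $R$ to $\le 2$. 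The delicate point — and where I expect to spend real effort — is showing that no MCM module over a Gorenstein ring of multiplicity $\ge 3$ can have trace ideal equal to $\cond_R$; I would handle this by a careful analysis of $\Hom_R(N, R)$ and $\Hom_{\overline R}(N\otimes \overline R, \overline R)$, or by reducing (using infinite residue field to pick a minimal reduction / Ulrich-type generic element) to an explicit computation in $\overline{R}$, leveraging that the conductor square $\cond_R \overline{R} = \cond_R$ and that $R$ Gorenstein of multiplicity $\ge 3$ has a "too large" $\overline R/R$ for any trace ideal to collapse onto $\cond_R$.
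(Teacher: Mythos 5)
Your treatment of Part (1) matches the paper's in essence: reduce to the complete case via Lemma~\ref{nagata} and flatness, observe that high syzygies are MCM since $\depth R \le 1$, cite Wang to get $\cond_R \subseteq \ca(R)$, and finish with Proposition~\ref{istab_modR}(1). (The paper cites \cite[Proposition 3.1]{Wang1994OnTF} rather than Corollary~3.3, but the point is the same.) This part is fine.

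For Part (2), direction (b)$\Rightarrow$(a), your plan is in the right neighborhood but is not actually carried out. The paper's argument is specific: it takes $M = \overline{R}$, uses the Gorenstein length formula $\ell(\overline{R}/\cond_R\overline{R}) = 2\,\ell(R/\cond_R)$ together with $\mu(\overline{R}) = 2$ (multiplicity $2$) to conclude $\overline{R}/\cond_R\overline{R} \cong (R/\cond_R)^2$, hence $\I^1_1(\overline{R}) \subseteq \cond_R$; then it shows $\Syz_1(\overline{R})$ is a rank-one $\overline{R}$-module, hence isomorphic to $\overline{R}$, so the minimal resolution of $\overline{R}$ is one-periodic and $\istab(\overline{R},1) = \I^1_1(\overline{R}) = \cond_R$ by Part~(1). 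Your proposal gestures at matrix-factorization modules and "matching the trace to the conductor" but supplies none of this; the computation would need to be filled in.

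The real gap is in (2), direction (a)$\Rightarrow$(b). You recognize it as the hard part, but the route you sketch — analyzing $\Hom_R(N,R)$ versus $\Hom_{\overline{R}}(N\otimes\overline{R},\overline{R})$, invoking $\ell(\overline{R}/R)=\ell(R/\cond_R)$, and arguing that a Gorenstein ring of multiplicity $\ge 3$ has a "too large" $\overline{R}/R$ — is speculative and does not obviously close. You miss the key mechanism the paper uses: since the residue field is infinite, choose a minimal reduction $x$ of $\m$; then $x\cond_R = \m\cond_R$ gives $\cond_R \subseteq (x:\m)$, so over the Artinian ring $A = R/xR$ one has $\istab(M/xM,1) = (\cond_R + (x))/(x) \subseteq \soc(A)$. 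Lemma~\ref{Artinian_socle} then bounds $\ord(\istab(M/xM,1)) \le \lfloor l/2 \rfloor$ where $l$ is the Loewy length of $A$, while Gorensteinness gives $\soc(A) \subseteq \m_A^{l-1}$; combining these forces $l \le 2$, hence $\soc(A) = \m_A$, embedding dimension $2$, and multiplicity $2$. Without the reduction modulo a minimal reduction and the Loewy-length bound from Lemma~\ref{Artinian_socle}, your plan has no decisive lever to cap the multiplicity at $2$; I would not expect the trace-ideal/$\Hom$ analysis you propose to succeed on its own.
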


    \begin{proof}
    Let us prove (1). Let $F$ be the minimal free resolution of $M$, and let $\overline{R}$ and $\widehat{R}$ be as in Lemma~\ref{nagata}. It follows directly from Lemma~\ref{nagata} that $\cond_R\widehat{R}\subseteq \cond_{\widehat{R}}$, which implies $\cond_R \subseteq \cond_{\widehat{R}} \cap R$. Now, suppose $\cond_{\widehat{R}}\subseteq \istab(\widehat{M},1)$. We have
    \begin{align*}
    \cond_R\subseteq \cond_{\widehat{R}}\cap R \subseteq \istab(\widehat{M},1)\cap R
     & = (\bigcap \itot(\widehat{F}_{\geq \inf(F)+i},1))\cap R \\
     & = \bigcap (\itot(\widehat{F}_{\geq \inf(F)+i},1)\cap R)\\
     & = \bigcap (\itot(\F_{\geq \inf(F)+i},1)\widehat{R}\cap R) \\
     & = \bigcap \itot(\F_{\geq \inf(F)+i},1) \\
    &= \istab(M,1),
     \end{align*}
    where the third and fourth equalities hold due to flatness and faithful flatness of the completion map, respectively.
    We may therefore assume $R$ is complete and reduced. Since $R$ is a 1-dimensional reduced ring, it is Cohen-Macaulay. Thus, any first syzygy module over $R$ is maximal Cohen-Macaulay (MCM). By \cite[Proposition 3.1]{Wang1994OnTF}, we have $\cond_R\cdot \Ext^1_R(N,N') = 0$ for any finitely generated MCM $R$-module $N$ and finitely generated $R$-module $N'$. It follows immediately that $\cond_R \subseteq \ca(R)$. 
    Applying \Cref{istab_modR}(1), we conclude that $\cond_R \subseteq \istab(M,1)$; this proves (1).

We now prove (2). We need only consider the case where $R$ is not regular. Since $R$ is analytically unramified, the integral closure $\overline{R}$ of $R$ is a finitely generated $R$-module by \Cref{nagata}(1). The assumptions imply that $\cond_R$ is an $\m$-primary ideal. 

Assume (a), and choose a finitely generated module $M$ such that $\istab(M,1) = \cond_R$.     
Since the residue field of $R$ is infinite, we can choose $x\in \m$ such that it is a minimal reduction of $\m$; see \cite{NR}. We have $x\cond_R=\m\cond_R$, and so $\cond_R\subseteq (x:\m)$. Computing over $A\coloneqq R/xR$, we have $\istab(M/xM,1) = (\cond_R+(x))/(x)\subseteq \soc(A)$. It follows from \Cref{Artinian_socle} that the order of $\soc(A)$ is at most $\lfloor l/2 \rfloor$, where $l$ is the Loewy length of $A$. Since $A$ is Gorenstein, we have $\soc(A) \subseteq \m_A^{l-1}$. We conclude that $l\leq 2$, and so $\soc(A)=\m_A$. Thus, the embedding dimension of $R$, and hence $\widehat{R}$, is two. Since $\widehat{R}$ is also reduced and one-dimensional, it is a hypersurface. Moreover, since $\ell(R/xR)=2$, the multiplicity of $R$ is two \cite[Proposition 11.2.2]{HS}. Thus, $R$ is an abstract hypersurface of multiplicity two.

Assume (b). Suppose $R$ is an abstract hypersurface of multiplicity 2. Let $M=\overline{R}$. As $R$ is Gorenstein, we have $\ell(M/\cond_R M) = 2\ell(R/\cond_R)$ \cite[Theorem 12.2.2]{HS}. As the minimal number of generators of $M$ as an $R$-module is 2, we get that $M/\cond_R M \cong (R/\cond_R)^2$, implying $\I^1_1(M)\subseteq \cond_R$. By \Cref{syzygy_trace}, it follows that $\Tr_R(\syz_1(M))\subseteq \cond_R$. Since $\overline{R}$ is a reflexive $R$-module, we have that $\syz_1(M)$ is an $\overline R$-module (see \cite[2.9]{DMS}). The module $\syz_1(M)$ has rank and is rank 1. It is therefore a regular ideal in $R$ and hence isomorphic to $\overline R$. The minimal free resolution of $\overline{R}$ is therefore one-periodic; by part (1), we conclude that $\istab(M,1)=\cond_R$, as desired.\end{proof}

\subsection{Examples}

We now compute some examples of stable ideals of $1 \times 1$ minors. Throughout this subsection, $k$ denotes a field.

\begin{ex}\label{eg1}
Let $R=k[[x,y]]/(x^2,y^2)$ and $M$ a finitely generated non-free $R$-module. We claim that
\begin{equation}
\label{ex1eq}
\istab(M,1) \in 
\begin{cases}
\{ (x),(y), \m \}, & \on{char}(k) \ne 2; \\
\{ (x),(y), (x+y), \m)\},  &
\on{char}(k) = 2.
\end{cases}
\end{equation}
It is easy to show that each of the ideals on the right side of \eqref{ex1eq} arises as $\istab(M,1)$ for some $M$, and so it follows from \eqref{ex1eq} that $\istab(\mod(R),1)=\soc(R)$. To prove \eqref{ex1eq}, it suffices, since $R$ is Artinian, to determine all possible values of $\itot(M,1)$. Let $I=\I^1_1(M)$.
If $I=\m$, then $\itot(M,1)=\m$. If $I = \m^2$, then $\I^1_2(M) = \m$ and so $\itot(M,1)=\m$. We may therefore assume that $I$ contains exactly one minimal generator of $\m$, say $t$. We have $I = (t)$, and so $\ann(t) \subseteq \I^1_2(M)$, which means $(t)+\ann(t)\subseteq \itot(M,1)$. One easily checks: 
$$
(t)+\ann(t) \in 
\begin{cases}
\{ (x),(y), \m\}, & \on{char}(k) \ne 2; \\
\{ (x),(y), (x+y), \m\},  &
\on{char}(k) = 2.
\end{cases}
$$
\end{ex}

\begin{ex}\label{eg2}
Let $R=k[[t^3,t^4]]\cong k[[x,y]]/(x^4-y^3)$ and $M$ a finitely generated module of infinite projective dimension. We now show that $\istab(M,1)$ is either $(x^2,y)$ or $(x,y)$, and both possibilities occur. We may assume $M$ is MCM and indecomposable. Since $R$ is a hypersurface, $\istab(M,1)$ is equal to $\I^1(A) + \I^1(B)$, where $(A, B)$ is the matrix factorization associated to $M$. Moreover, since $R$ is an ADE singularity, there are only finitely many such matrix factorizations up to isomorphism. One concludes by observing the list of these matrix factorizations in \cite[9.13]{yoshino}.
It follows that $\istab(\mod(R),1)=(x^2,y)$. Observe that $\cond_R = (t^6,t^7,t^8)= (x,y)^2$, which, as predicted by Theorem~\ref{thm:conductor}, is properly contained in $\istab(\mod(R),1) =(x^2,y)$.
\end{ex}

\begin{ex}\label{eg3}
Let $R=k[[t^4,t^5,t^6]]\cong k[[x,y,z]]/(x^3-z^2,xz-y^2)$. We now show that $\istab(\mod(R),1)=(x^2,xy,z)$. Let $M$ be a non-free MCM $R$-module. We have $\cond_R = (t^8, t^9, t^{10}, t^{11}) = (x,y,z)^2$; it therefore follows from Theorem~\ref{thm:conductor}(1) that $\istab(M,1)$ is $\m$-primary. Thus, by Corollary~\ref{Gorenstein_trace}(2), $\istab(M,1)$ is a trace ideal. Given $a \in k$, let $I_a \coloneqq (x-ay, z)$; we conclude from \cite[Example 3.4]{GIK} and Theorem~\ref{thm:conductor}(2) that $\istab(M,1)$ must belong to the list $\{(x,y,z), (x^2,y,z), (x^2,xy,z)\} \cup \{I_a, a\in k\}$. Each of these ideals contains $(x^2,xy,z)$, and so $(x^2,xy,z) \subseteq \istab(\mod(R),1)$. We also have
$
 I_0\cap I_1 = (x^2,xy,z);
$
thus, to show the containment $\istab(\mod(R),1) \subseteq (x^2,xy,z)$, it suffices to show that $\istab(I_a, 1) = I_a$ for $a = 0, 1$. One may see this via an explicit calculation: the minimal $R$-free resolution of $I_0$ is 
$
\left[ R^2 \xla{A} R^2
 \xla{B} R^2 \xla{A} R^2 \xla{B} \cdots \right],
$
where $A  = \begin{pmatrix} -x& -z \\ z & x^2 \end{pmatrix}$ and $B = \begin{pmatrix} -x^2& -z \\ z & x \end{pmatrix}$; and the minimal $R$-free resolution of $I_1$ is $
\left[ R^2 \xla{A'} R^2
 \xla{B'} R^2 \xla{A'} R^2 \xla{B'} \cdots \right],
$
where we have $A'  = \begin{pmatrix} y-x &x^2 -z \\ z & x^2 + xy \end{pmatrix}$ and $B' = \begin{pmatrix} -x^2 - xy& x^2-z \\ z & x-y \end{pmatrix}$. Finally, we observe once again that $\cond_R$ is properly contained in $\istab(\mod(R),1)$, in accordance with Theorem~\ref{thm:conductor}.
\end{ex}

\section{Periodicity of ideals of minors over complete intersections}\label{CI_section}
We now study the eventual behavior of ideals of minors in free resolutions over complete intersections. In \S\ref{sec:shamashbackground}, we recall the notion of a Shamash resolution, and we explain how Shamash resolutions can be interpreted in terms of matrix factorizations. We prove in \S\ref{sec:shamashres} (resp. \S\ref{periodicity_ideal_of_minors}) that the  ideals of minors in the Shamash (resp. minimal free) resolution of a finitely generated module $M$ over complete intersections are eventually 2-periodic. In the case of a Shamash resolution, we obtain a bound on the homological degree at which this periodicity starts that is independent of $M$, mirroring the behavior of minimal free resolutions over hypersurfaces (Corollary~\ref{cor:easy}). For minimal free resolutions, we show that this is impossible (Example~\ref{uniform_bound_example}). 

Let us establish some notation for this section. Let $S$ be a commutative ring, $f_1,\dots,f_c\in S$, and $R=S/(f_1,\dots,f_c)$. Let $\tS$ denote the graded ring $S[t_1, \dots, t_c]$, where $\deg(t_i) = -2$, and $\tD$ the graded $S$-module $\Hom_S(\tS, S)$. Observe that $\tD$ is the divided power algebra over $S$ on the degree $2$ dual variables $y_1, \dots, y_c$ corresponding to $t_1, \dots, t_c$; moreover, $\tD$ is an $\tS$-module via contraction.

\subsection{The Shamash construction and matrix factorizations}
\label{sec:shamashbackground}
We begin by recalling the Shamash construction on a complex of $S$-modules associated to $f_1, \dots, f_c \in S$, introduced by Shamash in the case where $c = 1$ \cite{SHAMASH19711} and generalized by Eisenbud to the $c \ge 1$ case \cite{eisenbudenriched}. Our reference is Eisenbud-Peeva's book \cite{EP}. 

Given $\a \in \Z^c_{\ge 0}$, write $t^{\a} \coloneqq t_1^{a_1} \cdots t_c^{a_c}$, and let $|\a| = \sum_{i = 1}^c a_i$.

\begin{dfn}[\cite{EP} Definition 3.4.1]\label{Shamash_def}
Let $G$ be a complex of free $S$-modules. A \emph{system of higher homotopies for $f_1,\dots,f_c$ on $G$} is a collection $\sigma$ of morphisms\footnote{Our definition looks slightly different from \cite[Definition 3.4.1]{EP} at first glance, as the latter involves morphisms $G \to G[-2|a|+1]$. This is because we are using a different convention for homological shifts: in \cite{EP}, $C[1]_i \coloneqq C_{i - 1}$, while for us, $C[1]_i \coloneqq C_{i + 1}$.  }
\[\sigma_{\a}:G\rightarrow G[2|\a|-1]\]
of graded modules (not complexes) for each $\a = (a_1, \dots, a_c) \in \Z^c_{\ge 0}$ with the following properties:
\begin{enumerate}
\item[(a)] $\sigma_0$ is the differential on $G$. 
\item[(b)] Letting $e_i$ denote the $i^{\th}$ standard basis vector for each $i$, the map $\sigma_0\sigma_{e_i}+\sigma_{e_i}\sigma_0$ is given by multiplication by $f_i$ on $G$ for each $1\leq i\leq c$.
\item[(c)] If $|a|  \geq 2$, then
$\sum_{\u+\mathbf{v}=\a}\sigma_{\u}\sigma_{\mathbf{v}}=0$.
\end{enumerate}
\end{dfn}

\begin{constr}\label{shamash_construction}
Let $G$ be a complex of free $S$-modules equipped with a system of higher homotopies $\sigma$. The \emph{Shamash construction} $\sh(G,\sigma)$ associated to the pair $(G, \sigma)$ has underlying module $\tD\otimes_S G \otimes_S R$ and differential $\mathlarger{\sum} t^{\a}\otimes \sigma_{\a} \otimes \id$.
\end{constr}
\begin{prop}[\cite{EP} Propositions 3.4.2 and 4.1.4]\label{higher_homotopies}
 Let $G$ be an $S$-free resolution of a finitely generated $R$-module $M$.
\begin{enumerate}
    \item There exists a system of higher homotopies on $G$ for $f_1,\dots,f_c$.
    \item Assume that $f_1, \dots, f_c \in S$ form a regular sequence. If $\sigma$ is a system of higher homotopies on $G$ for $f_1,\dots,f_c$, then $\sh(G,\sigma)$ is an $R$-free resolution of $M$.
\end{enumerate}
\end{prop}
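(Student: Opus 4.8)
The plan is to prove the two parts by separate inductions: Part~(1) by induction on $|\a|$, and Part~(2) by a filtration/spectral-sequence argument (equivalently, an induction on $c$ that reduces to the classical one-variable Shamash construction). Throughout, the essential input is that $G$ is an $S$-free resolution of a module $M$ that is \emph{annihilated} by each $f_i$ (since $M$ is a module over $R = S/(f_1,\dots,f_c)$), so that multiplication by $f_i$ on $G$ is null-homotopic.

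\textbf{Part~(1).} The map $\sigma_0 = d^G$ is forced. For $|\a| = 1$, say $\a = e_i$, the chain endomorphism $f_i\cdot\id_G$ of $G$ lifts $f_i\cdot\id_M = 0$, hence is null-homotopic by the comparison theorem; any null-homotopy serves as $\sigma_{e_i}$, which is exactly relation~(b). For the inductive step, suppose $\sigma_\u$ has been built for all $0 < |\u| < |\a|$ so that all relations involving only such multi-indices hold, and set
\[
\tau_\a \;\coloneqq\; -\sum_{\substack{\u + \mathbf{v} = \a \\ \u,\mathbf{v} \ne 0}}\sigma_\u\sigma_\mathbf{v},
\]
a graded map $G \to G[2|\a|-2]$. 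Using the lower relations---a telescoping, sign-sensitive computation in which the ``$f_i$-terms'' coming from the substitutions $d^G\sigma_\u = (\text{relation RHS}) - \sigma_\u d^G$ cancel in pairs---one checks that $\tau_\a$ commutes with $d^G$, i.e.\ is a chain map of (even) degree $2|\a|-2 \ge 2$. Since $\tau_\a$ raises homological degree and $G$ is exact in positive degrees, the equation $d^G\sigma_\a + \sigma_\a d^G = \tau_\a$ can be solved for $\sigma_\a$ inductively on homological degree, using projectivity of the free modules $G_n$; any such solution satisfies relation~(c) for $\a$ by construction, completing the induction.

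\textbf{Part~(2).} First, $\sh(G,\sigma)$ is a complex of free $R$-modules, bounded below and of finite rank in each degree when $G$ is: that the differential $\sum_\a t^\a \otimes \sigma_\a \otimes \id$ squares to zero is precisely relations~(a)--(c) together with the vanishing of each $f_i$ on $G \otimes_S R$. For acyclicity, filter $\sh(G,\sigma) = \tD \otimes_S G \otimes_S R$ by the degree in $\tD$; every component $t^\a \otimes \sigma_\a \otimes \id$ with $|\a| \ge 1$ strictly decreases this degree, so the associated graded complex is $\tD \otimes_S (G \otimes_S R)$ with differential $\id_{\tD}\otimes d^G \otimes \id_R$, whose homology is $\tD \otimes_S \Tor^S_*(M,R)$. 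Here the regular-sequence hypothesis enters: the Koszul complex $K(f_1,\dots,f_c;S)$ resolves $R$ over $S$, so $\Tor^S_*(M,R) = H_*\big(K(f_1,\dots,f_c;M)\big)$. The resulting spectral sequence (first-quadrant in the relevant sense, convergent since the filtration is bounded below and exhaustive) has first differential induced by the single homotopies $\sigma_{e_i}$, and one shows that it collapses, so that $H_*(\sh(G,\sigma)) = M$ is concentrated in degree~$0$. Equivalently, one may induct on $c$: the homotopies supported on the first $c-1$ coordinates make $G' \coloneqq \sh\big(G,\sigma|_{a_c=0}\big)$ an $R' \coloneqq S/(f_1,\dots,f_{c-1})$-free resolution of $M$ by induction, the remaining homotopies assemble into a system of higher homotopies for $f_c$ on $G'$, one identifies $\sh(G,\sigma) \cong \sh(G',\,\cdot\,)$ as complexes, and since $f_c$ is a nonzerodivisor on $R'$ with $f_c M = 0$ the base case $c = 1$ (the original Shamash construction, handled by the same filtration argument in the now elementary situation $\Tor^S_0(M,R) = \Tor^S_1(M,R) = M$ and $\Tor^S_{\ge 2}(M,R) = 0$) finishes the proof.

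\textbf{Main obstacle.} I expect two points to require real care. In Part~(1), it is the sign-careful verification that $\tau_\a$ is a chain map: this is the mechanism that lets relation~(c) propagate, and it is where the precise sign conventions in the definition of a system of higher homotopies must be reconciled with the paper's shift convention. In Part~(2), it is the degeneration of the spectral sequence---equivalently, the statement that the single homotopies $\sigma_{e_i}$ exactly ``cancel'' the higher Koszul homology of $M$ so that only $M$ in degree~$0$ survives; this is precisely where regularity of $f_1,\dots,f_c$ is indispensable, and if one organizes the argument as an induction on $c$, this entire difficulty is localized in the one-variable base case.
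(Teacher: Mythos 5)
The paper does not prove this proposition; it quotes it from Eisenbud--Peeva \cite{EP} (Propositions 3.4.2 and 4.1.4), so there is no in-paper argument to compare against. Your outline follows the same basic route as that reference.

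Part (1) is sound. You correctly reduce the inductive step to showing that $\tau_\a$ is a chain map of positive even degree on $G$ and then invoking acyclicity of $\Hom_S(G,G)$ in positive homological degrees (equivalently, $\Ext_S^{<0}(M,M)=0$) to solve $d^G\sigma_\a + \sigma_\a d^G = \tau_\a$; the telescoping verification that $\tau_\a$ commutes with $d^G$ does go through as you assert.

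Part (2) has a genuine gap, which you flag but do not close. The filtration by $\tD$-degree and the identification $E^1 \cong \tD\otimes_S\Tor^S_*(M,R)$ are fine, but the assertion that the spectral sequence collapses so that only $M$ in degree $0$ survives \emph{is} the content of the theorem, not bookkeeping: each $\Tor^S_i(M,R)$ for $0\le i\le c$ is a nonzero sum of copies of $M$ (tensored with all of $\tD$), and it is not automatic that all of it cancels. Trading the spectral sequence for the induction on $c$ relocates rather than resolves the difficulty, since the $c=1$ base case still requires proving that the $d^1$ differential $y^{(j)}\Tor^S_0(M,R)\to y^{(j-1)}\Tor^S_1(M,R)$ induced by $\sigma_1$ is an isomorphism. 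That is exactly where the nonzerodivisor hypothesis acts: one must check that this map agrees (up to sign) with the connecting isomorphism of the Tor long exact sequence for $0\to S\xra{f} S\to R\to 0$. The check is short---for $x\in G_0$ one has $d^G\sigma_1(x)=fx$, so $\sigma_1(x)$ maps to the class of $x$ under the connecting map---but it is a necessary step, not an ``elementary situation'' that handles itself merely because $\Tor^S_{\ge 2}(M,R)=0$. As written, the base case, and hence Part (2), is asserted rather than proved.
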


When the complex $\sh(G, \sigma)$ in Construction \ref{shamash_construction} is an $R$-free resolution of a module $M$, we call it a \emph{Shamash resolution} of $M$. 

\begin{rem}
If, in the context of Construction \ref{shamash_construction}, the maps $\sigma_{\a}$ in the system of higher homotopies are all minimal, then $\sh(G, \sigma)$ is also minimal. In the setting of Proposition~\ref{higher_homotopies}(b), it is not always possible to choose a minimal system of higher homotopies; that is, minimal free resolutions of complete intersections are not always Shamash resolutions. However, the minimal free resolution of the residue field of a complete intersection is always a Shamash resolution \cite{gulliksen, Tate_57}.
\end{rem}

\begin{ex}
\label{ex:shamash1}
Let $k$ be a field, $S = k[x,y,z,w]$, $f_1 = xz$, and $f_2 = yw$. The minimal $S$-free resolution of $S / (x,y)$ is the Koszul complex 
$$
G = (S \xla{\begin{pmatrix} x & y \end{pmatrix}} S^2 \xla{\begin{pmatrix} -y \\ x \end{pmatrix}} S).
$$
A system of higher homotopies on $G$ is given as follows. Take $\sigma_{e_1}$ (resp. $\sigma_{e_2}$) to be the nullhomotopy of multiplication by $xz$ (resp. $yw$) given by the maps $\begin{pmatrix} z \\ 0 \end{pmatrix}$ and $\begin{pmatrix} 0 & z \end{pmatrix}$ (resp. $\begin{pmatrix} 0 \\ w \end{pmatrix}$ and $\begin{pmatrix} -w & 0 \end{pmatrix}$). For degree reasons, all the other $\sigma_{\a}$ must be 0. One easily checks that these maps form a system of higher homotopies. The complex $\sh(G, \sigma)$ is given by $R^{i+1}$ in homological degree $i$:
\begin{equation}
\label{ex:resolution}
R \xla{\begin{pmatrix} x & y \end{pmatrix}}R^2 
\xla{\begin{pmatrix} -y & z & 0 \\ x & 0 & w \end{pmatrix}} 
R^3
\xla{\begin{pmatrix} 0&z&-w&0 \\ x&y&0&0 \\ 0&0&x&y \end{pmatrix}}
R^4 \from \cdots
\end{equation}
Since the maps $\sigma_{\a}$ in our system of higher homotopies are minimal, this complex is the minimal $R$-free resolution of $S / (x, y)$.
\end{ex}

We now give an alternative take on the Shamash construction in the language of matrix factorizations, as this perspective will be useful for our study of ideals of minors in Shamash resolutions in \S\ref{sec:shamashres}. Our approach relies heavily on work of Martin~\cite{martin}.

\begin{dfn}
A \emph{graded matrix factorization} of $\tf = f_1t_1 + \cdots + f_ct_c \in \tS$ is a pair $(F, d)$, where $F$ is a finitely generated, graded, free $\tS$-module, and
$d$ is a degree $-1$ endomorphism of $F$ such that $d^2 = \tf \cdot \id_F$.
A \emph{morphism} of graded matrix factorizations is a degree 0 endomorphism of $F$ that commutes with $d$, and a \emph{homotopy} of such morphisms $f$ and $g$ is a degree $1$ endomorphism $h$ of $F$ such that $f- g = dh + hd$. Let $\mf(\tS, \tf)$ denote the category of graded matrix factorizations of $\tf$ and $\hmf(\tS, \tf)$ the quotient of $\mf(\tS, \tf)$ given by modding out null-homotopic morphisms.
\end{dfn}

\begin{ex}
\label{ex:mf}
Let $G$ be a bounded complex of finitely generated free $S$-modules equipped with a system of higher homotopies $\sigma$ for $f_1, \dots, f_c$. Letting $F = \bigoplus_{i \in \Z} \tS(-i) \otimes_S G_i$ and $d = \sum t^{\a} \otimes \sigma_{\a}$ gives an object $(F, d)$ in $\mf(\tS, \tf)$. 

For instance, applying this to the complex in Example~\ref{ex:shamash1}, we get $F = \tS(-1)^2 \oplus \tS \oplus \tS(-2)$ (the terms are out of numerical order so that the components generated in odd and even degrees are separated) and
$d = \begin{pmatrix} 0 & A \\ B & 0 \end{pmatrix}$, where $A = \begin{pmatrix} zt_1 & -y \\ wt_2 & x \end{pmatrix}$ and $B = \begin{pmatrix} x & y \\ -wt_2 & zt_1 \end{pmatrix}$. 
\end{ex}

By a result of Burke-Stevenson \cite[Theorem 7.5]{BS}, when $S$ is regular of finite Krull dimension and $f_1, \dots, f_c$ is a regular sequence, there is an equivalence of categories
$$
\hmf(\tS, \tf) \xra{\simeq} \Db(R)
$$
(see also \cite[Theorem 1]{BW} for a related result). Burke-Stevenson's proof makes crucial use of a theorem of Orlov concerning singularity categories of graded Gorenstein algebras \cite[Theorem 2.5]{orlov}. Martin gives an alternative formulation of this equivalence in his thesis \cite{martin} via a version of the BGG correspondence; we note that Martin's result does not require $S$ to have finite Krull dimension. As this equivalence is not expressed in \cite{martin} in quite the way we need it, we reformulate Martin's result in the following way. 

We consider the functor $\mf(\tS, \tf) \to \on{D}(R)$ that sends a matrix factorization $(F, d)$ to the complex $(F \otimes_{\tS} R, d)$ of $R$-modules; the component in homological degree $i$ of $F\otimes_{\tS} R$ is  $F_{i} \otimes_S R$. This functor respects homotopy and therefore induces a functor
\begin{equation}
\label{eq:functor}
\hmf(\tS, \tf) \to \on{D}(R).
\end{equation}
Given $(F,d) \in \mf(\tS, \tf)$, let $F^\vee$ denote the $S$-linear dual of $F$. We also define a functor
$$
\Phi : \hmf(\tS, \tf) \to \on{D}(R)^{\op}
$$
that sends an object $(F, d)$ to the complex $F^\vee \otimes_S R$ with differential $d^T \otimes \id$. That is, $\Phi$ is given by dualizing over $S$ and tensoring with $R$; equivalently, $\Phi$ is given by applying \eqref{eq:functor} and dualizing over $R$.

The following theorem is a nearly immediate consequence of work of Martin \cite{martin}:

\begin{thm}[\cite{martin}]
\label{equivalence_mf_D^b(R)}
Let $S$ be a regular ring, and assume $f_1, \dots, f_c$ form a regular sequence in $S$. 
The functor \eqref{eq:functor} induces an equivalence $\hmf(\tS, \tf) \xra{\simeq} \Db(R)$. If $R$ has finite Krull dimension, then $\Phi$ determines an equivalence $\hmf(\tS, \tf) \xra{\simeq} \Db(R)^{\op}$.
\end{thm}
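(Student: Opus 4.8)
The plan is to deduce Theorem~\ref{equivalence_mf_D^b(R)} from Martin's results in \cite{martin} by identifying the functor \eqref{eq:functor} with the composition of Martin's equivalence and a standard equivalence of categories. First I would recall the precise form of Martin's theorem: via his BGG-type correspondence, Martin constructs an equivalence between $\hmf(\tS,\tf)$ and a suitable derived category attached to $R$ (or to the graded hypersurface / curved setting), but phrased in a language — e.g. in terms of graded modules over $\tS$, a twisted version of $R$, or the singularity category — that is not literally the functor sending $(F,d)$ to $(F\otimes_{\tS}R, d)$. The first step is therefore bookkeeping: match conventions (homological versus cohomological indexing, the sign of the shift, the normalization of $\tf = \sum f_i t_i$ with $\deg t_i = -2$) between \cite{martin} and the present paper, and check that under these conventions the underlying functor of Martin's equivalence is exactly, or is naturally isomorphic to, the functor \eqref{eq:functor}.

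Next I would verify that \eqref{eq:functor} is well-defined and lands in $\Db(R)$: given $(F,d)\in\mf(\tS,\tf)$, the complex $(F\otimes_{\tS}R,d)$ has zero square for the differential because $d^2 = \tf\cdot\id_F$ and $\tf\equiv 0$ after tensoring down to $R = S/(f_1,\dots,f_c)$ (here one uses that $\tf$ maps to $0$ in $\tS\otimes_{\tS}R = R$, i.e. $f_i t_i \mapsto 0$ since $f_i\mapsto 0$). Since $F$ is a finitely generated graded free $\tS$-module, each $F_i\otimes_S R$ is a finitely generated free $R$-module and the complex is bounded, so it defines an object of $\Db(R)$; homotopic morphisms of matrix factorizations induce homotopic maps of complexes, giving the induced functor on $\hmf(\tS,\tf)$. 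The essential surjectivity and full faithfulness then follow from Martin's theorem once the identification of functors is in place; essential surjectivity in particular uses that any bounded complex of free $R$-modules can be lifted to a system of higher homotopies over $S$ (Proposition~\ref{higher_homotopies}), which under Example~\ref{ex:mf} produces a preimage matrix factorization.

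For the second assertion, I would observe that $\Phi$ is by definition the composite of \eqref{eq:functor} with $R$-linear duality $(-)^\vee_R : \Db(R)\to\Db(R)^{\op}$. Since $R$ has finite Krull dimension and is a complete intersection (hence Gorenstein), $R$-linear duality is an exact auto-anti-equivalence of $\Db(R)$; more precisely, on the subcategory of perfect complexes $\on{RHom}_R(-,R)$ is an equivalence $\Db(R)\xra{\simeq}\Db(R)^{\op}$, and our complexes $F\otimes_{\tS}R$ are bounded complexes of finitely generated free modules, hence perfect. Composing the equivalence from the first part with this duality gives the desired equivalence $\hmf(\tS,\tf)\xra{\simeq}\Db(R)^{\op}$. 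One should note that the $S$-linear dual $F^\vee$ with differential $d^T$ is genuinely a matrix factorization of $\tf$ again (since $(d^T)^2 = (d^2)^T = \tf\cdot\id$), so $\Phi$ is at least well-defined on $\mf(\tS,\tf)$ before passing to the homotopy category; and that dualizing over $S$ and then tensoring with $R$ agrees with tensoring with $R$ and then dualizing over $R$ because $F$ is $S$-free of finite rank.

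The main obstacle I anticipate is the translation step: Martin's thesis phrases the equivalence through a BGG correspondence, so the bulk of the work is confirming that, after unwinding his definitions and reconciling grading and shift conventions, his functor coincides with the concretely described $(F,d)\mapsto (F\otimes_{\tS}R,d)$ up to natural isomorphism — rather than merely being abstractly an equivalence with the same source and target. Everything else (well-definedness, the perfectness needed for duality, the Gorenstein property of $R$) is routine. I would expect the proof in the paper to be short precisely because it cites \cite{martin} for the hard analytic content and only needs to record this reformulation.
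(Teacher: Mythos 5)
Your plan follows the same basic route as the paper: cite Martin's equivalence and then do bookkeeping to identify it with \eqref{eq:functor}, then get the $\Phi$ statement by dualizing. The paper is more concrete about the first step: it invokes \cite[Theorem 5.1]{martin}, which produces an equivalence $\hmf(\tS,\tf) \xra{\simeq} \Db(K)$ (with $K$ the Koszul dg algebra on $f_1,\dots,f_c$) via the explicit ``Koszul factorization'' $X$, and then composes with the equivalence $\Db(K)\xra{\simeq}\Db(R)$ induced by the quasi-isomorphism $K\xra{\simeq}R$. If you intend your ``bookkeeping'' step to be carried out honestly, you would likely end up rediscovering exactly this intermediate passage through $\Db(K)$.

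There is, however, a genuine error in your proposal. You assert twice that $F\otimes_{\tS}R$ is a \emph{bounded} complex of finitely generated free $R$-modules, once to conclude that it defines an object of $\Db(R)$ and once to conclude that it is perfect so that $R$-linear duality applies. Both assertions are false. A graded free $\tS$-module of finite rank, say $F=\bigoplus_k \tS(-d_k)$, has $F_i = \bigoplus_k \tS_{i-d_k}$, and since $\deg t_j = -2$ the ring $\tS$ has nonzero components in every nonpositive even degree. Consequently $F_i\ne 0$ for infinitely many $i$: the complex $F\otimes_{\tS}R$ is bounded above but unbounded below. (Concretely, $\Phi(F,d)$ is a Shamash-type complex whose ranks typically grow without bound, as in Example~\ref{ex:shamash1}; $F\otimes_{\tS}R$ is its $R$-dual.) So these complexes are not perfect, and the duality relating \eqref{eq:functor} to $\Phi$ is genuinely Gorenstein duality: one needs $\on{RHom}_R(-,R)$ to be an anti-autoequivalence of $\Db(R)$, which holds because $R$ is Gorenstein of finite Krull dimension and therefore has finite injective dimension --- not because the complexes involved are perfect. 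You invoke the correct hypothesis (Gorenstein, finite Krull dimension) but then undercut yourself with the perfectness claim; if the complexes really were perfect, the Gorenstein hypothesis would be superfluous. Similarly, the reason $F\otimes_{\tS}R$ represents an object of $\Db(R)$ is that it has \emph{bounded homology} (this is part of what Martin's theorem gives), not that the complex itself is bounded. These points should be corrected before the argument is sound.
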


\begin{proof}
Let $K$ be the Koszul complex on $f_1, \dots, f_c$, with differential denoted $d_K$. We view $K$ as an exterior algebra $\bigwedge_S(e_1, \dots, e_c)$. Let $X \in \mf(\tS, \tf)$ be the object with underlying module $\bigoplus_{i = 0}^c \tS(-i) \otimes_S K_i$ and differential $d_X = (1 \otimes d_K) + \lambda$, where $\lambda$ is given by left multiplication by the element $\sum_{i = 1}^c t_i \otimes e_i$. By \cite[Theorem 5.1]{martin}, there is an equivalence $\hmf(\tS, \tf) \xra{\simeq} \Db(K)$ that sends a matrix factorization $(F, d_F)$ to the complex $\Hom_{\tS}(X, F)$ with differential $  \alpha d_X - (-1)^{\deg(\alpha)}d_F \alpha$.
The differential squares to 0 since $d_X$ and $d_F$ both square to multiplication by $\tf$. Since $f_1, \dots f_c$ is a regular sequence, extension of scalars along the quasi-isomorphism $K \xra{\simeq} R$ induces an equivalence $\Db(K) \xra{\simeq} \Db(R)$; composing, we obtain an equivalence
$
\hmf(\tS, \tf) \xra{\simeq} \Db(R).
$
Tracing through the formulas, one sees that this equivalence sends $(F, d_F)$ to the complex $(F \otimes_{\tS} R,d)$, and so \eqref{eq:functor} determines an equivalence as stated. Since $R$ is Gorenstein and has finite Krull dimension, the last statement is immediate.
\end{proof}

The connection between matrix factorizations and Shamash resolutions is provided by the following result. Before stating it, we note that, given a complex $G$ of free $S$-modules and a system of higher homotopies $\sigma_{\a} : G \to G[2|\a| - 1]$ for $f_1, \dots, f_c$, the maps $\sigma_{\a}^\vee : G^\vee \to G^\vee[2|\a| -1]$ determine a system of higher homotopies for $f_1, \dots, f_c$ on $G^\vee$.

\begin{prop}
\label{rem:shamash}
Let $G$, $\sigma$, and $(F, d)$ be as in Example~\ref{ex:mf}. Applying $\Phi$ to $(F, d)$ gives the Shamash construction $\sh(G^\vee, \sigma^\vee)$.
\end{prop}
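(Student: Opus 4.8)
The plan is to unwind both sides of the claimed equality and check that the underlying $R$-modules and differentials literally agree. Recall that $\Phi$ sends a matrix factorization $(F,d)$ to the complex $F^\vee \otimes_S R$ with differential $d^T \otimes \id$, where $F^\vee$ denotes the $S$-linear dual. Here $F = \bigoplus_{i \in \Z} \tS(-i) \otimes_S G_i$, so I would first compute $F^\vee$. Since each $G_i$ is a finitely generated free $S$-module and $\tS = S[t_1,\dots,t_c]$ is free over $S$, we have $F^\vee = \Hom_S(F,S)$, and the $S$-linear dual of $\tS(-i)\otimes_S G_i$ is naturally identified with $\tD(i) \otimes_S G_i^\vee$, where $\tD = \Hom_S(\tS,S)$ is the divided power algebra and the grading shift is chosen so that $d^T$ still has degree $-1$. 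Thus $F^\vee \otimes_S R \cong \tD \otimes_S G^\vee \otimes_S R$ as a graded $R$-module, which is exactly the underlying module of $\sh(G^\vee, \sigma^\vee)$ from Construction~\ref{shamash_construction}.

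Next I would check the differentials match. The differential on $F$ is $d = \sum_{\a} t^{\a}\otimes \sigma_{\a}$, where $t^{\a} : \tS(-i) \to \tS(-i + 2|\a|)$ is multiplication by the monomial $t^{\a}$ and $\sigma_{\a} : G_i \to G_{i - 2|\a| + 1}$. Transposing over $S$: the transpose of multiplication-by-$t^{\a}$ on $\tS$ is contraction by $t^{\a}$ on $\tD$ (this is precisely the $\tS$-module structure on $\tD$ described in the paper's setup for \S\ref{CI_section}), and the transpose of $\sigma_{\a}$ is $\sigma_{\a}^\vee : G_{i-2|\a|+1}^\vee \to G_i^\vee$. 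Writing these in terms of the divided-power basis $y^{\a}$ dual to $t^{\a}$, contraction by $t^{\a}$ against $y^{\b}$ gives $y^{\b - \a}$ (or $0$ if $\b - \a \not\ge 0$), which is exactly the form of the Shamash differential $\sum t^{\a}\otimes \sigma_{\a}^\vee \otimes \id$ acting on $\tD \otimes_S G^\vee \otimes_S R$. So after tensoring with $R$, the differential $d^T \otimes \id$ on $F^\vee \otimes_S R$ coincides with the Shamash differential on $\sh(G^\vee, \sigma^\vee)$; one also notes that $\sigma^\vee = (\sigma_{\a}^\vee)_{\a}$ is indeed a system of higher homotopies for $f_1,\dots,f_c$ on $G^\vee$, as remarked in the paragraph preceding the proposition, so $\sh(G^\vee,\sigma^\vee)$ makes sense.

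The main obstacle is purely bookkeeping: getting the grading shifts and the identification of duals precisely right so that contraction on $\tD$ appears with the correct sign and degree, and confirming that the homological indexing conventions of Construction~\ref{shamash_construction} line up with the convention $C[1]_i = C_{i+1}$ used throughout. In particular I should be careful that the component of $F^\vee$ in a given degree, after tensoring down to $R$, is $F_i^\vee \otimes_S R$ in homological degree $i$ — matching the description of $\Phi$ as "apply \eqref{eq:functor} and dualize over $R$" — and that no sign discrepancies arise from the transpose. Once the identifications $\tS^\vee \cong \tD$ (as $\tS$-modules, via contraction) and $(\tS(-i)\otimes G_i)^\vee \cong \tD(i) \otimes G_i^\vee$ are pinned down, the equality of the two complexes is immediate from matching terms.
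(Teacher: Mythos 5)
Your proposal is correct and takes essentially the same route the paper intends: the paper's proof is the single line ``This is a straightforward calculation,'' and your argument is precisely that calculation carried out. You correctly identify that $(\tS(-i)\otimes_S G_i)^\vee \cong \tD(i)\otimes_S G_i^\vee$, that the transpose of multiplication by $t^{\a}$ on $\tS$ is contraction by $t^{\a}$ on $\tD$ (which is exactly the $\tS$-module structure on $\tD$ the paper fixes), and that the transpose of $\sigma_{\a}$ is $\sigma_{\a}^\vee$, so $d^T\otimes\id$ on $F^\vee\otimes_S R$ becomes the Shamash differential $\sum t^{\a}\otimes\sigma_{\a}^\vee\otimes\id$ on $\tD\otimes_S G^\vee\otimes_S R$. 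Your closing caveats about grading shifts and the homological indexing convention are the only things left to pin down; they are genuinely just bookkeeping and do not hide any gap.
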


\begin{proof}
This is a straightforward calculation.
\end{proof}

Motivated by Proposition~\ref{rem:shamash}, we make the following
\begin{defn}
We call a complex of $R$-modules a \emph{Shamash complex} if it is isomorphic (as a complex, not just in $\on{D}(R)$) to $\Phi(F, d)$ for some object $(F, d) \in \mf(\tS, \tf)$.
\end{defn}

\subsection{Periodicity of ideals of minors in Shamash complexes}
\label{sec:shamashres}

We now prove that ideals of minors in Shamash complexes are eventually 2-periodic (Theorem~\ref{shamashperiodicity}). Moreover, we give an explicit upper bound on the homological index where the periodicity begins. For ideals of $1 \times 1$ minors of Shamash resolutions of modules over complete intersections, the periodicity begins after $\dim(S)$+1 steps, mirroring the starting point for the periodicity of minimal free resolutions of modules over hypersurfaces (Corollary~\ref{cor:eisenbud}).

We begin with a key technical lemma. We will need the following

\begin{nota}
\label{thetanotation}
Let $F$ be a graded free $\tS$-module. Choose a basis of $F$, so that we may write $F = \bigoplus \tS(-d_i)$. Given $i \in \Z$, the graded component $F_i$ has a $S$-basis $B_i$ indexed by monomials in $\tS$; notice that the $B_i$ are determined canonically by our choice of basis for $F$. We set
$$
\theta_F(i) \coloneqq \on{sup} \{j : \text{$t_1^j$ divides an element of $B_i$} \}.
$$
If $F$ is bounded above, i.e. $F_\ell = 0$ for all $\ell \gg 0$, then $\theta_F(i) < \infty$ for all $i$.
\end{nota}

\begin{lem}
\label{lem:technical}
Assume $c > 1$. Let $G$ and $G'$ be graded free $\tS$-modules that are bounded above and equipped with bases; moreover, assume $G$ is generated entirely in even (resp. odd) degree, and $G'$ is generated entirely in odd (resp. even) degree. Let $A: G \to G'$ be an $\tS$-linear map that is homogeneous of degree $-1$. Denote by $A_i : G_i \to G_{i -1}'$ the $S$-linear map induced by $A$. Suppose there is some even (resp. odd) integer $n \ll 0$ such that $\I^1(A_i) = \I^1(A_{i-2})$ for all $i \le n$. Fix $r \ge 1$; for any even (resp. odd) integer $\ell$ such that $\ell \le -2(r-1)(\max\{\theta_G(n), \theta_{G'}(n-1)\} + 1) + n$ (see Notation~\ref{thetanotation}), we have $\I^r(A_\ell) = \I^1(A_n)^r$.
\end{lem}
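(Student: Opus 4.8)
The plan is to reduce the statement to an explicit construction of a diagonal submatrix of $A_\ell$. I will run the argument in the case where $G$ is generated in even degree and $n,\ell$ are even (the ``resp.'' case is symmetric). One inclusion is formal: every $r\times r$ minor of a matrix is a signed sum of products of $r$ of its entries, so $\I^r(A_\ell)\subseteq \I^1(A_\ell)^r$; and the periodicity hypothesis $\I^1(A_i)=\I^1(A_{i-2})$ for $i\le n$ forces $\I^1(A_\ell)=\I^1(A_n)$ since $\ell$ is even with $\ell\le n$, giving $\I^r(A_\ell)\subseteq \I^1(A_n)^r$. So the real content is the reverse inclusion $\I^1(A_n)^r\subseteq\I^r(A_\ell)$. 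Since $\I^1(A_n)^r$ is generated by products $\lambda_1\cdots\lambda_r$ of (not necessarily distinct) entries of the matrix $A_n$, it suffices to produce, for each such product, an $r\times r$ submatrix of $A_\ell$ equal to $\operatorname{diag}(\lambda_1,\dots,\lambda_r)$.

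First I would record how the entries of the $A_i$ look in terms of the chosen bases. With the fixed $\tS$-bases $\{g\}$ of $G$ and $\{g'\}$ of $G'$ (of even, resp. odd, degree), the induced $S$-basis of $G_i$ is $\{t^\alpha g : |\alpha|=(d_g-i)/2\}$, and similarly for $G'$. As $A$ is $\tS$-linear of degree $-1$, write $A(g)=\sum_{g'}\bigl(\sum_\gamma\lambda_{g',\gamma}t^\gamma\bigr)g'$ with $\lambda_{g',\gamma}\in S$; then the entry of the $S$-matrix of $A_i$ in the row indexed by $t^\beta g'$ and column indexed by $t^\alpha g$ is $\lambda_{g',\beta-\alpha}$, understood to be $0$ unless $\beta-\alpha\ge 0$ coordinatewise. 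In particular each entry $\lambda_s$ of $A_n$ has the form $\lambda_{g'_s,\gamma_s}$ and is witnessed by basis elements $t^{\alpha_s}g_s$ of $G_n$ and $t^{\beta_s}g'_s$ of $G'_{n-1}$ with $\beta_s=\alpha_s+\gamma_s$; moreover $|\alpha_s|\le\theta_G(n)$ and $|\beta_s|\le\theta_{G'}(n-1)$, so every coordinate of $\alpha_s$ and of $\beta_s$ is at most $N:=\max\{\theta_G(n),\theta_{G'}(n-1)\}$.

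Then I would carry out the construction, using $c>1$. Let $m:=(n-\ell)/2$, so the hypothesis on $\ell$ reads $m\ge(r-1)(N+1)$; set $b:=N+1$ and, for $1\le s\le r$, $\mu_s:=\bigl(m-(s-1)b\bigr)e_1+(s-1)b\,e_2$, which is a nonnegative multi-index of total degree $m$. Put $u_s:=t^{\alpha_s+\mu_s}g_s$ and $v_s:=t^{\beta_s+\mu_s}g'_s$; a degree count places $u_s$ in the basis of $G_\ell$ and $v_s$ in the basis of $G'_{\ell-1}$, and the $u_s$ (resp. the $v_s$) are pairwise distinct because for $s\ne s'$ their $e_2$-exponents differ: $(s-s')b$ has absolute value at least $b>N$, which bounds $|(\alpha_s)_2-(\alpha_{s'})_2|$ (resp. $|(\beta_s)_2-(\beta_{s'})_2|$). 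By the entry formula, the $(v_s,u_s)$-entry of $A_\ell$ is $\lambda_{g'_s,\gamma_s}=\lambda_s$, while for $s'\ne s$ the multi-index $(\beta_{s'}+\mu_{s'})-(\alpha_s+\mu_s)$ has a strictly negative coordinate --- the $e_1$-coordinate when $s'>s$, the $e_2$-coordinate when $s'<s$ --- because $b>N$ again dominates the contributions of $\beta_{s'}$ and $\alpha_s$, so the $(v_{s'},u_s)$-entry is $0$. Hence the $r\times r$ submatrix on rows $v_1,\dots,v_r$ and columns $u_1,\dots,u_r$ is $\operatorname{diag}(\lambda_1,\dots,\lambda_r)$, so $\lambda_1\cdots\lambda_r\in\I^r(A_\ell)$.

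The step I expect to be the crux is forcing the off-diagonal entries to vanish. In contrast to the proof of Theorem~\ref{residue_field_stable_ideals}, where a Leibniz rule does this for free, here the vanishing has to be engineered by pushing the chosen basis elements far apart in the $t_1$- and $t_2$-exponents; this is the only place the hypothesis $c>1$ is needed, and balancing ``far enough apart'' against the ranges of $\alpha_s,\beta_s$ is exactly what produces the stated lower bound on $\ell$.
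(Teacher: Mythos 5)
Your proof is correct and follows essentially the same route as the paper's: both place the chosen entries of $A_n$ on a diagonal of a submatrix of $A_\ell$ by multiplying the witnessing monomials by $t_1^{m-(s-1)b}t_2^{(s-1)b}$ (your $\mu_s$ is the paper's $g_i$, your $b$ is the paper's $p$, your $m$ is the paper's $s$), and both argue that the off-diagonal entries vanish because the $t_1$- or $t_2$-exponent gap of size $b > N$ cannot be bridged by the original monomials. The only cosmetic difference is that you phrase the vanishing via a negative coordinate in the exponent difference rather than via non-divisibility.
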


\begin{proof}
It is immediate that $\I^r(A_\ell) \subseteq \I^1(A_\ell)^r = \I^1(A_n)^r$; we now prove the opposite containment. We view each $A_i : G_i \to G'_{i-1}$ as a matrix with entries in $S$ via the monomial bases of $G_i$ and $G'_{i-1}$ induced by our chosen bases of $G$ and $G'$. Let $x_1, \dots, x_r$ be entries of $A_n$; it suffices to exhibit an $r \times r$ submatrix of $A_\ell$ with determinant $x_1 \cdots x_r$. Each of our entries $x_k$ of $A_n$ corresponds to monomial basis elements $m_k$ and $n_k$ of $G_n$ and $G'_{n-1}$, respectively. Let $s \coloneqq (n - \ell)/2
$, $p \coloneqq \max\{\theta_G(n), \theta_{G'}(n-1)\} + 1$, and $g_i \coloneqq t_1^{s - (i-1)p}t_2^{(i-1)p} \in \tS$ for $1 \le i \le r$. Notice that each $s - (i - 1)p$ is indeed nonnegative, by our assumption on $\ell$. Observe also that the degree of each $g_i$ is $-(n-\ell)$.

By our choice of $p$, we have $m_ig_i = m_jg_j$ if and only if $i = j$. Indeed, if $m_ig_i = m_jg_j$ for $i > j$, then $m_it_1^{(i - j)p} = m_jt_2^{(i -j)p}$; this means $t_1^p$ divides $m_j$, which is impossible. Similarly, $n_ig_i = n_jg_j$ implies $i = j$. We therefore have the following $r \times r$ submatrix $P$ of $A_\ell$:
$$
    \bordermatrix{~ & m_1g_1 & m_2g_2& \cdots & m_{r-1}g_{r-1} & m_rg_{r}\cr
                  n_1g_1 & p_{1,1} & p_{1,2} & \cdots & p_{1,r-1}& p_{1, r}\cr
                  n_2g_2 & p_{2,1} & p_{2,2} & \cdots & p_{2,r-1}& p_{2, r}\cr
                  \vdots & \vdots & \vdots & \cdots & \vdots & \vdots \cr
                  n_{r-1}g_{r-1} & p_{r-1,1} & p_{r-1,2} & \cdots & p_{r-1,r-1}& p_{r-1, r}\cr
                  n_rg_r & p_{r,1} & p_{r,2} & \cdots & p_{r,r-1}& p_{r, r}\cr
                  }.
$$  
Since $A$ is $\tS$-linear, we have $p_{i, i} = x_i$ for all $i$. It thus suffices to show that $P$ is lower triangular (in fact, we will see that it is diagonal). We have $A(m_jg_j) = g_jA(m_j)$, which is a linear combination of the form $\sum_{k} a_kg_jb_k$, where each $a_k$ is in $S$, and each $b_k$ is a basis element of $G'_{n-1}$. It is impossible that $g_jb_k = g_in_i$ for some $i < j$, since $t_1^{s-(i-1)p}$ divides the right side but not the left side. It follows that $p_{i, j} = 0$ for all $i < j$ (in fact, a similar argument shows $p_{i, j} = 0$ for all $i > j$ as well, i.e. $P$ is diagonal).
\end{proof}

\begin{prop}
\label{mfperiodicity}
Assume $c > 1$. Let $(F, d) \in \mf(\tS, \tf)$, and fix $r \ge 1$. Write $d_j : F_j \to F_{j-1}$ for the degree $j$ component of $d$. Let $N_0$ (resp. $N_1$) be the largest even (resp. odd) degree in which $F$ is generated and $n_0$ (resp. $n_1$) the smallest even (resp. odd) such degree. Set
$$
M_i \coloneqq -(r-1)(N_i - n_i + 4) + n_i, \quad i = 0, 1. 
$$
\begin{enumerate}
\item If $\ell \in \Z$ is even, and $\ell \le M_0$, then $\I^r(d_\ell) = \I^1(d_{n_0})^r$.
\item If $\ell \in \Z$ is odd, and $\ell \le M_1$, then $\I^r(d_\ell) = \I^1(d_{n_1})^r$.
\end{enumerate}
\end{prop}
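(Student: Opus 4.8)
The plan is to reduce Proposition~\ref{mfperiodicity} to Lemma~\ref{lem:technical} by exploiting the matrix factorization structure of $(F,d)$. Write $d = \begin{pmatrix} 0 & A \\ B & 0 \end{pmatrix}$ with respect to the decomposition $F = F^{\even} \oplus F^{\odd}$ into the submodules generated in even and odd degrees, where $A : F^{\odd} \to F^{\even}$ and $B : F^{\even} \to F^{\odd}$ are homogeneous of degree $-1$; the identity $d^2 = \tf \cdot \id_F$ becomes $AB = \tf\cdot\id$ and $BA = \tf \cdot \id$. The point is that the graded components of $A$ and $B$ realize the maps $d_\ell$ for $\ell$ in the appropriate parity, and that the eventual $2$-periodicity of the ideals $\I^1(d_i)$ that the lemma requires as a hypothesis is automatic here: since $\tf t^{\a} = \sum_i f_i t_i t^{\a}$ and multiplication by the $t_i$ shifts degree by $-2$, the relation $AB = \tf \cdot \id$ forces the entries of $B_\ell$ (for $\ell \ll 0$) to be, up to reindexing by monomials, the same $S$-entries appearing in $B_{\ell+2}$; more precisely one checks directly from $\tf\cdot\id$ that for $\ell$ sufficiently negative $\I^1(A_\ell) = \I^1(A_{\ell - 2})$ and likewise for $B$, with the common value being $\I^1(d_{n_0})$ or $\I^1(d_{n_1})$ as appropriate. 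Actually, since $(F,d)$ is a genuine matrix factorization (not merely eventually one), $\I^1(d_\ell)$ is already constant on each parity class for \emph{all} $\ell$ in the relevant range below the top generating degree — this is where the hypothesis $c > 1$ and boundedness-above of the relevant truncations of $F$ come in, via Notation~\ref{thetanotation}.

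The key steps, in order, are as follows. First, fix the parity (say even, for part (1)) and identify the map $d_\ell : F_\ell \to F_{\ell-1}$ with the degree-$\ell$ component $A_\ell$ of $A : F^{\odd} \to F^{\even}$ — wait, one must be careful with which of $A$, $B$ carries even-to-odd versus odd-to-even; I would set it up so that the even-degree components of $d$ are exactly the graded pieces of one of the two blocks. Second, verify the hypothesis of Lemma~\ref{lem:technical}: namely that there is an even $n \ll 0$ with $\I^1(A_i) = \I^1(A_{i-2})$ for all $i \le n$, and moreover that this stable value is $\I^1(d_{n_0})$. Here $n_0$ is the smallest even generating degree of $F$, so once $\ell \le n_0$ the column (or row) of $A_\ell$ indexed by a given generator involves the full set of entries of $A$ attached to that generator, independent of $\ell$; and $AB = \tf\cdot\id$ together with $BA = \tf \cdot \id$ pins down the entry sets compatibly across the shift by $2$. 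Third, apply Lemma~\ref{lem:technical} to $A$ (with $G = F^{\odd}$, $G' = F^{\even}$, both bounded above since $F$ is finitely generated and we are only looking at sufficiently negative degrees): for even $\ell$ with
$$
\ell \le -2(r-1)\bigl(\max\{\theta_{F^{\odd}}(n), \theta_{F^{\even}}(n-1)\} + 1\bigr) + n,
$$
we get $\I^r(A_\ell) = \I^1(A_n)^r = \I^1(d_{n_0})^r$. Fourth, translate the $\theta$-bound into the stated bound $M_0 = -(r-1)(N_0 - n_0 + 4) + n_0$: take $n = n_0$ (the smallest even generating degree), note $\theta_{F^{\odd}}(n_0)$ and $\theta_{F^{\even}}(n_0 - 1)$ are each bounded by the "spread" of generating degrees, roughly $(N_1 - n_1)/2$ or $(N_0 - n_0)/2$, so that $2(\max\{\cdots\} + 1) \le N_0 - n_0 + 4$ after a short estimate, and one verifies $M_0 \le n_0$ so that the lemma's hypotheses about $n$ being achieved are in range. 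The odd case (part (2)) is identical with the roles of $F^{\even}$ and $F^{\odd}$, and of $N_0, n_0$ and $N_1, n_1$, swapped.

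I expect the main obstacle to be bookkeeping rather than conceptual: getting the exact constant $N_i - n_i + 4$ to come out of the $\theta$-estimate in Lemma~\ref{lem:technical} requires pinning down precisely how large $\theta_{F}(n)$ can be in terms of the generating degrees of $F$. The subtlety is that $\theta_F(i)$ measures the largest power of $t_1$ dividing a monomial basis element of $F_i$, and if $F = \bigoplus_j \tS(-d_j)$, then a monomial in degree $i$ attached to the generator in degree $d_j$ is $t^{\a}$ with $2|\a| = d_j - i$, so $\theta_F(i) \le (d_j - i)/2 \le (\sup_j d_j - i)/2$; one then needs $d_j$ to range only over the parity-appropriate generating degrees and to bound $\sup_j d_j - n$ by $N_i - n_i$ (roughly) when $n = n_i$. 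The other point requiring care is confirming that the hypothesis "$\I^1(A_i) = \I^1(A_{i-2})$ for all $i \le n$" genuinely holds from the matrix-factorization relations and the choice $n = n_0$; this is essentially the observation that below the generating degrees no new entries enter, combined with the fact that the relation $d^2 = \tf\cdot\id$ is degree-homogeneous, but it should be spelled out since it is the crux of why $2$-periodicity (rather than some other period) appears. Neither of these is deep, so the proof should be short once the correspondence with Lemma~\ref{lem:technical} is set up cleanly.
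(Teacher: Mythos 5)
Your overall strategy matches the paper's: split $F$ into its even- and odd-degree submodules, then reduce to Lemma~\ref{lem:technical} with $n = n_0$ (resp.\ $n = n_1$) and translate the $\theta$-bound of that lemma into the explicit constant $N_i - n_i + 4$. The verification that $\I^1(d_i) = \I^1(d_{i-2})$ for $i \le n_0$ is correct in spirit, though your appeal to $AB = \tf\cdot\mathrm{id}$ and $BA = \tf\cdot\mathrm{id}$ is unnecessary — it is pure $\tS$-linearity of $d$ (once $i\le n_0$, every even generator contributes a monomial to degree $i$, and the matrix entries are determined by the $S$-coefficients of $d(e)$ over generators $e$, independent of $i$). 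The index confusion you flag yourself (whether the source is $F^{\even}$ or $F^{\odd}$) is cosmetic and fixable.

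The genuine gap is in your fourth step, the $\theta$-estimate. You claim $\theta_{F^{\odd}}(n_0-1)$ (after fixing the swap) is bounded by the spread of the parity-appropriate generating degrees, ``roughly $(N_1-n_1)/2$,'' and assert the inequality $2(\max\{\cdots\}+1)\le N_0-n_0+4$ then follows ``after a short estimate.'' But a direct count gives $\theta_{F^{\odd}}(n_0-1) = (N_1 - n_0 + 1)/2$, which is \emph{not} bounded by $(N_1-n_1)/2$ unless $n_1\le n_0-1$, nor by $(N_0-n_0)/2$ unless $N_1 \le N_0 - 1$, and neither inequality is a formal property of a graded free $\tS$-module with a degree $-1$ endomorphism. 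What one actually needs is $\theta_{F^{\odd}}(n_0-1)\le\theta_{F^{\even}}(n_0)+1$, equivalently $N_1\le N_0+1$, and this \emph{does} hold — but only because $(F,d)$ is a matrix factorization: if $e$ is a generator in degree $N_1$, then $d(e)\ne 0$ (otherwise $d^2(e)=0\ne\tf e$), so $d(e)$ lives in $(F^{\even})_{N_1-1}\ne 0$, forcing some even generator in degree $\ge N_1-1$. The paper makes exactly this point: it picks a monomial $m=t_1^q m'$ of degree $n_0-1$ realizing the maximum $q=\theta_{F^{\odd}}(n_0-1)$, writes $d(m')=\sum a_i m_i$ with $m_i$ a monomial of $(F^{\even})_{n_0+2q-2}$ (nonzero because $d(m')\ne 0$ by the matrix factorization identity), and observes $t_1^{q-1}m_i$ is a monomial of $(F^{\even})_{n_0}$, giving $q-1\le\theta_{F^{\even}}(n_0)=(N_0-n_0)/2$. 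Your proposal treats this step as routine bookkeeping, but it is precisely where the proposition's content — and the dependence on $N_0-n_0$ alone in the constant, rather than some mixed quantity — lives, and neither your claimed bound nor the constraint rescuing it is proved in your write-up.
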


\begin{proof}
Let us prove (1); the proof of (2) is the same. Choose a basis of $F$, so that each graded component $F_i$ may be equipped with a monomial basis over $S$. Let $F_{\even}$ (resp. $F_{\odd}$) be the submodule of $F$ given by even (resp. odd) degree elements. We will apply Lemma~\ref{lem:technical} to the $\tS$-linear map $d : F_{\even} \to F_{\odd}$. 

By $\tS$-linearity, it is clear that $\I^1(d_i) = \I^1(d_{i-2})$ for all $i \le n_0$. Notice that $\theta_{F_{\even}}(n_0) = \frac{N_0 - n_0}{2}$. Let us now show $\theta_{F_{\odd}}(n_0-1) \le \theta_{F_{\even}}(n_0) + 1$. If $\theta_{F_{\odd}}(n_0-1) = 0$, we are done, so suppose we have a monomial basis element $m \in F_{n_0 - 1}$ of the form $m = t_1^qm'$ for some $q \ge 1$ and monomial $m'$. Write $d(m') = a_1m_1 + \cdots + a_tm_t$, where $a_i \in S$ and each $m_i$ is an element of the monomial basis of $F_{n_0 + 2q - 2}$. We have $t_1^{q-1}m_i \in F_{n_0}$ for all $i$, and so $q - 1 \le \theta_{F_{\even}}(n_0)$, which implies the inequality we seek.

Putting everything together, we have:
\begin{align*}
\ell & \le -(r-1)(N_0 - n_0 + 4) + n_0 \\
& = -2(r-1) (\theta_{F_{\even}}(n_0) + 2) + n_0 \\
& \le -2(r-1)(\max\{\theta_{F_{\even}}(n_0), \theta_{F_{\odd}}(n_0 -1)\} + 1) + n_0.
\end{align*}
Applying Lemma~\ref{lem:technical} finishes the proof.
\end{proof}

\begin{thm}\label{shamashperiodicity}
Assume $c > 1$, and let $(F, d) \in \mf(\tS, \tf)$. Let $n_i$ and $M_i$ for $i = 0, 1$ be as in Proposition~\ref{mfperiodicity}. For all $r \ge 1$ and $\ell \in \Z$, we have:
\begin{enumerate}
\item If $\ell$ is even, and $\ell  \ge -M_1 + 1$, then $\I^r_\ell(\Phi(F, d)) = \I^1_{-n_1 + 1}(\Phi(F, d))^r$. 
\item If $\ell$ is odd, and $\ell  \ge -M_0 + 1$, then $\I^r_\ell(\Phi(F, d)) = \I^1_{-n_0 + 1}(\Phi(F, d))^r$.
\end{enumerate}
In particular, the ideals of minors in Shamash complexes are eventually 2-periodic.
\end{thm}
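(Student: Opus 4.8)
The statement is essentially a dualization of Proposition~\ref{mfperiodicity}, so the plan is to relate the ideals of minors of $\Phi(F,d)$ to those of $(F,d)$ itself, and then invoke the bounds already established. Recall that $\Phi(F,d)$ is the complex $F^\vee \otimes_S R$ with differential $d^T \otimes \id$. First I would observe that, because transposition does not change the ideal of $r\times r$ minors of a matrix, and because $-\otimes_S R$ is the canonical surjection $S \to R$ applied entrywise, we have $\I^r_\ell(\Phi(F,d))$ equal to the image in $R$ of $\I^r(d_{-\ell+1})$, where $d_{-\ell+1}: F_{-\ell+1} \to F_{-\ell}$ is the corresponding graded component of the matrix factorization differential over $\tS$. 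The sign/degree bookkeeping here is the only delicate point: dualizing over $S$ sends the degree $i$ component $F_i$ to a generator in degree $-i$, so the homological index $\ell$ of $\Phi(F,d)$ corresponds to the internal degree $-\ell$ of $F^\vee$, hence to the component $d_{-\ell+1}$ of $d$ (one must also track that $F^\vee$ inherits a $\tS$-module, not just $S$-module, structure via the contraction action, so that the relevant maps really are the $S$-components of an $\tS$-linear map and Proposition~\ref{mfperiodicity} applies verbatim).

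Next, with this translation in hand, I would simply read off the consequences of Proposition~\ref{mfperiodicity}. If $\ell$ is even and $\ell \ge -M_1 + 1$, then $-\ell + 1$ is odd and $-\ell+1 \le M_1$, so part (2) of Proposition~\ref{mfperiodicity} gives $\I^r(d_{-\ell+1}) = \I^1(d_{n_1})^r$; applying $-\otimes_S R$ and using that $\I^1(d_{n_1})$ maps to $\I^1_{-n_1+1}(\Phi(F,d))$ yields $\I^r_\ell(\Phi(F,d)) = \I^1_{-n_1+1}(\Phi(F,d))^r$, which is the claim in part (1). The odd case is identical, using part (1) of Proposition~\ref{mfperiodicity}. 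For the final assertion about eventual $2$-periodicity: once $\ell$ is large enough (exceeding both $-M_0+1$ and $-M_1+1$), both even-index and odd-index ideals of minors are constant in the appropriate residue class, and in particular $\I^r_\ell = \I^r_{\ell+2}$ for all $r \ge 1$; I would also note that the case $c = 1$ is subsumed since a hypersurface matrix factorization gives a genuinely $2$-periodic complex, for which the statement is trivial (or can be handled by the $c>1$ argument after padding, though it is cleaner to dispatch it directly).

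The main obstacle, such as it is, is purely organizational rather than mathematical: getting the index correspondence between the homological grading on $\Phi(F,d)$ and the internal $\tS$-grading on $(F,d)$ exactly right, including the interplay with the convention $C[1]_i = C_{i+1}$ fixed in the Notation section and the degree shift built into the definition of $\Phi$ (dualize over $S$, then tensor with $R$). Once that dictionary is pinned down, the theorem is a formal corollary of Proposition~\ref{mfperiodicity}, since passing to $R$ from $\tS$ only ever collapses ideals via a surjection and never affects which minors vanish in a way that matters for the stated equalities. I would therefore keep the write-up short: one paragraph establishing the index dictionary and the behavior of $\I^r$ under transpose and base change, and one paragraph translating each half of Proposition~\ref{mfperiodicity}.
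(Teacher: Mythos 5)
Your proof is correct and takes essentially the same route as the paper: translate the homological index $\ell$ on $\Phi(F,d)$ to the internal degree $-\ell+1$ on $(F,d)$, use that transposition and the surjection $S \to R$ preserve the relevant equalities of ideals of minors, and then invoke Proposition~\ref{mfperiodicity} with the parities swapped. The only extraneous material is the aside about $c=1$, which the theorem explicitly excludes by hypothesis.
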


\begin{proof}
Let us prove (1); the proof of (2) is the same. We have $-\ell + 1 \le M_1$, so Proposition~\ref{mfperiodicity}(2) implies that $\I^r(d_{-\ell + 1}) = \I^1(d_{n_1})^r$. Dualizing, we get $\I^r(d^T_{\ell}) = \I^1(d^T_{-n_1 + 1})^r$. This equality still holds upon tensoring the arguments with $R$;  the result follows.
\end{proof}

\begin{ex}
Let $G$ and $\sigma$ be as in Example~\ref{ex:shamash1}, and let $(F, d)$ be the matrix factorization obtained from $(G^\vee, \sigma^\vee)$ as in Example~\ref{ex:mf}. By Proposition~\ref{rem:shamash}, the Shamash resolution $C$ constructed in Example~\ref{ex:shamash1} is given by $\Phi(F, d)$. Let us now analyze the minors of $C$ via Theorem~\ref{shamashperiodicity}.

Fix $r \ge 1$. We have $n_0 = -2$, $N_0 = 0$, and $n_1 = -1 = N_1$; thus, $M_0 = -6(r-1)-2$, and $M_1 = -4(r-1)-1$. Theorem~\ref{shamashperiodicity} therefore implies that the ideals of minors in odd (resp. even) homological degrees in $C$ become 2-periodic starting---at most---at position $6(r - 1) + 3$ (resp. $4(r - 1) + 2$). More specifically: the ideals of $r \times r$ minors in odd (resp. even) homological degrees stabilize to $(x, y, z,w)^r$ at position $6(r-1) + 3$ (resp. $4(r-1) + 2$). When $r = 1$, these bounds are sharp. The bounds are not sharp for higher ranks; a reason for this is that the bounds for periodicity we obtain in Proposition~\ref{mfperiodicity} are for ideals of minors in the ring $S$; modding out by $f_1, \dots, f_c$ often causes additional vanishing of minors.
\end{ex}

Assume now that $S$ is a regular local ring, and $f_1, \dots, f_c$ is a regular sequence in $S$. Let $M$ be a finitely generated $R$-module and $G$ its minimal $S$-free resolution. By Proposition~\ref{higher_homotopies}, we may equip $G$ with a system of higher homotopies $\sigma$ for $f_1, \dots, f_c$. As discussed above Proposition~\ref{rem:shamash}, dualizing yields a system of higher homotopies $\sigma^\vee$ on $G^\vee$. By Proposition~\ref{rem:shamash}, applying the construction in Example~\ref{ex:mf} to $(G^\vee, \sigma^\vee)$ yields a matrix factorization $(F, d)$ such that $\Phi(F, d)$ is the Shamash resolution $\sh(G, \sigma)$ of $M$. The numbers $N_0$ and $n_0$ (resp. $N_1$ and $n_1$) from Proposition~\ref{mfperiodicity} in this case are precisely the smallest and largest even (resp. odd) nonvanishing homological degrees of $G$. In particular, we have $0 \le -N_i \le -n_i \le \pd_S(M)$ for $i = 0, 1$, and so Theorem~\ref{shamashperiodicity} implies:

\begin{thm}
\label{thm:cishamash}
Let $S$ be a regular local ring, $f_1, \dots, f_c$ a regular sequence in $S$ with $c > 1$, $M$ a finitely generated $R = S/(f_1, \dots, f_c)$-module, $k$ the residue field of $R$, and $\sh(G, \sigma)$ a Shamash resolution of $M$. Fix $r \ge 1$. We have:
\begin{enumerate}
\item $
\I^r_\ell(\sh(G, \sigma)) = \I^r_{\ell+2}(\sh(G, \sigma))
$
for all $\ell \ge r \cdot \pd_S(M) + 4(r-1) + 1.$
\item $
\I^r_\ell(k) = \I^r_{\ell+2}(k)
$
for all $\ell \ge r \cdot c + 4(r-1) + 1$.
\end{enumerate}
\end{thm}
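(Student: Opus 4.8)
The plan is to derive both parts from Theorem~\ref{shamashperiodicity}, which controls the ideals of minors of an arbitrary Shamash complex $\Phi(F,d)$, by exhibiting the complex in question as such and then reading off the parameters $n_i,M_i$ from Proposition~\ref{mfperiodicity}. For part~(1): take $G$ to be the minimal $S$-free resolution of $M$, equip it (Proposition~\ref{higher_homotopies}) with a system of higher homotopies $\sigma$ for $f_1,\dots,f_c$, and let $(F,d)\in\mf(\tS,\tf)$ be the matrix factorization associated to $(G^\vee,\sigma^\vee)$ as in Example~\ref{ex:mf}; by Proposition~\ref{rem:shamash} we have $\Phi(F,d)=\sh(G,\sigma)$. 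Since $G$ lives in homological degrees $0,\dots,\pd_S(M)$, the graded module $F$ is generated in degrees $-\pd_S(M),\dots,0$, so in the notation of Proposition~\ref{mfperiodicity} we have $N_0=0$, $N_1=-1$, and $-\pd_S(M)\le n_i\le N_i$; hence $N_i-n_i\le\pd_S(M)$ and $-n_i\le\pd_S(M)$. Substituting into $M_i=-(r-1)(N_i-n_i+4)+n_i$ gives
\[
-M_i+1=(r-1)(N_i-n_i)+4(r-1)-n_i+1\le r\,\pd_S(M)+4(r-1)+1\qquad(i=0,1),
\]
so by Theorem~\ref{shamashperiodicity}, for every $\ell\ge r\,\pd_S(M)+4(r-1)+1$ the ideal $\I^r_\ell(\sh(G,\sigma))$ equals a fixed ideal depending only on the parity of $\ell$, and in particular equals $\I^r_{\ell+2}(\sh(G,\sigma))$.

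For part~(2): by the theorems of Tate and Gulliksen, the minimal $R$-free resolution of $k$ is itself a Shamash resolution, so applying part~(1) with $M=k$ immediately yields $2$-periodicity for $\ell\ge r\,\dim(S)+4(r-1)+1$ (here $\pd_S(k)=\dim S$). To improve the starting index to $rc+4(r-1)+1$, I would work directly with the matrix factorization $(F,d)$ of $k$ — where $G$ is the Koszul complex on a minimal generating set of $\m$ — and replace the crude threshold $n_i$ used in Proposition~\ref{mfperiodicity} by the first degree at which the $1\times1$-minor ideals $\I^1(d_j)$ of $(F,d)$ become constant. Since the Koszul differential of $G$ already produces all of $\m$, these ideals stabilize (to $\m$) from a degree near $0$ onward rather than from $-\dim(S)$ onward; substituting this far larger threshold into Lemma~\ref{lem:technical} and bounding $\theta_F$ there in terms of $c$ produces the bound $rc+4(r-1)+1$.

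Part~(1) should be essentially routine once Theorem~\ref{shamashperiodicity} and Proposition~\ref{rem:shamash} are in hand. The real work is the refinement in part~(2): the hard part will be to determine precisely the homological degree at which the $1\times1$-minor ideals of the matrix factorization of $k$ stabilize and to control $\theta_F$ there, so that the number $c$ of defining equations — rather than $\pd_S(k)=\dim(S)$ — governs the onset of periodicity.
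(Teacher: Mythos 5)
Your argument for Part~(1) is correct and coincides with the paper's: form the matrix factorization $(F,d)$ from $(G^\vee,\sigma^\vee)$, observe $N_0=0$, $N_1=-1$, and $0 \le -N_i \le -n_i \le \pd_S(M)$, then feed these into Theorem~\ref{shamashperiodicity}. This is exactly what the paper does.

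For Part~(2), however, you have put your finger on a genuine discrepancy, and you should be careful not to chase it in the wrong direction. You are right that applying Part~(1) to $M=k$ gives the bound $r\cdot\pd_S(k)+4(r-1)+1 = r\cdot\dim(S)+4(r-1)+1$, \emph{not} $r\cdot c+4(r-1)+1$. But the paper's own one-sentence proof of Part~(2) is precisely this argument: it only records that the minimal $R$-free resolution of $k$ is a Shamash resolution (Tate--Gulliksen) and invokes Part~(1). That yields $\dim(S)$, not $c$. Moreover, the introductory Theorem~\ref{intro1}(2) and Corollary~\ref{cor:easy} both state the Shamash bound (including for $k$) with $\dim(S)$. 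The most consistent reading is that the $c$ in Part~(2) is an error for $\dim(S)$, in which case Part~(2) is literally the $M=k$ instance of Part~(1) and your first paragraph already proves it. Your instinct that the statement, as printed, is not justified by the paper's stated proof is correct; but the fix is probably to the statement, not the proof.

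As for your proposed refinement: the idea of replacing the crude threshold $n_0,n_1$ of Proposition~\ref{mfperiodicity} by the largest $n$ at which $\I^1(d_i)$ stabilizes is legitimate --- Lemma~\ref{lem:technical} already allows any such $n$. And your observation that for $k$ (with $G$ the Koszul complex, $\sigma_{e_i}$ multiplication by a preimage $\xi_i$ of $f_i$, and $\sigma_{\mathbf a}=0$ for $|\mathbf a|\ge 2$) the $1\times 1$-minor ideals $\I^1(d_i)$ already equal the maximal ideal of $S$ for every $i\le 0$ is correct, so one could take $n=0$ and then $\theta_{F_{\even}}(0)=\theta_{F_{\odd}}(-1)=0$. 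But if you actually substitute these values into Lemma~\ref{lem:technical}, the bound you obtain is on the order of $4r-2$, independent of $c$ and of $\dim(S)$; it does not come out to $rc+4(r-1)+1$. So the sketch, as written, neither matches the printed bound nor the $\dim(S)$ bound, and should not be presented as a route to $rc+4(r-1)+1$ without first settling what the correct statement is.
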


\begin{proof}
Part (1) is a consequence of Theorem~\ref{shamashperiodicity} and the inequalities $0 \le -N_i \le -n_i \le \pd_S(M)$. Part (2) follows since the minimal $R$-free resolution of $k$ is given by the Shamash construction \cite{gulliksen, Tate_57}. 
\end{proof}

\begin{remark}
\label{rem:necessary}
The bounds obtained in Theorem~\ref{thm:cishamash} depend (linearly) on the rank of the minors; we note that such dependence is necessary. For instance, in the setting of the Theorem, let $F$ be the minimal free resolution of the residue field $k$. Notice that $\I^r_\ell(k) = 0$ when $\on{rank} F_\ell < r$. However, by Theorem~\ref{residue_field_stable_ideals}(2), for any $r > 0$, there exists $\ell \gg 0$ such that $\I^r_\ell(k) \ne 0$. Thus, the point at which the ideals $\I^r_\ell(k)$ stabilize depends on $r$.
\par Even if the higher rank minors do not vanish, they may not stabilize when the lower rank minors do. For example, in \Cref{ex:shamash1}, we have $\I^1_i(\sh(G,\sigma))=\I^1_2(\sh(G,\sigma))$ for all even integers $i\geq 2$, but $0\neq \I^2_2(\sh(G,\sigma))\neq \I^2_4(\sh(G,\sigma))$.
\end{remark}

As a consequence, we obtain a uniform upper bound (i.e. independent of the choice of module) for where the periodicity of the ideals of minors of a Shamash resolution begins:
\begin{cor}
\label{cor:easy}
In the setting of Theorem~\ref{thm:cishamash}, we have $
\I^r_\ell(\sh(G, \sigma)) = \I^r_{\ell+2}(\sh(G, \sigma))
$
for $\ell \ge r \cdot \dim(S) + 4(r-1) + 1.$
\end{cor}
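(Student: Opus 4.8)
The plan is to obtain Corollary~\ref{cor:easy} as an immediate consequence of Theorem~\ref{thm:cishamash}(1) together with a single classical input. Recall that the complex $G$ appearing in Theorem~\ref{thm:cishamash} is the minimal $S$-free resolution of $M$, so its length equals $\pd_S(M)$. The one fact I would invoke is the Auslander--Buchsbaum--Serre theorem: since $S$ is a regular local ring, its global dimension equals $\dim(S)$, and therefore $\pd_S(M) \le \dim(S)$ for every finitely generated $S$-module $M$ (in particular for $M$ regarded as an $S$-module via the surjection $S \onto R$). Consequently
$$
r\cdot \pd_S(M) + 4(r-1) + 1 \;\le\; r\cdot \dim(S) + 4(r-1) + 1,
$$
so any $\ell$ with $\ell \ge r\cdot\dim(S) + 4(r-1) + 1$ automatically satisfies the hypothesis $\ell \ge r\cdot\pd_S(M) + 4(r-1) + 1$ of Theorem~\ref{thm:cishamash}(1). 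Applying that theorem gives $\I^r_\ell(\sh(G,\sigma)) = \I^r_{\ell+2}(\sh(G,\sigma))$ for all such $\ell$, which is exactly the claim.

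I expect no real obstacle here: all the substantive work has already been carried out upstream, first in Lemma~\ref{lem:technical} and Proposition~\ref{mfperiodicity}, where the periodicity thresholds are read off from the $\theta$-invariants of the matrix-factorization model, then in Theorem~\ref{shamashperiodicity} and Theorem~\ref{thm:cishamash}(1), where those invariants are repackaged in terms of $\pd_S(M)$ using the identification $\Phi(F,d) = \sh(G,\sigma)$ from Proposition~\ref{rem:shamash}. The only conceptual point worth flagging in the write-up is the nature of the improvement: Theorem~\ref{thm:cishamash}(1) has a module-dependent bound $r\cdot\pd_S(M)+4(r-1)+1$, whereas replacing $\pd_S(M)$ by its universal upper bound $\dim(S)$ yields a threshold depending only on the ambient regular ring $S$ and the rank $r$ of the minors. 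This is precisely what makes Corollary~\ref{cor:easy} uniform over all modules, and taking $r=1$ it recovers Eisenbud's bound $\dim(S)+1$ for the onset of $2$-periodicity of minimal free resolutions over hypersurfaces.
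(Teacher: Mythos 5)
Your argument is correct and is exactly the paper's implicit proof: Corollary~\ref{cor:easy} follows from Theorem~\ref{thm:cishamash}(1) by replacing $\pd_S(M)$ with its universal upper bound $\dim(S)$, valid by the Auslander--Buchsbaum--Serre theorem since $S$ is regular local. One minor caveat in your closing remark: Corollary~\ref{cor:easy} inherits the hypothesis $c>1$ from Theorem~\ref{thm:cishamash}, so the $r=1$ case does not by itself recover the hypersurface ($c=1$) bound; the paper handles that separately in Corollary~\ref{cor:eisenbud} by invoking Eisenbud's theorem directly.
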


A famous result of Eisenbud \cite[Theorem 6.1(i)]{Eisenbud_1980} states that minimal free resolutions of finitely generated modules over local hypersurface rings become 2-periodic after at most $\dim(Q) + 1$ steps, where $Q$ is the ambient regular ring. Since minimal free resolutions over hypersurfaces are given by the Shamash construction, the following Corollary extends this result, on the level of rank 1 minors, to complete intersections:

\begin{cor}
\label{cor:eisenbud}
Let $S$ be a regular local ring, $f_1, \dots, f_c$ a regular sequence in $S$, $M$ a finitely generated $R = S/(f_1, \dots, f_c)$-module, and $\sh(G, \sigma)$ a Shamash resolution of $M$. We have
$
\I^1_\ell(\sh(G, \sigma)) = \I^1_{\ell+2}(\sh(G, \sigma))
$
for all $\ell \ge \dim(S) + 1.$ 
\end{cor}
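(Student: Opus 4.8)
The plan is to split on the value of $c$. If $c \ge 2$, there is nothing to prove: the statement is precisely Corollary~\ref{cor:easy} specialized to $r = 1$, since $1\cdot\dim(S) + 4(1-1) + 1 = \dim(S) + 1$. If $c = 0$, then $R = S$ and $\sh(G,\sigma)$ is just the finite complex $G$, supported in homological degrees $0,\dots,\pd_S(M)$; by our conventions $\I^1_\ell(\sh(G,\sigma)) = R$ for all $\ell \ge \pd_S(M)+1$, and Auslander--Buchsbaum gives $\pd_S(M) \le \dim(S)$, so the conclusion holds (with $\dim(S)+1$ to spare). Thus the only case requiring work is $c = 1$, i.e. $R = S/(f)$.

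For $c = 1$, assume $M$ is non-free (otherwise $\sh(G,\sigma) = G$ and we are in the $c = 0$ situation), and present $\sh(G,\sigma)$ as $\Phi(F,d)$, where $(F,d) \in \mf(\tS,\tf)$ is the graded matrix factorization of $\tf = ft_1$ obtained from $(G^\vee,\sigma^\vee)$ as in Example~\ref{ex:mf}; by the discussion preceding Theorem~\ref{thm:cishamash}, $F$ is generated in homological degrees lying in $[-\pd_S(M),\,0]$. Equip each graded piece $F_i$ with its $S$-basis of monomials $t_1^{a}\otimes g$ ($g$ ranging over a basis of $G$). Since $\tS = S[t_1]$ has a single variable and $d$ is $\tS$-linear, multiplication by $t_1$ is an injective $S$-linear map $F_i \hookrightarrow F_{i-2}$ that carries monomial basis elements to monomial basis elements and commutes with $d$; the point I would exploit is that once $i \le -\pd_S(M)+1$ --- so that $i-2$ and $i-3$ lie below the generation range of $F$ --- \emph{every} monomial basis element of $F_{i-2}$ and of $F_{i-3}$ is divisible by $t_1$, whence this map is a bijection in those degrees. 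Consequently, for all $i \le -\pd_S(M)+1$ the matrix of $d_{i-2} : F_{i-2}\to F_{i-3}$ is obtained from that of $d_i : F_i \to F_{i-1}$ by a relabeling of rows and columns, so $\I^r(d_i) = \I^r(d_{i-2})$ for every $r\ge 1$.

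To finish, I would use that $\Phi$ dualizes over $S$ and then tensors with $R$, so that the $\ell$-th differential of $\Phi(F,d) = \sh(G,\sigma)$ is the transpose of $d_{-\ell+1}$ reduced modulo $f$; hence $\I^1_\ell(\sh(G,\sigma))$ is the image in $R$ of $\I^1(d_{-\ell+1})$. For $\ell \ge \pd_S(M)$ one has $-\ell+1 \le -\pd_S(M)+1$, so the previous paragraph gives $\I^1(d_{-\ell+1}) = \I^1(d_{-\ell-1})$ and therefore $\I^1_\ell(\sh(G,\sigma)) = \I^1_{\ell+2}(\sh(G,\sigma))$. Since $\pd_S(M)\le\dim(S)$, this holds in particular for all $\ell \ge \dim(S)+1$, which is the claim.

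The one step I expect to require care is the boundary assertion in the middle paragraph: that once $i$ drops just below where $F$ is generated, multiplication by $t_1$ is genuinely onto $F_{i-2}$ and $F_{i-3}$, so that $d_i$ and $d_{i-2}$ really are the same matrix up to reindexing. This is exactly where the finiteness of $\pd_S(M)$ (bounding the generation range of $F$) and the hypothesis $c=1$ (which makes divisibility by $t_1$ the \emph{only} obstruction to surjectivity) are both indispensable; for $c\ge 2$ this naive relabeling fails, and that is why the $c \ge 2$ case must instead route through the diagonal-submatrix construction of Lemma~\ref{lem:technical}. The remainder is routine bookkeeping with the internal grading on $F$.
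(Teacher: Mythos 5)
Your proof is correct. For $c\ge 2$ you invoke Corollary~\ref{cor:easy}, exactly as the paper does, so there is nothing to compare there. The interesting divergence is the $c=1$ case: the paper's one-line proof handles it by citing Eisenbud's periodicity theorem \cite[Theorem~6.1(i)]{Eisenbud_1980}, whereas you give a direct argument entirely inside the graded matrix-factorization picture, observing that once the internal degree drops past the generation range $[-\pd_S(M),0]$ of $F$, multiplication by $t_1$ becomes an $S$-linear bijection between consecutive even (and odd) graded pieces that matches monomial bases, so $d_i$ and $d_{i-2}$ are literally the same matrix up to relabeling. This is the natural $c=1$ degeneration of the diagonal-submatrix argument in Lemma~\ref{lem:technical} (which the paper only states for $c\ge 2$), and it buys you something the paper's citation does not quite deliver: Eisenbud's theorem is a statement about \emph{minimal} resolutions over a hypersurface, while the corollary as stated allows an arbitrary Shamash resolution $\sh(G,\sigma)$, which need not be minimal when $\sigma$ is not minimal; your structural argument covers that case at no extra cost, and in fact yields the slightly sharper bound $\ell \ge \pd_S(M)$ in the hypersurface case. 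Your observation that the relabeling trick fails for $c\ge 2$ because $t_1$-divisibility is no longer the only obstruction to surjectivity, which is why Lemma~\ref{lem:technical} must work harder with a mixed $t_1,t_2$ diagonal, is also exactly the right diagnosis. The $c=0$ case is handled by your convention check and Auslander--Buchsbaum, which is fine though the paper likely implicitly assumes $c\ge 1$.
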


\begin{proof}
Immediate from Corollary~\ref{cor:easy} and \cite[Theorem 6.1(i)]{Eisenbud_1980}. 
\end{proof}

We conclude this subsection with another periodicity result for Shamash complexes that will be useful in \S\ref{periodicity_ideal_of_minors}.

\begin{prop}
\label{prop:shamashc1}
Let $S$ be a commutative ring, $f \in S$, $R = S/(f)$, $G$ a complex of finitely generated free $R$-modules, $\sigma$ a system of higher homotopies on $G$ for $f$, and $F = \sh(G, \sigma)$. Assume $R$ is Noetherian. For $r > 0$, we have $\I^r_k(F) = \I^r_{k+2}(F)$ for $k \gg 0$.
\end{prop}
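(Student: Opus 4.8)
The plan is to produce, for every $k$, a containment $\I^r_k(F)\subseteq\I^r_{k+2}(F)$ coming purely from the shape of the Shamash complex, and then to use the Noetherian hypothesis to turn an eventually ascending chain of ideals into an eventually constant one.

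First I would unwind Construction~\ref{shamash_construction} in the case $c=1$. Here $\tD=\bigoplus_{n\ge 0}Sy_1^{(n)}$ with $\deg y_1^{(n)}=2n$, the variable $t_1$ acts on $\tD$ by the contraction $t_1\cdot y_1^{(n)}=y_1^{(n-1)}$, and $F=\sh(G,\sigma)$ has underlying module $\tD\otimes_SG$, so that $F_k=\bigoplus_{n\ge 0}y_1^{(n)}\otimes G_{k-2n}$ with differential $d^F=\sum_{a\ge 0}t_1^{\,a}\otimes\sigma_{ae_1}$. Two structural features drive the argument: (i) the summand $y_1^{(0)}\otimes G\subseteq F$ is a subcomplex isomorphic to $G$, since only $\sigma_0=d^G$ survives on it; and (ii) the Eisenbud operator $t_1\otimes\id\colon F\to F$ is a degree $-2$ chain map which is surjective with kernel exactly that subcomplex. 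Together these give a degreewise split short exact sequence of complexes
$$0\longrightarrow G\longrightarrow F\xrightarrow{\;t_1\otimes\id\;}F[-2]\longrightarrow 0,$$
where one checks directly that contraction by $t_1$ identifies $F/G$ with $F[-2]$ as complexes (not merely in $\on{D}(R)$).

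Next, fixing degreewise splittings compatible with this sequence, the differential of $F$ acquires the block-triangular form $d^F_{k+2}=\left(\begin{smallmatrix}d^G&h\\ 0&d^F_k\end{smallmatrix}\right)$ with respect to $F_{k+2}=G_{k+2}\oplus F_k$ and $F_{k+1}=G_{k+1}\oplus F_{k-1}$; the lower-right block is the degree-$k$ differential of $F$ because $F[-2]$ in homological degree $k+2$ is $F_k$. Since $d^F_k$ thus sits inside $d^F_{k+2}$ as a submatrix with a zero block beneath it, every $r\times r$ minor of $d^F_k$ is also an $r\times r$ minor of $d^F_{k+2}$, i.e. $\I^r_k(F)\subseteq\I^r_{k+2}(F)$ for all $r\ge 1$ and all $k$. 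Finally, within each parity class this yields an ascending chain of ideals in the Noetherian ring $R$, which must stabilize; hence $\I^r_k(F)=\I^r_{k+2}(F)$ for $k\gg 0$.

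There is no serious obstacle here, but two points merit care. The homological bookkeeping in the Shamash differential must be set up precisely enough that the short exact sequence above — and in particular the shape $\left(\begin{smallmatrix}d^G&h\\0&d^F_k\end{smallmatrix}\right)$, with the all-important zero in the lower-left — is transparent. And the Noetherian hypothesis is genuinely needed: when $G$ is unbounded above the ranks of the $F_k$ grow without bound, so $F$ is \emph{not} eventually $2$-periodic on the nose, and a priori one has only the inclusions, not equalities, before stabilization. (When $G$ happens to be bounded above, the extra row and column appearing in $d^F_{k+2}$ vanish once $k>\sup(G)$ and one gets strict periodicity with an explicit starting degree, matching the hypersurface picture recorded in Corollary~\ref{cor:eisenbud}.)
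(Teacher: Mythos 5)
Your argument is correct and matches the paper's proof: both identify $d^F_k$ as a block of $d^F_{k+2}$ via the decomposition $F_{k+2}\cong G_{k+2}\oplus F_k$ (the paper writes $yF_k\oplus G_{k+2}$), conclude $\I^r_k(F)\subseteq\I^r_{k+2}(F)$ for all $k$, and invoke Noetherianity of $R$. One small remark: the zero block you flag as ``all-important'' is not actually used for the minor containment---if $B$ is any submatrix of $A$ (obtained by selecting rows and columns) then $\I^r(B)\subseteq\I^r(A)$ regardless of what the other blocks look like---though the zero is of course what makes $0\to G\to F\to F[-2]\to 0$ a short exact sequence of \emph{complexes}, which is a nice conceptual gloss the paper does not spell out.
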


\begin{proof}
Let $\widetilde{S} = S[t]$, with $t$ a degree $-2$ variable. We have $F = \widetilde{D} \otimes_S G \otimes_S R$, where $\widetilde{D}$ is the divided power algebra $\Hom_S(\widetilde{S}, S)$, and the differential $d$ on $F$ is given by $\sum_{i \ge 0} t^i \otimes \sigma_i \otimes \id$. Let $y \in \widetilde{D}$ denote the dual of $t$. For all $k \in \Z$, $F_{k+2}$ decomposes as $yF_{k} \oplus G_{k+2}$. The differential $d_{k+2}$ is a square matrix with respect to these decompositions, with top-left entry $d_{k} \: yF_k \to yF_{k-1}$.
Thus, $\I^r_k(F) \subseteq \I^r_{k+2}(F)$ for all $k \in \Z$. The Noetherianity of $R$ implies the result.
\end{proof}

\subsection{Periodicity of ideals of minors of minimal free resolutions over complete intersections}\label{periodicity_ideal_of_minors}

Corollary~\ref{cor:easy} gives a tidy description of the periodic behavior of the ideals of $r \times r$ minors of a Shamash resolution for a module $M$ over a complete intersection: there is a upper bound---independent of $M$---for when the periodicity begins, and this bound depends linearly on $r$. Moreover, when $r = 1$, this phenomenon is an extension of the familiar periodic behavior of minimal free resolutions over hypersurfaces (Corollary~\ref{cor:eisenbud}). 

One might expect that the situation is the same for \emph{minimal} free resolutions over complete intersections, but the story turns out to be more complicated. On one hand, we prove in Theorem~\ref{CI_periodicity} that the ideals of minors of minimal free resolutions over complete intersections are indeed eventually 2-periodic. On the other hand, there is no bound on where the periodicity starts that is independent of the module (Example~\ref{uniform_bound_example}); in fact, we do not obtain any upper bound at all on where periodicity begins. 

Our main tool for establishing the eventual periodicity of ideals of minors of minimal free resolutions over complete intersections (Theorem~\ref{CI_periodicity}) is Eisenbud-Peeva's theory of higher matrix factorizations, as introduced in their book \cite{EP}. The definition of a higher matrix factorization, and the sense in which higher matrix factorizations govern the asymptotic behavior of minimal free resolutions over complete intersections, is quite intricate; but the details will not be necessary for this paper. The key point is that, as discussed in \cite[Construction 5.1.1]{EP}, the minimal free resolution of a higher matrix factorization module over a complete intersection of codimension $c$ is given by a Shamash construction associated to a single element: this allows us to apply Proposition~\ref{prop:shamashc1}. 

With this in mind, let us now prove our periodicity result for ideals of minors of minimal free resolutions over complete intersections. In fact, we work in slightly greater generality:

\begin{thm}\label{CI_periodicity}
Let $S$ be a local ring, $f_1, \dots, f_c \in S$ a regular sequence, $R$ the quotient $S/(f_1,\dots,f_c)$, and $M$ a finitely generated $R$-module such that $\pd_S(M) < \infty$. Given $r>0$, we have $\I^r_k(M) = \I^r_{k + 2}(M)$ for $k\gg 0$.
\end{thm}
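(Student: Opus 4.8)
The plan is to reduce the statement about an arbitrary finitely generated $R$-module $M$ with $\pd_S(M) < \infty$ to the case of a higher matrix factorization module, and then apply the results on Shamash complexes established above. First I would recall from Eisenbud--Peeva's theory \cite{EP} that, after possibly passing to a higher syzygy, $M$ (or more precisely a sufficiently high syzygy $N = \syz^R_m(M)$ for $m \gg 0$) is the module of a higher matrix factorization with respect to $f_1, \dots, f_c$. Since replacing $M$ by $\syz^R_m(M)$ only shifts the indexing of the differentials in the minimal free resolution, the ideals $\I^r_k(M)$ for $k \gg 0$ agree with the ideals $\I^r_{k-m}(N)$ for $k \gg 0$; thus it suffices to prove the periodicity statement for $N$.

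Next I would invoke \cite[Construction 5.1.1]{EP}: the minimal $R$-free resolution of a higher matrix factorization module over the complete intersection $R = S/(f_1, \dots, f_c)$ is obtained as a Shamash construction $\sh(G, \sigma)$ associated to a \emph{single} element --- namely, the resolution is built over the hypersurface $S/(f)$ for an appropriate $f$ (a generic combination, or more precisely the construction identifies the relevant $S' = S[\text{new variables}]$ and single hypersurface equation). The key structural point is that, although $c$ may be larger than $1$, the minimal resolution of the HMF module is still of the form $\sh(G,\sigma)$ for a system of higher homotopies for a \emph{single} element. This is precisely the hypothesis of Proposition~\ref{prop:shamashc1}, which gives $\I^r_k(F) = \I^r_{k+2}(F)$ for $k \gg 0$ once we know $R$ (equivalently, the relevant base) is Noetherian --- which holds since $S$ is Noetherian local.

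Putting these together: write $F = \sh(G, \sigma)$ for the minimal free resolution of $N$, a single-element Shamash construction by \cite[Construction 5.1.1]{EP}; apply Proposition~\ref{prop:shamashc1} to conclude $\I^r_k(F) = \I^r_{k+2}(F)$ for $k \gg 0$; and then translate back to $M$ via the syzygy shift. I expect the main obstacle to be the careful bookkeeping in the reduction to a higher matrix factorization module: one must make sure that (i) some high syzygy of $M$ genuinely is a HMF module in the sense of \cite{EP} (this requires $\pd_S(M) < \infty$, and is where the regular-sequence hypothesis and the finiteness of $\pd_S$ are used), and (ii) the single-element Shamash description in \cite[Construction 5.1.1]{EP} applies on the nose to the \emph{minimal} resolution, so that Proposition~\ref{prop:shamashc1} --- which is stated for $\sh(G,\sigma)$ with $G$ a complex of free $R$-modules and $\sigma$ a system of higher homotopies for one element --- can be invoked directly. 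Everything else is formal: the passage between $M$ and its syzygies only reindexes differentials, and Noetherianity is automatic.
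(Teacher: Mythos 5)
Your proposal follows essentially the same route as the paper's proof: pass to a high syzygy that is a higher matrix factorization module via \cite{EP}, identify its minimal free resolution as a Shamash construction on a single element via \cite[Construction 5.1.1]{EP}, and apply Proposition~\ref{prop:shamashc1}. Two small descriptive slips worth fixing: the high syzygy is an HMF module only after replacing $f_1,\dots,f_c$ by a new generating set $f_1',\dots,f_c'$ of the same ideal (this is \cite[Corollary 6.4.3]{EP}, and the paper explicitly performs this replacement), and \cite[Construction 5.1.1]{EP} applies the Shamash construction for $f_c$ over $S/(f_1,\dots,f_{c-1})$ rather than over a polynomial extension $S[\text{new variables}]$ with a generic combination.
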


\begin{proof}
By \cite[Corollary 6.4.3]{EP}, a sufficiently high syzygy of $M$ is a minimal higher matrix factorization (HMF) module with respect to some choice of generators $f_1',\dots,f_c'$ of the ideal $(f_1,\dots,f_c)$. Thus, truncating and replacing each $f_i$ with $f_i'$, we may assume $M$ is a minimal HMF module for $f_1, \dots, f_c$. By \cite[Construction 5.1.1]{EP}, the minimal free resolution of $M$ is given by applying a Shamash construction on $f_c$ to the minimal free resolution of $M$ over $S/(f_1, \dots, f_{c-1})$. Applying Proposition~\ref{prop:shamashc1} finishes the proof.
\end{proof}

\begin{remark}
\label{ciremarks}
By the faithful flatness of completion, \Cref{CI_periodicity} holds for modules over a ring $R$ whose completion at its maximal ideal is isomorphic to a regular local ring modulo a regular sequence. This hypothesis is indeed weaker than assuming $R$ itself is a quotient of a regular local ring by a regular sequence: see \cite{HJ}.

Additionally, the statement of \Cref{CI_periodicity} holds when $S$ is a regular local ring and $M$ is a bounded below complex of finitely generated free $R$-modules with homology concentrated only in finitely many degrees. Indeed, a smart truncation of the minimal free resolution of such a complex is a minimal free resolution of a module, and so one immediately reduces to the setting of \Cref{CI_periodicity}.
\end{remark}

\begin{rem}\label{ideal_of_minors_containment}
    The proofs of Proposition~\ref{prop:shamashc1} and Theorem~\ref{CI_periodicity} show a bit more. Let $M$ be a finitely generated module over a local complete intersection. As discussed in the proof of Theorem~\ref{CI_periodicity}, it follows from results of \cite{EP} that we may choose $i \gg 0$ such that $\syz_i(M)$ is a higher matrix factorization module associated to some regular sequence and is therefore resolved by a Shamash construction on a single element. It therefore follows from the proof of Proposition~\ref{prop:shamashc1} that $\I^r_j(M) \subseteq \I^r_{j+2}(M)$ for all $r \ge 1$ and $j \ge i$.
\end{rem}

The following example shows that, in the setup of Theorem~\ref{CI_periodicity}, there is no bound on where the periodicity begins that is independent of $M$.

\begin{ex}\label{uniform_bound_example}
Let $(S,\m,k)$ be a regular local ring, $R = S/I$ an Artinian complete intersection of codimension at least two, and $F$ the minimal $R$-free resolution of $k$. Let $r$ be an integer greater than $\beta_3(k)$, the third Betti number of $k$. We first show that there is some $\ell > 0$ such that $\I^r_\ell(k) \ne \I^r_{\ell+2}(k)$.
Notice that $\I^r_2(k) = \I^r_1(k) = 0$. On the other hand, by Theorem~\ref{residue_field_stable_ideals}, $\istab(k, r) = \m^r$, and so there is some $i$ such that $\I^r_i(k) \ne 0$. The existence of $\ell > 0$ with the desired property follows.

Let $F^\vee$ denote the complex $\Hom_R(F, R)$. Since $R$ is self-injective, $F^\vee$ is a projective co-resolution of $k^\vee \cong k$. The mapping cone $C$ of the composition $F \onto k \cong k^\vee \into F^\vee$ is therefore an exact, minimal complex of finitely generated free $R$-modules. (The complex $C$ is called a \emph{complete resolution} of $k$ \cite{buchweitz}.)
Choosing $\ell$ as in the preceding paragraph, we see that taking a smart truncation of $C$ in arbitrarily negative degree gives a minimal free resolution of an $R$-module whose ideals of minors are not equal in arbitrarily large even/odd degrees.
\end{ex}

Let $M$ be a finitely generated module over a local ring. The \emph{$j^{\th}$ Fitting ideal of $M$ in homological degree $i$} is the ideal $\mathbf{F}^j_i(M) \coloneqq \I^{\beta_{i-1}-j+1}_i(M)$, where $\beta_\ell$ denotes the $\ell^{\th}$ Betti number of $M$. When $i = 1$, this recovers the definition of the $j^{\th}$ Fitting ideal of $M$ in \cite{BE_77}. Fitting ideals in large homological degrees over complete intersections satisfy a containment, up to radical, in the opposite direction of the containment in \Cref{ideal_of_minors_containment}:

\begin{prop}
    Let $S$ be a regular local ring with infinite residue field, $f_1,\dots,f_c\in S$ a regular sequence, and $R=S/(f_1,\dots,f_c)$. We have $\on{rad}(\mathbf{F}^j_{i+2}(M))\subseteq \on{rad}(\mathbf{F}^j_{i}(M))$ for all $j\ge 1$ and $i\gg 0$, where $\on{rad}( - )$ denotes the radical.
\end{prop}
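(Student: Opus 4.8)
The plan is to relate Fitting ideals in large homological degrees to the support of syzygy modules, and then exploit the periodicity of these modules in the derived/stable category over a complete intersection. Recall that for a finitely generated module $N$ with minimal free presentation $R^{\beta_1} \xrightarrow{d} R^{\beta_0} \to N \to 0$, the radical of the $j^{\th}$ Fitting ideal $\mathbf{F}^j_1(N) = \I^{\beta_0 - j + 1}(d)$ cuts out, set-theoretically, the locus where $N$ needs at least $j$ generators, i.e. $V(\mathbf{F}^j_1(N)) = \{\p : \mu_{R_\p}(N_\p) \ge j\}$. Applying this with $N = \syz_{i-1}(M)$, whose minimal presentation matrix is $d_i$, we get $\on{rad}(\mathbf{F}^j_i(M)) = \on{rad}(\mathbf{F}^j_1(\syz_{i-1}(M)))$, so that $V(\mathbf{F}^j_i(M))$ is the locus of primes $\p$ at which $\syz_{i-1}(M)_\p$ requires at least $j$ generators.

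The key point is then to show that, for $i \gg 0$, this locus only grows (as $i$ decreases by $2$), i.e. $V(\mathbf{F}^j_{i}(M)) \subseteq V(\mathbf{F}^j_{i+2}(M))$, which is exactly the desired radical containment in the opposite direction to Remark~\ref{ideal_of_minors_containment}. First I would reduce, as in the proof of Theorem~\ref{CI_periodicity}, to the case where $M$ is a minimal higher matrix factorization module, so that a smart truncation of its minimal resolution is a Shamash construction on a single element $f_c$ over $S' = S/(f_1,\dots,f_{c-1})$. In the proof of Proposition~\ref{prop:shamashc1}, the decomposition $F_{k+2} = yF_k \oplus G_{k+2}$ exhibits $F_k$ as a direct summand of $F_{k+2}$ compatibly with the differentials; dualizing the inclusion of complexes $F_{\ge k+2} \hookrightarrow F_{\ge k}$ (or rather tracking syzygy modules), one sees that $\syz_{k+1}(M)$ is a direct summand of $\syz_{k+3}(M)$ for $k \gg 0$ — concretely, the quotient is resolved by the acyclic complex $G \otimes R$ in the relevant range, so the two syzygies agree up to free summands, hence have the same minimal number of generators after localizing at any prime. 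Actually what we need is the reverse: $\syz_{i-1}(M)$ for the \emph{smaller} index should, at each prime $\p$, require at least as many generators as $\syz_{i+1}(M)$. This follows because localizing at $\p$ and passing to the stable category, $\syz_{i+1}(M)_\p$ is (up to free summands) a syzygy of $\syz_{i-1}(M)_\p$, and syzygies can only get "bigger" in the sense relevant here; more precisely the minimal number of generators of a module localized at $\p$ is a non-decreasing function along the syzygy sequence once we are past $\depth R_\p$, because over the local complete intersection $R_\p$ the Betti numbers of any module are eventually non-decreasing (indeed eventually given by a quasi-polynomial with non-negative leading behavior). Combining: $\mu_{R_\p}(\syz_{i-1}(M)_\p) \ge \mu_{R_\p}(\syz_{i+1}(M)_\p)$ for $i \gg 0$ and all $\p$, which gives $V(\mathbf{F}^j_{i+2}(M)) \subseteq V(\mathbf{F}^j_i(M))$ as desired.

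I expect the main obstacle to be making the comparison of generator numbers of consecutive syzygies precise and uniform in the prime $\p$. The issue is that "eventually non-decreasing Betti numbers" over $R_\p$ has a threshold depending on $\p$, whereas we want a single index $i_0$ (independent of $\p$) beyond which the containment of loci holds. The cleanest route is probably to avoid local Betti number asymptotics entirely and instead argue structurally: after truncating so that $M$'s resolution is a Shamash construction on $f_c$, we have for $k \gg 0$ an isomorphism (of complexes, locally everywhere) identifying $\syz_{k+2}(M)$ with the cokernel of a map whose source splits off a copy realizing $\syz_k(M)$ together with a contractible piece, so that $\syz_{k+2}(M) \cong \syz_k(M) \oplus (\text{stuff})$ stably — but we must be careful, since Remark~\ref{ideal_of_minors_containment} already gives $\I^r_j \subseteq \I^r_{j+2}$, which at the level of Fitting ideals says $\mathbf{F}^j_{j} \supseteq$ something shifted in the rank index, not cleanly $\mathbf{F}^j_{j+2}$. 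The resolution is that the \emph{rank} $\beta_{i-1}$ appearing in $\mathbf{F}^j_i = \I^{\beta_{i-1}-j+1}_i$ also changes with $i$; so I would instead phrase the whole argument on the geometric side via minimal numbers of generators of localized syzygies, where the shift in rank is automatically accounted for, and prove the uniform bound by noting that $\supp(M)$ decomposes into finitely many pieces on each of which the complexity of $M_\p$ is constant, reducing to finitely many local statements. The verification that $\mu_{R_\p}$ of the $(i-1)$-st syzygy dominates that of the $(i+1)$-st for $i$ past a uniform bound is the one place requiring genuine care, and I would handle it using that over a local complete intersection the even and odd Betti sequences of any module are each eventually non-decreasing with a bound on the threshold controlled by the complexity and codimension (both bounded uniformly over $\spec R$).
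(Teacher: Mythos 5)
Your instinct to translate the statement into a containment of supports of exterior powers of syzygies is the right one, and it is essentially the paper's starting point. But your execution has two genuine problems.

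First, there is a sign error in the final step. You correctly observe that $\on{rad}(\mathbf{F}^j_{i+2}) \subseteq \on{rad}(\mathbf{F}^j_{i})$ is equivalent to $V(\mathbf{F}^j_i) \subseteq V(\mathbf{F}^j_{i+2})$, i.e., $\{\p : \mu_\p(\syz_{i-1}(M)_\p) \ge j\} \subseteq \{\p : \mu_\p(\syz_{i+1}(M)_\p) \ge j\}$, which requires $\mu_\p(\syz_{i+1}(M)_\p) \ge \mu_\p(\syz_{i-1}(M)_\p)$ for all $\p$. Your justification (``eventually non-decreasing local Betti numbers'') supports exactly this. However, in the ``Combining'' sentence you write the reverse inequality and deduce $V(\mathbf{F}^j_{i+2}) \subseteq V(\mathbf{F}^j_i)$, which is the wrong containment. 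As written, the argument concludes the opposite of the statement.

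Second, and more seriously, the uniformity gap that you yourself flag is not resolved by your proposed fix. The claim that local Betti numbers become non-decreasing past a threshold depending on $R_\p$ does not by itself yield a single $i_0$ (independent of $\p$) working simultaneously at every prime; stratifying $\spec(R)$ by complexity gives infinitely many primes in each stratum and does not control the $\p$-dependent threshold. What is needed is a \emph{global} statement, and this is exactly what the paper uses: by \cite[Proposition~1.1 and Theorem~3.1]{Eisenbud_1980}, over a complete intersection there is a surjection $\syz_{i+2}(M) \onto \syz_i(M)$ for $i \gg 0$, which is a single statement over $R$ rather than a family of local ones. This surjection immediately induces surjections on exterior powers, hence $\ann(\bigwedge^j\syz_{i+2}(M)) \subseteq \ann(\bigwedge^j\syz_{i}(M))$, and one concludes using that $\ann(\bigwedge^j\syz_{i}(M))$ and $\mathbf{F}^j_{i+1}(M)$ agree up to radical \cite[Corollary~1.3]{BE_77}. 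This is a two-line argument that avoids localization entirely. Your attempted structural argument via the Shamash decomposition also does not give what you want: the decomposition of $F_{k+2}$ as $yF_k \oplus G_{k+2}$ makes $d_k$ a block of $d_{k+2}$ (hence the containment of ideals of minors), but it does not make $\syz_k(M)$ a direct summand of $\syz_{k+2}(M)$; the relationship between the syzygy modules is precisely the Eisenbud surjection, not a splitting.
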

\begin{proof}
    We have a surjection $\syz_{i+2}(M)\onto \syz_i(M)$ for $i \gg 0$ by \cite[Proposition 1.1 and Theorem 3.1]{Eisenbud_1980}. These maps extend to surjections $\bigwedge^j \syz_{i+2}(M)\onto \bigwedge^j \syz_i(M)$ for all $j \ge 1$ and $i\gg 0$. By \cite[Corollary 1.3]{BE_77}, the ideals $\ann(\bigwedge^j \syz_i(M))$ and $\textbf{F}^j_{i+1}(M)$ agree up to radical for all $i \ge 0$ and $j \ge 1$. Thus, for $i\gg 0$ and $j \ge 1$, we have:
    $$
    \on{rad}(\textbf{F}^j_{i+2}(M))= \on{rad}\left(\ann(\bigwedge^j \syz_{i+1}(M)) \right) \subseteq \on{rad}\left(\ann(\bigwedge^j \syz_{i-1}(M))\right) = \on{rad}(\textbf{F}^j_{i}(M)).
    $$
\end{proof}

\section{Periodicity of ideals of minors over Golod rings}\label{golod_section}

Our goal in this section is to prove that minimal free resolutions of finitely generated modules over Golod rings have eventually 1- or 2-periodic ideals of minors. We prove this via an explicit calculation, relying on results of Burke \cite{Golod_burke} and Lescot \cite{lescot}.

We fix some notation for this section: let $S$ be a commutative ring, $R$ a cyclic $S$-algebra, and $M$ a finitely generated $R$-module. We choose $S$-free resolutions $(A, d^A) \xra{\simeq} R$ and $(G, d^G) \xra{\simeq} M$, and we assume $A_0 = S$. Let $A_+$ denote the brutal truncation $\bigoplus_{i \ge 1} A_i$.

\subsection{The bar resolution}

We now (roughly) recall the notions of $A_\infty$-algebra and $A_\infty$-module structures. A recollection of the explicit definitions won't be necessary for this paper; we refer the reader to \cite{Golod_burke} for the details.

\begin{defn}
\label{def:ainf}
An \emph{$A_{\infty}$ $S$-algebra structure} on $A$ is given by degree $-1$ $S$-linear maps
$$
m_n:A_+[-1]^{\otimes n}\rightarrow A_+[-1]
$$
for $n \ge 1$ such that $m_1 = d^{A[-1]}|_{A_+[-1]}$ and satisfying the sequence of relations in \cite[Definition 3.1]{Golod_burke}.\footnote{We point out that our shifting convention is different from that of \cite{Golod_burke}: we assume $C[1]_j$ = $C_{j+1}$, while Burke's convention is $C[1]_j = C_{j-1}$.} 
Assuming that $A$ is equipped with an $A_{\infty}$-algebra structure, an \emph{$A_{\infty}$ $A$-module structure} on $G$ is given by degree $-1$ $S$-linear maps
$$
m_n^G:A_+[-1]^{\otimes n-1}\otimes_S G\rightarrow G
$$
for $n \ge 1$ such that $m_1^G = d^G$ and satisfying the relations in \cite[Definition 3.3]{Golod_burke}.
\end{defn}

\begin{prop}[\cite{Golod_burke} Proposition 3.6]
\label{exists}
There exists an $A_\infty$ $S$-algebra structure on $A$ and an $A_\infty$ $A$-module structure on $G$.
\end{prop}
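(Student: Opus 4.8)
The plan is to deduce existence from the Homotopy Transfer Theorem, using the fact that the cyclic algebra $R$ admits a strictly associative DG-algebra model. First, by Tate's construction (successively adjoining exterior and divided-power variables to $S$ to kill the generators of $\ker(S\to R)$ and then the higher cycles) one obtains a DG $S$-algebra $(B,d^B)\xrightarrow{\simeq}R$ that is $S$-free with $B_0=S$; as a DG algebra, $B$ is in particular an $A_\infty$-algebra with vanishing higher products. Since $A$ and $B$ are bounded-below complexes of free $S$-modules resolving $R$, the identity of $R$ lifts to a chain homotopy equivalence between them, and by the usual strictification one obtains a deformation-retract datum over $S$: chain maps $i\colon A\to B$, $p\colon B\to A$ and a degree-$1$ $S$-linear map $h$ on $B$ with $pi=\id_A$ and $\id_B-ip=d^Bh+hd^B$ (one may further arrange $h^2=hi=ph=0$).

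Restricting to the augmentation ideals $A_+\subseteq A$ and $B_+\subseteq B$ and applying the reduced Homotopy Transfer Theorem of Kadeishvili through $(i,p,h)$, one transports the multiplication on $B_+[-1]$ to maps $m_n\colon A_+[-1]^{\otimes n}\to A_+[-1]$ of degree $-1$, given by the standard sum over planar rooted trees (internal edges decorated by $h$, input leaves by $i$, root by $p$, vertices by the multiplication). By construction $m_1=d^{A[-1]}|_{A_+[-1]}$ and the $m_n$ satisfy the $A_\infty$-relations; moreover the transfer packages into an $A_\infty$-quasi-isomorphism $A\to B$ extending $i$. For the module, I would repeat the argument one level up: choose a DG $B$-module resolution $(\widetilde G,d^{\widetilde G})\xrightarrow{\simeq}M$ (such resolutions always exist over a DG algebra, and can be taken $S$-free), extract a deformation-retract datum $(I\colon G\to\widetilde G,\ P\colon\widetilde G\to G,\ H)$ over $S$ from the two $S$-free resolutions $G$ and $\widetilde G$ of $M$, and feed both retract data into the $A_\infty$-module version of the Homotopy Transfer Theorem. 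This transports the strict $B$-action on $\widetilde G$ to maps $m_n^G\colon A_+[-1]^{\otimes n-1}\otimes_S G\to G$ of degree $-1$ with $m_1^G=d^G$, satisfying the $A_\infty$-module relations over the $A_\infty$-algebra $(A,\{m_n\})$ just produced; the formulas are again tree sums, now with one distinguished module leaf.

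A more hands-on alternative avoids DG models and builds the $m_n$ (then the $m_n^G$) by induction on $n$: granted the first $n-1$ relations, the $n$-th relation requires $d^A\circ m_n\pm m_n\circ d^{\otimes}$ to equal an explicit expression $\phi_n$ in $m_1,\dots,m_{n-1}$; the lower relations force $\phi_n$ to be a cycle of the relevant degree in $\Hom_S(A_+^{\otimes n},A_+)$, and since $A\to R$ is a free resolution of a module concentrated in degree $0$, that Hom-complex has no homology in the relevant degree, so $\phi_n$ is a boundary and an admissible $m_n$ exists; the module case is identical.

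The step I expect to demand real care is the module transfer in the second paragraph: one must combine two independent homotopy retracts consistently, and then check that the resulting $m_n^G$ obey Burke's specific sign-and-shift conventions for the $A_\infty$-module relations --- recall the footnote to \Cref{def:ainf}, where Burke's shift convention is flagged as opposite to the one in the standard references --- which makes a careful sign bookkeeping unavoidable. The algebra transfer and the inductive alternative are comparatively routine once the conventions are fixed.
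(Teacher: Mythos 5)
The paper does not prove this statement at all: Proposition~\ref{exists} is cited verbatim from Burke's Proposition 3.6 in \cite{Golod_burke}, so there is no in-paper argument to compare against. What you have written is therefore an independent reconstruction, and I am assessing it on its own merits.

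Your primary route --- produce a strict DG $S$-algebra model $B$ of $R$ via the Tate construction, build a special deformation retract from $B$ onto $A$, transfer the product by Kadeishvili's theorem, and then repeat one level up for the module using a semifree DG $B$-module resolution $\widetilde G$ of $M$ --- is the standard and correct way to prove this. A couple of technical points deserve flagging. First, the claim that two $S$-free resolutions are related by a \emph{retract} (not merely a homotopy equivalence) is automatic only when the smaller one is minimal; since the proposition allows $A$ and $G$ to be arbitrary $S$-free resolutions, you should transfer onto the minimal resolutions $A_{\min}$ and $G_{\min}$ first and then pull the operations back along the splittings $A\cong A_{\min}\oplus(\text{contractible})$, $G\cong G_{\min}\oplus(\text{contractible})$ via $m_n\mapsto\iota\,m_n\,\pi^{\otimes n}$. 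Second, when you invoke the module transfer theorem you feed in two retracts at once; this is fine and standard (the tree formulas use $h_A$, $i_A$ on the algebra legs and $h_G$, $i_G$, $p_G$ on the module leg), but it is worth saying explicitly that no compatibility between the two retracts is required. Your remark about Burke's nonstandard shift convention is well taken: the sign bookkeeping has to be redone in his convention, but this is cosmetic.

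The "more hands-on alternative" is the part I would push back on. You assert that the obstruction cocycle $\phi_n$ must be a coboundary because "that Hom-complex has no homology in the relevant degree." This is not automatic. Writing $I=\ker(S\to R)$, one has $A_+[-1]\simeq I$ and hence $\Hom_S(A_+[-1]^{\otimes n},A_+[-1])\simeq\mathrm{R}\Hom_S(I^{\otimes^{\mathbf L}n},I)$, whose cohomology in the degree where $\phi_n$ lives is an Ext group that need not vanish. The correct argument is not a degree count: one must use that $\phi_n$ is not an arbitrary cycle but one whose image under $A_+[-1]\to I$ is identically zero, because the induced operation on $H_*(A)=R$ is the honest, strictly associative ring multiplication and so the associativity defect dies on homology; \emph{then} the comparison theorem for bounded-below complexes of free modules shows $\phi_n$ is null-homotopic. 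As written, your inductive sketch has a gap at exactly this point. Since your primary (transfer) argument is complete, this does not sink the proposal, but if you keep the inductive alternative you should replace the "no homology in the relevant degree" justification with the vanishing-on-homology argument just described.
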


Let us fix an $A_\infty$ $S$-module structure on $A$ and an $A_\infty$ $A$-module structure on $G$. 

\begin{constr}[\cite{Golod_burke} Definition 3.12]
\label{bar_resolution}
Given an element $x_1\otimes\dots\otimes x_n \in A_+[-1]^{\otimes n}$, we denote it as $[x_1|\dots|x_n]$. The \emph{bar complex associated to $A$ and $G$} is the complex of free $R$-modules with underlying graded module $\bigoplus_{n \ge 0}  A_+[-1]^{\otimes n} \otimes_S G \otimes_S R$ and differential given, on the summand $A_+[-1]^{\otimes n} \otimes_S G \otimes_S R$, by the formula:
$$
d = \left( \sum_{i = 1}^n \sum_{j = 0}^{i-1} b_{i,j} \right) + \left( \sum_{i = 1}^{n+1} b_i' \right), \text{ where, setting $\underline{x}=(-1)^{|x|+1}x$ for $x \in A_+[-1]$, we have}\footnote{In \cite[Definition 3.12]{Golod_burke} the sum over the maps $b_i'$ ranges from 0 to $n$. This is an error; the correct limits on the summation are $1$ and $n+1$.}
$$
\begin{align*}
 b_{i,j}([x_1|\cdots|x_n]\otimes g \otimes r) &= [\underline{x}_1|\cdots|\underline{x}_j|m_i([x_{j+1}|\cdots|x_{j+i}])| x_{j+i+1} | \cdots|x_n]\otimes g \otimes r, \text{ and} & \\
b'_i([x_1|\cdots|x_n]\otimes g \otimes r) &= [\underline{x}_1|\cdots|\underline{x}_{n-i+1}]\otimes m_i^G([x_{n-i+2}|\cdots|x_n]\otimes g) \otimes r.
\end{align*}
\end{constr}

\begin{thm}[\cite{Golod_burke} Theorem 3.13]
\label{thm:resolution}
The bar construction associated to $A$ and $G$ is an $R$-free resolution of $M$.
\end{thm}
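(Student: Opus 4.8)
The plan is to realize the bar complex of Construction~\ref{bar_resolution} as a base change of the \emph{one-sided} bar resolution of $G$ over $A$, and then to read off acyclicity from the fact that $A \xra{\simeq} R$ is a quasi-isomorphism. Write $\mathfrak{a} = \ker(S \to R)$ and set $\widetilde B \coloneqq \bigoplus_{n \ge 0} A \otimes_S (A_+[-1])^{\otimes n} \otimes_S G$, equipped with the one-sided $A_\infty$-bar differential: the maps $b_{i,j}$ and $b_i'$ of Construction~\ref{bar_resolution}, together with the extra terms that fold an application of $m_i$ (resp.\ $m_i^G$) into the leftmost $A$-factor. Since $A_0 = S$ and $A_{\ge 1}$ acts as zero on $R = A_0/\mathfrak{a}$ (the map $A \to R$ vanishes in positive homological degrees), every ``fold into the leftmost factor'' term is killed by $R \otimes_A(-)$; matching formulas, one checks that $R \otimes_A \widetilde B$ \emph{is} the bar complex of Construction~\ref{bar_resolution}. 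It therefore suffices to prove: (i) the differential on $\widetilde B$ squares to zero (which simultaneously gives $d^2 = 0$ for the bar complex), and (ii) $\widetilde B$ is a semifree resolution of $G$ over $A$ in the $A_\infty$ sense.

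Step (i) is a sign-bookkeeping exercise. Decompose the differential as $d = \sum_{k \ge 0} d_{(k)}$, where $d_{(k)}$ lowers the number of bar factors by $k$; thus $d_{(0)}$ is built from $m_1 = d^A$ and $m_1^G = d^G$, and for $k \ge 1$ the operator $d_{(k)}$ collects the $m_{k+1}$- and $m_{k+1}^G$-terms. Collecting in $d^2$ all terms that lower bar length by a fixed amount and fixing the position at which the inner operation occurs, the resulting alternating sum is precisely one instance of the Stasheff relations defining the $A_\infty$-algebra structure on $A$ and the $A_\infty$-module structure on $G$ from Definition~\ref{def:ainf} (which exist by Proposition~\ref{exists}); the convention $\underline x = (-1)^{|x|+1}x$ is exactly what makes the Koszul signs cancel.

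For step (ii): as a graded $A$-module, $\widetilde B$ is free on the $S$-basis of $\bigoplus_n (A_+[-1])^{\otimes n} \otimes_S G$. Because $A_+[-1]$ is concentrated in homological degrees $\ge 2$, only finitely many $n$ contribute in each degree, so the filtration of $\widetilde B$ by number of bar factors is exhaustive and bounded below, with associated graded pieces $A \otimes_S \bigl((A_+[-1])^{\otimes p} \otimes_S G\bigr)$; these are semifree over $A$ since $(A_+[-1])^{\otimes p} \otimes_S G$ is a bounded-below complex of free $S$-modules, so $\widetilde B$ is semifree over $A$. That $\widetilde B \xra{\simeq} G$ follows from the classical contracting homotopy for the bar construction: $S$-linearly prepending the unit $1 \in A_0 = S$ as a new leftmost factor gives a degree $+1$ map $s$ which, by a sign-careful computation, is a contracting homotopy for the augmented complex $\widetilde B \to G$; hence this map is an $S$-linear homotopy equivalence, in particular a quasi-isomorphism. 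Granting (i) and (ii), $R \otimes_A \widetilde B$ computes $R \otimes_A^{\mathbf L} G$, and since $G \xra{\simeq} M$ is a morphism of $A_\infty$-modules over $A$ and $A \xra{\simeq} R$ is a quasi-isomorphism, we get $R \otimes_A^{\mathbf L} G \simeq R \otimes_A^{\mathbf L} M \simeq M$, the last step because restriction and extension of scalars along $A \to R$ are mutually inverse equivalences. As $R \otimes_A \widetilde B$ is the bar complex and consists of free $R$-modules, this proves the theorem.

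I expect the main obstacle to be the organization of signs --- both in verifying $d^2 = 0$ in step (i) and in the contracting-homotopy identity $ds + sd$ in step (ii): the $A_\infty$-relations are alternating sums over all the ways of nesting the $m_i$ and $m_i^G$, and matching them term by term against $d^2$ and against $ds + sd$, while tracking Koszul signs through the shifts and the $\underline{x} = (-1)^{|x|+1}x$ rule, is delicate. A secondary point to make precise is the sense in which the $A_\infty$-module $\widetilde B$ is ``semifree'' and in which $R \otimes_A(-)$ is derived on it; this is handled by passing to the homotopy category of $A_\infty$-$A$-modules, where the bar-length filtration again supplies the needed cofibrant replacement.
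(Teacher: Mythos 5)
The paper does not prove this statement; it is quoted verbatim, with attribution, from Burke's article \cite{Golod_burke} (his Theorem 3.13). So there is no ``paper's own proof'' for you to match. That said, your proposal is, in outline, a faithful reconstruction of how Burke establishes the result: identify the bar complex of Construction~\ref{bar_resolution} with a base change along $A \to R$ of the one-sided bar resolution $\widetilde B = A \otimes T^c(A_+[-1]) \otimes G$ of the $A_\infty$-module $G$, check $d^2 = 0$ via the Stasheff relations, and transport the quasi-isomorphism $\widetilde B \xra{\simeq} G \xra{\simeq} M$ across the derived base change. Burke packages this through acyclic twisting cochains and twisted tensor products rather than through an explicit one-sided bar resolution, but the substance is the same.

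Two places where your sketch would need real work before it becomes a proof. First, ``$R \otimes_A \widetilde B$'' is not meaningful on the nose, since $A$ is only an $A_\infty$-algebra; one must either work inside a homotopy category of (strictly unital) $A_\infty$-modules with a suitable semifree notion, or --- more concretely --- filter both $\widetilde B$ and the bar complex by bar length, note that each filtration is exhaustive and bounded below in every homological degree because $A_+[-1]$ lives in degrees $\geq 2$, and show the map $\widetilde B \to \on{Bar}(A,G)\otimes_S R$ induced by $A \to R$ is a quasi-isomorphism on associated graded pieces (here you use that $(A_+[-1])^{\otimes n}\otimes_S G$ is a bounded-below complex of free $S$-modules, so $A\otimes_S(-) \to R\otimes_S(-)$ stays a quasi-isomorphism). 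The spectral sequence comparison then gives acyclicity directly, bypassing the need to make $\otimes_A^{\mathbf L}$ precise. Second, your reduction of the ``fold into the leftmost $A$-factor'' terms after base change tacitly assumes $A$ is \emph{strictly} unital and that the $A_\infty$-$A$-module structure on the leftmost $A$ in $\widetilde B$ is the unital extension of the $m_i$'s; this is available here (Burke arranges it), but it has to be stated, since $m_i$ in Definition~\ref{def:ainf} is only defined on $A_+^{\otimes i}$ and does not a priori say what happens when an argument is $1 \in A_0 = S$. With those two points pinned down, and the (routine but genuinely fiddly) sign verification you flag, the argument goes through.
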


\subsection{Golod rings and modules}
\label{Golod_periodicity}

We recall here some background on Golod rings and modules; we refer the reader to \cite{avramov} for a comprehensive introduction to this topic. Let us assume now that $S$ is a regular local ring with maximal ideal $\m$ and residue field $k$. Let $\varphi : S \to R$ denote the canonical map. We assume that our $S$-free resolutions $A \xra{\simeq} R$ and $G \xra{\simeq} M$ are minimal. Given a local ring $T$ and a finitely generated $T$-module $N$, let $\beta_N^i$ denote the $i^{\th}$ Betti number of $N$ and $P_N^T(t)=\sum_i\beta^i_N t^i$ its Poincar\'e series.

\begin{dfn}
We say the $R$-module $M$ is \emph{$\varphi$-Golod} if we have an equality
\begin{equation}
\label{poincare}
P_M^R(t)=\dfrac{P_M^S(t)}{1-t(P_R^S(t)-1)}.
\end{equation}
The ring $R$ is \emph{$\varphi$-Golod} if $k$ is a $\varphi$-Golod $R$-module.
\end{dfn}

It is known that the notion of $\varphi$-Golodness is independent of the choice of presentation $\varphi$ \cite[\S 6]{Golod_burke}, and so we refer to $\varphi$-Golod rings and modules as simply Golod. The bar complex associated to choices of $A_\infty$-structures on $A$ and $G$ (Construction~\ref{bar_resolution}) has Betti numbers given by the coefficients of the power series on the right of \eqref{poincare}, as observed in \cite[Proof of Lemma 6.3]{Golod_burke}. Thus, we always have an inequality
$
P_M^R(t)\le\dfrac{P_M^S(t)}{1-t(P_R^S(t)-1)}
$
coefficient-wise, and one deduces:

\begin{lemma}[\cite{Golod_burke} Lemma 6.3]
\label{lem:golod}
Let $S$ be a regular local ring, $R$ a cyclic $S$-algebra, $M$ a finitely generated $R$-module, $A$ the minimal $S$-free resolution of $R$, and $G$ the minimal $S$-free resolution of $M$. A finitely generated $R$-module $M$ is Golod if and only if the bar construction associated to $A$ and $G$ is minimal with respect to some (equivalently every) $A_\infty$-structure on $A$ and $G$.
\end{lemma}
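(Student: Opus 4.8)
The plan is to compare, degree by degree, the ranks of the free modules in the bar construction with the Betti numbers of $M$ over $R$. Fix choices of $A_\infty$-structures on $A$ and $G$ (which exist by Proposition~\ref{exists}), and let $B$ denote the resulting bar construction of Construction~\ref{bar_resolution}. By Theorem~\ref{thm:resolution}, $B$ is an $R$-free resolution of $M$. Since the minimal free resolution of $M$ splits off of $B$ as a direct summand of complexes, we have $\operatorname{rank}_R(B_i) \ge \beta_i$ for every $i$, where $\beta_i$ denotes the $i^{\th}$ Betti number of $M$ over $R$; and, by the standard characterization of minimality, $B$ is minimal if and only if $d^B(B) \subseteq \m B$, equivalently if and only if $\operatorname{rank}_R(B_i) = \beta_i$ for all $i$.

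Next I would record the rank-generating function of $B$. The underlying graded module of $B$ is $\bigoplus_{n \ge 0} A_+[-1]^{\otimes n} \otimes_S G \otimes_S R$, which depends only on $A$ and $G$ as graded modules, not on the chosen $A_\infty$-structures. Since $A$ is the minimal $S$-free resolution of $R$ with $A_0 = S$, its brutal truncation $A_+$ has rank-generating function $P_R^S(t) - 1$; the homological shift by $-1$ multiplies this by $t$, so $A_+[-1]$ contributes $t(P_R^S(t) - 1)$. Forming the $n$-fold tensor product over $S$ multiplies generating functions; tensoring with $G \otimes_S R$ (an $R$-free module with the same ranks as the minimal $S$-free resolution $G$ of $M$) multiplies by $P_M^S(t)$; and summing the geometric series over $n \ge 0$ yields
\[
\sum_i \operatorname{rank}_R(B_i)\, t^i \;=\; \frac{P_M^S(t)}{1 - t\bigl(P_R^S(t) - 1\bigr)}.
\]

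Finally I would assemble the equivalence. Comparing coefficients, the inequalities $\operatorname{rank}_R(B_i) \ge \beta_i$ say precisely that $\tfrac{P_M^S(t)}{1 - t(P_R^S(t)-1)} \ge P_M^R(t)$ coefficientwise, with equality in degree $i$ exactly when $\operatorname{rank}_R(B_i) = \beta_i$. Hence $B$ is minimal $\iff$ equality holds in every degree $\iff P_M^R(t) = \tfrac{P_M^S(t)}{1 - t(P_R^S(t)-1)}$, which is exactly the defining identity \eqref{poincare}, i.e. $M$ is Golod. For the ``some (equivalently every)'' clause, observe that the right-hand power series above --- hence each rank $\operatorname{rank}_R(B_i)$ --- is independent of the choice of $A_\infty$-structures, so if $B$ is minimal for one choice it has exactly the Betti numbers of $M$ for every choice, and is therefore minimal for every choice.

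I do not expect a serious obstacle here: the argument is a rank count layered on top of Theorem~\ref{thm:resolution} and the definition of Golodness. The one point that genuinely requires care is the bookkeeping of the homological shift $[-1]$ together with the sign conventions flagged in the footnotes to Definition~\ref{def:ainf} and Construction~\ref{bar_resolution}: these must be handled correctly so that each tensor factor of $A_+$ produces exactly one factor of $t$, which is what yields the denominator $1 - t(P_R^S(t)-1)$. A shift or sign slip there would corrupt the Poincaré-series identity that drives the whole proof.
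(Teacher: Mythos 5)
Your proof is correct and is essentially the argument the paper itself sketches in the paragraph immediately preceding the lemma (crediting it to the proof of Lemma 6.3 in Burke): the ranks of the bar construction $B$ are read off from the underlying graded module, giving $\sum_i \operatorname{rank}_R(B_i)\,t^i = P_M^S(t)/(1-t(P_R^S(t)-1))$ independently of the $A_\infty$-structure, and minimality of $B$ is equivalent to this series equaling $P_M^R(t)$, i.e.\ to Golodness. The bookkeeping of the shift $[-1]$ contributing the factor $t$ is handled correctly under the paper's convention $C[1]_j = C_{j+1}$.
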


We also recall the following result of Lescot:

\begin{thm}[\cite{lescot}]\label{golod_syzygies}
\label{lescot}
Let $S$ be a regular local ring, $R$ a cyclic $S$-algebra that is Golod, and $M$ a finitely generated $R$-module. If $n > \dim(S)$, then the $n^{\th}$ syzygy of $M$ is Golod. 
\end{thm}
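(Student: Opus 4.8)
The plan is to reduce, via the bar-resolution criterion of Lemma~\ref{lem:golod}, to the minimality of the higher $A_\infty$-module operations on the minimal $S$-free resolution of the syzygy, and then to use the bound $n > \dim S$ to ensure this. Write $R = S/I$ with $d := \dim S$, assume $R$ is Golod, fix a finitely generated $R$-module $M$, and set $N := \syz_n(M)$ with $n > d$. If $\pd_R M < \infty$, then $\pd_R M \le \depth R \le \dim R \le d < n$ by Auslander--Buchsbaum, so $N = 0$ and there is nothing to prove; hence we may assume $N$ is a nonfree $R$-module. Let $A$ and $G$ be the minimal $S$-free resolutions of $R$ and $M$; since $S$ is regular, $\pd_S M \le \dim S = d < n$, so $G$ is concentrated in homological degrees strictly below $n$.

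The first step is to observe that, since $R$ is Golod, we may fix an $A_\infty$-algebra structure on $A$ all of whose operations $m_i$ have entries in $\m$: applying Lemma~\ref{lem:golod} to the Golod $R$-module $k$ shows that, for a suitable $A_\infty$-algebra structure, the bar construction resolving $k$ is minimal, and looking at the $b_{i,0}$-terms of the bar differential in Construction~\ref{bar_resolution} forces minimality of each $m_i$. Now fix also a compatible $A_\infty$-$A$-module structure on the minimal $S$-free resolution $H$ of $N$. By Lemma~\ref{lem:golod} again, $N$ is Golod if and only if the bar construction associated to $A$ and $H$ is minimal; since the maps $b_{i,j}$ in its differential are built from the (now minimal) $m_i$, and $b_1'$ from the minimal differential $d^H$, this holds if and only if the module operations $m_i^{H}$ have image in $\m H$ for all $i \ge 2$. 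Thus it remains to exhibit such a structure on $H$.

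To produce it, I would analyze the bar resolution $B$ of $M$ itself — the bar construction associated to $A$ and $G$, which is an $R$-free resolution of $M$ by Theorem~\ref{thm:resolution} but is non-minimal precisely because $M$ need not be Golod — together with its smart truncation $B_{\ge n}$, which resolves $N$ up to shift. Because $G$ lives in homological degrees below $n$, every summand of $B$ in degree $\ge n$ carries at least one $A_+[-1]$-tensor factor, so $B_{\ge n}$ lies entirely in the part of the bar complex of positive bar length. The goal would then be to show that minimalizing $B_{\ge n}$ — that is, canceling the units that arise from the non-minimal module operations $m_i^G$ of $M$ — produces a complex that is again a bar construction over $A$, namely the one associated to $A$ and $H$ for a suitable $A_\infty$-$A$-module structure on $H$; since the $m_i$ are minimal, such a complex is automatically minimal, giving Golodness of $N$.

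I expect this last step to be the main obstacle: one must control the minimalization of $B_{\ge n}$ and identify the outcome as a bar construction, equivalently show that the failure of $M$ to be Golod is confined to homological degrees at most $\dim S$ and is washed out by further syzygies. This is what Lescot establishes in \cite{lescot} by a different route — the theory of inert submodules — showing that for $n > \dim S$ the syzygy $\syz_n(M)$ embeds in a free $R$-module as an inert submodule through which the Golod property of $R$ passes; in both approaches the hypothesis $n > \dim S$ enters only to guarantee that all of the finite $S$-free resolution data of $M$ has been exhausted by homological degree $n$. Reproving the result from scratch, I would expect either a delicate homotopy-transfer computation along $B_{\ge n} \simeq H$ or an adaptation of Lescot's inert-submodule argument to be unavoidable.
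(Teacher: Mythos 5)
The paper does not prove this theorem; it quotes it from Lescot~\cite{lescot}, so there is no internal argument for you to match. Your proposal, by your own admission, is a plan rather than a proof. The reductions in your first two paragraphs are sound in outline: once one checks that the $b_{i,0}$-component of the bar differential of $k$ can be isolated (restrict to the summand $A_+[-1]^{\otimes i}\otimes K_0\otimes R$ mapping to $A_+[-1]^{\otimes 1}\otimes K_0\otimes R$; no $b_{i',j}$ with $j\ge1$ and no $b'_{i'}$ lands there, since $m^K_{i'}$ raises the Koszul degree), Golodness of $R$ does force each $m_i$ to take values in $\m A_+$, and via Lemma~\ref{lem:golod} Golodness of $N=\syz_n(M)$ does reduce to producing an $A_\infty$-$A$-module structure on the minimal $S$-free resolution $H$ of $N$ whose operations $m_i^H$ have image in $\m H$ for $i\ge 2$.

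The genuine gap is your third paragraph, and you flag it yourself. That $B_{\ge n}$ sits in the positive bar-length part of the bar complex because $G$ lives below degree $n>\dim S\ge\pd_S M$ is correct, but the claim that minimalizing $B_{\ge n}$ yields a bar construction over $A$ (for a transferred $A_\infty$-$A$-module structure on $H$ whose higher operations are then automatically minimal) is precisely where all the content of Lescot's theorem resides, and it is not proved. It is not formal that the minimal model of such a truncation is again realized by a bar construction over the same $A$, nor that the transferred $m_i^H$ with $i\ge2$ land in $\m H$; one would need a homotopy-perturbation argument carefully adapted to remain compatible with the bar functor, which is a substantial piece of work not present here, or else a reversion to Lescot's Poincar\'e-series/inert-submodule method. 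As written, the proposal identifies a plausible $A_\infty$-strategy but does not establish the result.
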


\subsection{Periodicity of ideals of minors of minimal free resolutions over Golod rings}

Our main result concerning ideals of minors over Golod rings is the following:

\begin{thm}\label{Golod_periodicity}
Let $S$ be a regular local ring, $R$ a cyclic $S$-algebra that is Golod, $M$ a finitely generated $R$-module, and $r > 0$. 
\begin{enumerate}
\item We have $\I^r_n(M) = \I^r_{n+2}(M)$ for $n\gg0$.
\item If $\pd_S(R) \ge 2$, we have $\I^r_n(M) = \I^r_{n+1 }(M)$ for $n\gg0$.
\end{enumerate}
\end{thm}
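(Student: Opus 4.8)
The plan is to analyze the differential of the bar complex $B$ associated to the minimal $S$-free resolutions $A \xra{\simeq} R$ and $G \xra{\simeq} M$ directly, exploiting the direct-sum decomposition of its underlying module, namely $B_n = \bigoplus_{p} (A_+[-1]^{\otimes p} \otimes_S G \otimes_S R)_n$. By Lemma~\ref{lem:golod}, since $M$ is Golod, $B$ is the minimal $R$-free resolution of $M$, so $\I^r_n(M) = \I^r_n(B)$; by Theorem~\ref{lescot}, every high syzygy is again Golod, so after truncating we may freely assume whatever we like about the starting homological degrees. The key structural observation is that the operator $b_{p,0}$ appearing in Construction~\ref{bar_resolution} includes, as one of its summands, the map that prepends a length-one bar $[x_1]$ built from $m_1 = d^{A[-1]}$ — but more importantly, for $i=2$, the term $b_{2,j}$ contracts two adjacent bar entries via $m_2$. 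I would first isolate the piece of the differential that simply \emph{deletes} bar symbols of lowest internal homological degree (an element of $A_1$, i.e. a degree-one generator lifted from a minimal generator of the ideal $I = \ker(S \to R)$), and show that with respect to a suitable ordering of the monomial basis of $B$, the differential $d_{n+1}$ contains a block equal to $d_{n-1}$ up to sign — exactly as in the proof of Proposition~\ref{prop:shamashc1}. This gives $\I^r_{n-1}(M) \subseteq \I^r_{n+1}(M)$, hence a 2-periodicity containment; Noetherianity then upgrades ``eventually non-decreasing'' to ``eventually periodic'', proving (1).

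For part (2), when $\pd_S(R) \ge 2$ there is a minimal generator of $I$, hence a basis element of $A_2$ (in homological degree $2$), so the bar resolution contains bar symbols from $A_2$. Prepending such a symbol $[\alpha]$ with $\alpha \in A_2$ shifts homological degree by $1$ (since $A_2[-1]$ sits in degree $1$). I would show that the corresponding block of $d_{n+1}$ (the one that, on the summand indexed by barwords starting with a fixed $A_2$-element, forgets that leading symbol) reproduces $d_n$ up to sign, giving the stronger containment $\I^r_n(M) \subseteq \I^r_{n+1}(M)$ for $n \gg 0$, and again Noetherianity finishes. The point is that $\dim_k(I/\m I) = \beta_1^S(R) = \pd$-related data controls whether a degree-one \emph{or} a degree-two ``free variable'' is available to prepend, mirroring the role of $\pd_S(R) \ge 2$ versus the hypersurface case.

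The main obstacle I anticipate is bookkeeping: the bar differential is a sum of many operators $b_{i,j}$ and $b_i'$, and to extract a genuine \emph{square} submatrix of $d_{n+1}$ equal to $\pm d_{n-1}$ (resp. $\pm d_n$) I must check that the ``prepend a fixed symbol'' map is injective on basis monomials, that its image lands in a complementary set of basis monomials of $B_n$, and — crucially — that no \emph{other} term of the differential maps a prepended monomial back into that complementary image set (the analogue of the lower-triangularity check in Lemma~\ref{lem:technical}). Verifying this last point requires understanding which $b_{i,j}$ can re-create a barword with the prescribed leading symbol; the signs $\underline{x} = (-1)^{|x|+1}x$ and the precise index ranges (including the corrected range $1 \le i \le n+1$ for the $b_i'$) must be tracked carefully. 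Once the combinatorics of ``leading symbol'' interactions is pinned down, the rest is the same Noetherian-stabilization argument used for complete intersections.
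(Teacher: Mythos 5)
Your treatment of part (1) is the same block-decomposition argument the paper uses: $A_1 \otimes_S B_{n-1}$ is a summand of $B_{n+1}$, and the upper-left block of $d_{n+1}$ with respect to the decomposition $(A_1 \otimes B_{n-1}) \oplus H_{n+1}$ is $\id_{A_1} \otimes d_{n-1}$ (up to sign), giving $\I^r_{n-1}(M) \subseteq \I^r_{n+1}(M)$ and then Noetherian stabilization. (A small correction in passing: the operators $b_{i,j}$ do not ``prepend'' anything; the prepending is a description of the direct-sum decomposition of $B_{n+1}$, while the $b_{i,j}$ contract adjacent bar symbols via $m_i$. Also, $\pd_S(R) \ge 2$ does not give a minimal generator of $I$ in $A_2$; minimal generators of $I$ index $A_1$, whereas $A_2$ is indexed by minimal first syzygies of $I$.)

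Part (2), however, has a genuine arithmetic gap. Prepending a bar symbol from $A_t$ shifts homological degree by $t+1$, not by $t-1$: from \eqref{barterm}, the summand $A_{\ell_1} \otimes \cdots \otimes A_{\ell_k} \otimes G_m$ sits in degree $\ell_1 + \cdots + \ell_k + m + k$, so adjoining $\ell_0 = t$ increases the degree by $t$ (internal degree of the new symbol) plus $1$ (for the extra tensor factor). With the paper's convention $C[-1]_j = C_{j-1}$, an element of $A_2$ sits in degree $3$ of $A_+[-1]$, not degree $1$; your claimed shift of $1$ is therefore incorrect. Consequently, prepending an element of $A_2$ exhibits $A_2 \otimes B_n$ as a summand of $B_{n+3}$, giving only $\I^r_n(M) \subseteq \I^r_{n+3}(M)$, i.e. eventual \emph{3}-periodicity, not the containment $\I^r_n \subseteq \I^r_{n+1}$ that you assert. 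The correct route to part (2) --- and the one the paper takes --- is to observe that $A_1 \ne 0$ gives eventual 2-periodicity and $A_2 \ne 0$ gives eventual 3-periodicity, and a sequence of ideals that is simultaneously eventually 2-periodic and 3-periodic is eventually 1-periodic (since $\gcd(2,3)=1$). Your plan needs this combination step; as written it does not yield the theorem.
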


\begin{proof}
By Theorem~\ref{lescot}, we may assume $M$ is Golod. Let $A$ be the minimal $S$-free resolution of $R$ and $G$ the minimal $S$-free resolution of $M$. Applying Proposition~\ref{exists}, choose an $A_\infty$ $S$-algebra structure on $A$ and an $A_\infty$ $A$-module structure on $M$. By Theorem~\ref{thm:resolution} and Lemma~\ref{lem:golod}, the associated bar construction $(B, d)$ (Construction~\ref{bar_resolution}) is the minimal free resolution of $M$. 

If $A_t = 0$ for all $t > 0$, then $R = S$, and so $R$ is regular local. In this case, the statement of (1) is trivial, and (2) holds vacuously. We therefore may assume that $A_1 \ne 0$. 

We now proceed much like the proof of Proposition~\ref{prop:shamashc1}. For each $n \ge 0$, we have:
\begin{equation}
\label{barterm}
B_n = \bigoplus_{k \ge 0} \bigoplus_{\substack{\ell_1, \dots, \ell_k \ge 1, \text{ } m \ge 0, \\ \ell_1 + \cdots + \ell_k + m + k = n}} A_{\ell_1} \otimes_S \cdots \otimes_S A_{\ell_k} \otimes_S G_m \otimes_S R.
\end{equation}
Suppose $t > 0$ and $A_t \ne 0$. Fix $n \ge 0$. Notice that $A_t \otimes_S B_n$ is a summand of $B_{n + t + 1}$. Let $H_{n+t+1}$ denote the complement of $A_t \otimes_S B_n$ in the decomposition \eqref{barterm} for $B_{n+t+1}$. The bar differential $d_{n+t+1} : B_{n+t+1} \to B_{n+t}$ may be realized as a $2 \times 2$ matrix
\begin{equation}
\label{matrix}
d_{n+t+1} : (A_t \otimes_S B_n) \oplus H_{n+t+1} \to (A_t \otimes_S B_{n-1}) \oplus H_{n+t}.
\end{equation}
\vskip.1in
\noindent{\emph{Claim.}} The upper-left entry of the $2 \times 2$ matrix \eqref{matrix} is $\id_{A_t} \otimes d_n$. 
\vskip.1in
Before we prove the claim, let us explain how it quickly implies the desired results. Assuming the claim, we have $\I^r_n(M) \subseteq \I^r_{n+t+1}(M)$ for all $n \ge 0$. By the Noetherianity of $R$, we therefore have $\I^r_n(M) = \I^r_{n+t+1}(M)$ for $n \gg 0$; that is, the ideals of minors are eventually $(t +1)$-periodic. Notice that this holds for all $t > 0$ such that $A_t \ne 0$. Applying this with $t = 1$ proves (1). In the setting of (2), we have both $A_1 \ne 0$ and $A_2 \ne 0$, so the ideals of minors of $M$ are eventually both 2-periodic and 3-periodic, and hence 1-periodic.

Let us now prove the claim. We consider the action of the bar differential on a summand $Y$ of $B_{n+t+1}$ of the form $A_t \otimes_S A_{\ell_2} \otimes_S \cdots \otimes_S A_{\ell_k} \otimes_S G_m \otimes_S R$. 
Referring to the notation in Construction~\ref{bar_resolution}: the component $b_{i,j}$ (resp. $b'_i$) of the bar differential $d_{n + t + 1}$ acts on the summand $Y$ via the map $\id_{A_t} \otimes d_n$ whenever $j \ne 0$ (resp. when $i \ne k+1$). For $1 \le i \le n$, the component $b_{i, 0}$ of $d_{n+t + 1}$ maps $Y$ to $A_{t + \ell_2 + \cdots + \ell_{i} + i - 2} \otimes_S A_{\ell_i} \otimes_S \cdots \otimes_S A_{\ell_k} \otimes_S G_m \otimes_S R$; since $t + \ell_2 + \cdots + \ell_{i} + i - 2$ cannot be equal to $t$, the component $b_{i,0}$ does not contribute to the upper-left entry of the matrix \eqref{matrix}. Similarly, the component $b'_{k+1}$ maps $Y$ to $G_{n+t} \otimes_S R$, and so $b'_{k+1}$ also does not contribute to the upper-left entry of the matrix \eqref{matrix}. This proves the claim.
\end{proof}

\begin{example}
The assumption in Theorem~\ref{Golod_periodicity}(2) 
that $\pd_S(R) \ge 2$ is necessary. For instance, take $S = k[[ x ]]$ and $R = S/(x^3)$. The ring $R$ is Golod, but one easily checks that $\I^1_n(k) \ne \I^1_{n+1}(k)$ for all $n \ge 1$.
\end{example}

\begin{remark}
As in Theorem~\ref{CI_periodicity}, Theorem~\ref{Golod_periodicity} holds for $M$ a bounded below complex of finitely generated free $R$-modules with bounded homology, cf. Remark~\ref{ciremarks}.
\end{remark}

\section{Counterexample to periodicity of ideals of minors in minimal free resolutions by Gasharov-Peeva}
\label{sec:counterexample}
We have shown that ideals of minors of minimal free resolutions over local complete intersections and Golod local rings are eventually 2-periodic. Given $n \ge 1$, we now exhibit an Artinian Gorenstein local ring $R$ whose modules do not necessarily have $n$-periodic ideals of minors. We consider a ring studied by Gasharov-Peeva in \cite[Proposition 3.1]{Gasharov-Peeva_90}. 
Let $k$ be a field, $\alpha \in k$, $S=k[x_1,\dots,x_5]_{(x_1,\dots,x_5)}$, and $I \subseteq S$ the quadric ideal
$$
(x_1^2, x_2^2, x_3x_4, x_3x_5, x_4x_5, x_5^2, \alpha x_1x_3+x_2x_3,
x_1x_4+x_2x_4,x_3^2-x_2x_5+\alpha x_1x_5, x_4^2-x_2x_5+x_1x_5).
$$
The ring $R = S/I$ is clearly Artinian, and it is Gorenstein by \cite[Proposition 3.1]{Gasharov-Peeva_90}(i).

\begin{prop}
\label{prop:counter}
If the group of units of $k$ has an element of infinite order, then there is a finitely generated $R$-module $M$ such that 
\begin{enumerate}
\item $\I^1_i(M) \ne \I^1_{i+n}(M)$ for any $i \ge 0$ and $n \ge 1$, and 
\item over the ring $R/(x_5)$, we have $\I^1_i(M/x_5M) \ne \I^1_{i+n}(M/x_5M)$ for $i \ge 0$ and $n \ge 1$.
\end{enumerate}
\end{prop}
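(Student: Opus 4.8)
The plan is to invoke Gasharov--Peeva's analysis of the ring $R$ from \cite[Proposition 3.1]{Gasharov-Peeva_90}. Their key construction produces, for a parameter $\alpha \in k$ that is not a root of unity, a family of modules $M_\alpha$ whose minimal free resolutions have differentials whose entries are controlled by powers of $\alpha$; concretely, the relevant $1 \times 1$ minors in the $i$-th differential involve the scalar $\alpha^i$ (up to units and lower-order terms). Since $\alpha$ has infinite multiplicative order, the ideals generated by these entries genuinely change with $i$, so no two differentials at indices differing by $n$ can have the same ideal of $1 \times 1$ minors. The first step is therefore to recall (or lightly re-derive) the explicit periodic-up-to-scalar shape of the minimal free resolution of the Gasharov--Peeva module, and to pin down which entry of $d_i$ carries the factor $\alpha^i$; this is where I would cite \cite[Proposition 3.1]{Gasharov-Peeva_90} directly rather than reproving it.

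For part (1), once the resolution $(F,d)$ of $M \coloneqq M_\alpha$ is in hand, I would argue as follows. The differential $d_i$ can be chosen so that, after a change of basis, it has the block/diagonal structure exhibited by Gasharov--Peeva, in which one diagonal entry equals $\alpha^i$ times a unit plus a term in $\mathfrak m^2$ (or, more precisely, so that $\I^1_i(M)$ is an explicit ideal depending on $\alpha^i$). One then checks that $\I^1_i(M) = \I^1_{i+n}(M)$ would force an equation of the form $\alpha^i u = \alpha^{i+n} u' + (\text{correction})$ inside $R$, and reducing modulo $\mathfrak m^2$ (or modulo an appropriate quotient where the correction terms die) yields $\alpha^n = 1$ in $k$, contradicting that $\alpha$ has infinite order. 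The bookkeeping of "which correction terms survive which quotient" is the part that needs care; the cleanest route is probably to pass to the associated graded ring or to reduce modulo a carefully chosen ideal so that the $\alpha$-dependence is isolated in a single graded piece.

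For part (2), I would observe that $x_5$ is (up to the structure Gasharov--Peeva set up) a regular element on a high syzygy of $M$, or more simply that killing $x_5$ does not disturb the $\alpha$-dependent entries: the element $x_5$ does not appear in the relevant diagonal entries of the $d_i$, so the images of those entries in $R/(x_5)$ still involve $\alpha^i$ nontrivially. Thus the same contradiction argument as in (1) applies verbatim over $R/(x_5)$, once one notes that $\I^1_i(M/x_5M)$ is the image of $\I^1_i(M)$ under $R \to R/(x_5)$ (the minimal free resolution of $M/x_5 M$ need not be $d \otimes R/(x_5)$, but a comparison map and a minimality check, or a direct appeal to the Gasharov--Peeva description over the quotient, handles this). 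The main obstacle, as above, is making the "the $\alpha^i$ factor genuinely changes the ideal" step rigorous: one must ensure the lower-order correction terms in the entries of $d_i$ cannot conspire to make $\I^1_i(M) = \I^1_{i+n}(M)$ despite the differing leading scalars, and the safest way to do that is to exhibit an explicit linear functional (reduction to a one-dimensional graded piece of $R$ or of $R/(x_5)$) on which the ideal $\I^1_i(M)$ restricts to $k \cdot \alpha^i$.
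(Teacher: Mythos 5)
Your approach matches the paper's: both rely on the explicit Gasharov--Peeva resolution $\left[R^2 \xla{\delta_1} R^2 \xla{\delta_2} \cdots\right]$ of $M = \im(\delta_0)$, where $\delta_i = \begin{pmatrix} x_1 & \alpha^i x_3 + x_4 \\ 0 & x_2 \end{pmatrix}$, and use the infinite order of $\alpha$ to see that the ideals $\I^1_i(M) = (x_1, x_2, \alpha^i x_3 + x_4)$ cannot repeat, by reducing mod $\m^2$. For part (2) the paper simply cites \cite[Proposition 3.4]{Gasharov-Peeva_90}, which records that the same matrices, reduced mod $x_5$, give the minimal resolution over $R/(x_5)$.

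Two corrections. First, your description of the $\alpha$-dependent entry as ``$\alpha^i$ times a unit plus a term in $\m^2$'' is wrong: it is the degree-one form $\alpha^i x_3 + x_4$, an off-diagonal entry lying in $\m$, not a unit. Your parenthetical fallback (``$\I^1_i(M)$ is an explicit ideal depending on $\alpha^i$'') is the correct statement, and the mod-$\m^2$ argument does go through with it. Second, the claim that $x_5$ is a regular element on a high syzygy of $M$ fails outright: $x_5^2$ is among the generators of $I$, so $x_5$ is nilpotent in $R$, hence a zerodivisor on every nonzero $R$-module. That route is closed. The way to handle (2) is precisely the option you mention last: appeal directly to Gasharov--Peeva's Proposition 3.4 for the minimal resolution of $M/x_5M$ over $R/(x_5)$, after which the same infinite-order argument applies.
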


\begin{proof}
Let $\alpha$ be an element of the group of units of $k$ with infinite order. For $i \ge 0$, let $\delta_i$ denote the matrix
$
\begin{pmatrix}
x_1 & \alpha^i x_3+x_4 \\
0 & x_2  
\end{pmatrix}
$
with entries in $R$, and let $M = \im(\delta_0)$. By  \cite[Proposition 3.1]{Gasharov-Peeva_90}(ii), the complex 
$
\left[ R^2 \xla{\delta_1} R^2 \xla{\delta_2}  R^2 \xla{\delta_3} \cdots \right]
$
is the minimal free resolution of $M$. Observe that $\I_i^1(M) \ne \I_{i + n}^1(M)$ for $i \ge 0$ and $n \ge 1$. The last statement follows from \cite[Proposition 3.4]{Gasharov-Peeva_90}
\end{proof}

\begin{remark}
    Note that the example above is not a counterexample to the weaker version of periodicity suggested in Question \ref{introquestion1}.
\end{remark}

\section{Applications of the periodicity theorems}
\label{sec:moreresults}

In this final section, we apply our periodicity results (Theorems~\ref{CI_periodicity} and~\ref{Golod_periodicity}) to prove several additional facts about minimal free resolutions over complete intersections and Golod rings. We begin with a proof of Corollary~\ref{intro4}; let us restate the result here:

\begin{cor}
\label{cor:exterior}
    Let $S$ be a regular local ring, $I$ an ideal of $S$, and $M$ a finitely generated $R = S/I$-module. Let $\Syz_i(M)$ (resp. $\beta_i$) be the $i^{\th}$ syzygy (resp. Betti number) of $M$ over $R$. We have the following:
    \begin{enumerate}
        \item If $R$ is a complete intersection, then $\bigwedge^{\beta_{i}}\syz_i(M)\cong \bigwedge^{\beta_{i+2}}\syz_{i+2}(M)$ for $i \gg 0$. If, in addition, the minimal free resolution of $M$ is a Shamash resolution, then this isomorphism holds for $i \ge \dim(S) + 1$. 
        \item If $R$ is Golod, then $\bigwedge^{\beta_{i}}\syz_i(M)\cong \bigwedge^{\beta_{i+2}}\syz_{i+2}(M)$ for $i \gg 0$. If, in addition, we have $\pd_S(R) \ge 2$, then $\bigwedge^{\beta_{i}}\syz_i(M)\cong \bigwedge^{\beta_{i+1}}\syz_{i+1}(M)$ for $i \gg 0$.
        \item Suppose $\depth(R)=0$ and that the Burch index of $R$ (see \cite[Definition 1.2]{DE}) is at least two. For all $i\geq 5$, we have $\bigwedge^{\beta_i}\syz_i(M)\cong k$.
    \end{enumerate}
\end{cor}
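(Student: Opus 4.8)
The plan is to reduce all three parts to the periodicity statements for ideals of $1\times 1$ minors (Theorems~\ref{CI_periodicity} and~\ref{Golod_periodicity} and Corollary~\ref{cor:eisenbud}), together with the constraints on $\I^1_i(M)$ coming from \cite{DE}, via a standard description of the top exterior power of a module in terms of a presentation matrix.

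The key preliminary observation is the following. If $N$ is a finitely generated $R$-module minimally generated by $a$ elements, with minimal presentation $R^b \xrightarrow{\phi} R^a \to N \to 0$, then by the right-exactness of exterior powers there is an exact sequence
\[
\bigwedge^{a-1}(R^a)\otimes_R R^b \longrightarrow \bigwedge^{a}(R^a) \longrightarrow \bigwedge^{a} N \longrightarrow 0,
\]
in which the first map sends $\omega\otimes e_j \mapsto \phi(e_j)\wedge\omega$. Identifying $\bigwedge^{a}(R^a)\cong R$, one checks at once that the image of this map is precisely $\I^1(\phi)$, the ideal of entries (equivalently, $1\times 1$ minors) of $\phi$; hence $\bigwedge^{a} N\cong R/\I^1(\phi)$. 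I would then apply this with $N = \syz_i(M) = \coker(d_{i+1}\colon F_{i+1}\to F_i)$ and $\phi = d_{i+1}$, noting that minimality of $F$ makes $d_{i+1}$ a minimal presentation of $\syz_i(M)$ so that $a = \beta_i = \operatorname{rank} F_i$, to obtain the identity
\[
\bigwedge^{\beta_i}\syz_i(M)\;\cong\;R/\I^1_{i+1}(M)\qquad\text{for all }i\ge 0.
\]

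With this identity, the three parts follow quickly. For (1): Theorem~\ref{CI_periodicity} gives $\I^1_{i+1}(M)=\I^1_{i+3}(M)$ for $i\gg 0$, whence $R/\I^1_{i+1}(M)\cong R/\I^1_{i+3}(M)$; and when the minimal free resolution of $M$ is a Shamash resolution, Corollary~\ref{cor:eisenbud} gives $\I^1_\ell(M)=\I^1_{\ell+2}(M)$ for $\ell\ge\dim(S)+1$, so taking $\ell=i+1$ the isomorphism holds once $i\ge\dim(S)$, hence in particular for $i\ge\dim(S)+1$. For (2): Theorem~\ref{Golod_periodicity}(1) gives $\I^1_{i+1}(M)=\I^1_{i+3}(M)$ for $i\gg 0$, while if $\pd_S(R)\ge 2$ then Theorem~\ref{Golod_periodicity}(2) gives $\I^1_{i+1}(M)=\I^1_{i+2}(M)$ for $i\gg 0$; feeding these into the identity gives the two stated isomorphisms. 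For (3): we may assume $M$ is not free (otherwise $\syz_i(M)=0$ for $i\ge 1$), so $\pd_R(M)=\infty$ by the Auslander--Buchsbaum formula since $\depth(R)=0$; by the results of \cite{DE} on depth-zero rings of Burch index at least two (cf.\ the discussion following Question~\ref{introquestion1}), $\syz_i(M)$ has $k$ as a direct summand for all $i\ge 5$, and since the minimal free resolution of $k$ begins with the map $(x_1,\dots,x_n)\colon R^n\to R$, this forces $\I^1_{i+1}(M)=\m$ for $i\ge 5$, so that $\bigwedge^{\beta_i}\syz_i(M)\cong R/\m = k$.

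The exterior-power identity and the index bookkeeping are routine, so the only point demanding care is part (3): it rests on quoting from \cite{DE} the precise homological degree ($i\ge 5$) at which the residue field splits off the $i$-th syzygy over a depth-zero ring of Burch index at least two --- equivalently, at which $\I^1_i(M)$ becomes $\m$ --- and making sure this is the bound in play rather than the coarser bound involving the seventh syzygy recalled in the introduction. Thus the main obstacle is less a mathematical difficulty than a matter of invoking \cite{DE} in exactly the right form.
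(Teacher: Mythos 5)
Your proof is correct and follows essentially the same route as the paper's: both reduce the statement to the identity $\bigwedge^{\beta_i}\syz_i(M)\cong R/\I^1_{i+1}(M)$ and then invoke Corollary~\ref{cor:eisenbud}, Theorems~\ref{CI_periodicity} and~\ref{Golod_periodicity}, and the relevant result of \cite{DE}. The only difference is that the paper obtains the top-exterior-power formula by citing \cite[Corollary 1.3]{BE_77}, whereas you rederive it directly from right-exactness of exterior powers applied to a minimal presentation --- a correct, if more self-contained, proof of the same standard fact.
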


\begin{proof}
    Let $N$ be a finitely generated $R$-module that is minimally generated by $\mu$ elements. Its top exterior power $\bigwedge^{\mu} N$ is cyclic and hence isomorphic to $R/\ann(\bigwedge^{\mu} N)$. By \cite[Corollary 1.3]{BE_77}, we have $\bigwedge^{\mu} N\cong R/\I^1(\varphi)$, where $\varphi$ is a minimal presentation matrix of $N$. Now apply \Cref{cor:eisenbud}, \Cref{CI_periodicity}, \Cref{Golod_periodicity}, and \cite[Theorem 0.1]{DE}.
\end{proof}

Next, we use Theorems~\ref{CI_periodicity} and~\ref{Golod_periodicity} to relate boundedness of Betti numbers over complete intersections and Golod rings to the vanishing of stable ideals of minors:

\begin{thm}
\label{thm:bounded}
Let $R$ be a local ring and $M$ a finitely generated $R$-module with rank. Denote by $\syz_i(M)$ (resp. $\beta_i$) the $i^{\th}$ syzygy (resp. Betti number) of $M$ over $R$. 
\begin{enumerate}
\item If $\itot(\syz_n(M),r) = 0$ for some $r > 0$ and $n \ge 0$, then there exists $N \gg 0$ such that $\beta_i < N$ for all $i \ge 0$.
\item Suppose $R$ is a complete intersection or Golod ring. We have $\istab(M,r)=0$ for some $r > 0$ if and only if there exists $N \gg 0$ such that $\beta_i < N$ for all $i \ge 0$.  
\end{enumerate} 
\end{thm}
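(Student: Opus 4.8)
The plan is to treat Part~(1) as the technical heart and to obtain Part~(2) formally from it together with the periodicity theorems (Theorems~\ref{CI_periodicity} and~\ref{Golod_periodicity}). One direction of Part~(2) requires nothing: if $\beta_i < N$ for all $i$, then taking $r = N$ makes every differential of the minimal free resolution a matrix with fewer than $r$ rows and columns, so $\I^r_i(M) = 0$ for all $i$, whence $\itot(M,r) = 0$ and a fortiori $\istab(M,r) = 0$.

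For Part~(1), the point is that the rank hypothesis forces the ideals of minors of the differentials to be nonzero as long as the corresponding syzygies are ``large''. Let $(F,d)$ be the minimal $R$-free resolution of $M$. Localizing at a minimal prime $\p$ of $R$ (every nonzerodivisor lies outside $\p$, so $R_\p$ is a localization of the total ring of fractions, and hence $M_\p$ is $R_\p$-free), an induction up the exact sequences $0 \to \syz_{i+1}(M) \to F_i \to \syz_i(M) \to 0$ shows that each $\syz_i(M)_\p$ is $R_\p$-free of a rank $\rho_i$ independent of $\p$ and that $\beta_i = \rho_i + \rho_{i+1}$; moreover, over $R_\p$ each of these sequences splits. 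Thus $(d_i)_\p \colon (F_i)_\p \to (F_{i-1})_\p$ is a split surjection onto the free direct summand $\syz_i(M)_\p$ of rank $\rho_i$ followed by a split inclusion, so in suitable bases it contains a $\rho_i \times \rho_i$ identity block; hence $\I^r(d_i)_\p = R_\p$, and in particular $\I^r_i(M) \ne 0$, for every $r \le \rho_i$. Now if $\itot(\syz_n(M),r) = \sum_{i > n} \I^r_i(M) = 0$ (using that $F_{\ge n}$ is the minimal free resolution of $\syz_n(M)$), then $\rho_i < r$ for all $i > n$, so $\beta_i = \rho_i + \rho_{i+1} \le 2(r-1)$ for $i > n$; the finitely many remaining Betti numbers are trivially bounded, proving Part~(1).

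For the nontrivial direction of Part~(2), suppose $R$ is a complete intersection or a Golod ring and $\istab(M,r) = 0$. By Theorem~\ref{CI_periodicity} (resp. Theorem~\ref{Golod_periodicity}) we have $\I^r_i(M) = \I^r_{i+2}(M)$ for $i \gg 0$, and by Remark~\ref{ideal_of_minors_containment} (resp. the proof of Theorem~\ref{Golod_periodicity}) we have $\I^r_i(M) \subseteq \I^r_{i+2}(M)$ for $i \gg 0$; consequently $\itot(F_{\ge n},r) = \sum_{i>n}\I^r_i(M)$ is independent of $n$ once $n \gg 0$, and therefore $\istab(M,r)$ equals this stable value $\sum_{i \gg 0}\I^r_i(M)$. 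Since it vanishes, $\I^r_i(M) = 0$ for all $i \gg 0$, i.e. $\itot(\syz_n(M),r) = 0$ for $n \gg 0$. As $\syz_n(M)$ also has rank, Part~(1) now applies and gives that $\beta_\bullet(M)$ is bounded.

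The main obstacle is Part~(1), and within it the step of deducing $\I^r_i(M) \ne 0$ for $r \le \rho_i$ from the hypothesis that $M$ has rank; this rests on the (routine but indispensable) observation that a minimal free resolution splits after localizing at a minimal prime. The rank hypothesis is genuinely necessary here: over an Artinian non-regular ring the residue field has unbounded Betti numbers, yet $\itot(\syz_n(k),r) = 0$ as soon as $r$ exceeds the Loewy length, so Part~(1) fails without it. It is also worth stressing that the ``only if'' direction of Part~(2) really uses the complete intersection/Golod hypothesis through periodicity: for a general local ring, $\istab(M,r) = \bigcap_n \itot(F_{\ge n},r)$ could vanish through cancellation in a strictly decreasing intersection even when infinitely many $\rho_i$ are at least $r$.
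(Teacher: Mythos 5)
Your proof is correct and takes essentially the same approach as the paper: for Part~(1) you localize at a minimal prime of $R$ rather than passing to the total ring of fractions, but the content is identical---the rank hypothesis makes the minimal free resolution split after localization, giving $\beta_i = \rho_i + \rho_{i+1}$ and $\I^r_i(M) \neq 0$ if and only if $r \le \rho_i$. Your Part~(2) correctly fills in the details the paper leaves implicit (stabilization of $\itot(F_{\ge n},r)$ via periodicity, then reduction to Part~(1)).
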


\begin{proof}
Since $M$ has rank, the modules $\syz_i(M)$ have rank for all $i \ge 0$. Let $T$ be the total ring of fractions of $R$, and denote by $G_i$ the free $T$-module $\syz_i(M) \otimes_R T$ for $i \ge 0$. Let $(F,d)$ be the minimal free resolution of $M$. For each $i$, we have a split short exact sequence of $T$-modules
$0 \to G_{i+1} \to F_i \otimes_R T \to G_i \to 0$.
Notice that the rank of each $d_i$ (i.e. the size of the largest nonvanishing minor of $d_i$) is equal to 
the rank of $G_i$. We conclude that $\beta_i = \rk(d_i) + \rk(d_{i+1})$ for all $i > 0$. Now, suppose the Betti numbers of $M$ are unbounded. Choose $i > n$ such that $\beta_i \ge 2r - 1$. We have $\I^r_\ell(M) = 0$ for $\ell \ge n$ by assumption, and so $\rk(d_{\ell}) < r$ for all $\ell \ge n$. Thus, $\beta_i = \rk(d_i) + \rk(d_{i+1}) \le 2r-2$, a contradiction. This proves (1). Part (2) follows from (1), Theorem~\ref{CI_periodicity}, and Theorem~\ref{Golod_periodicity}. 
\end{proof}
 
We now use Theorems~\ref{CI_periodicity} and~\ref{Golod_periodicity} to establish some constraints on stable ideals of minors over complete intersections and Golod rings:

\begin{thm}\label{CI_singular_locus}
 Let $R$ be a local complete intersection or Golod ring and $M$ a finitely generated $R$-module with rank. Assume there exists $r \ge 1$ such that $\istab(M,r) \neq 0$.
 \begin{enumerate}
     \item We have $V(\istab(M,r)) \subseteq \{\p \in \spec(R) \text{ : } \pd_{R_{\p}}(M_{\p}) = \infty  \} \subseteq \sing(R)$. 
     \item If $R$ is a localization of a finitely generated algebra over a field or is equicharacteristic and excellent, then there is some $n_r \ge 1$ such that $\ca(R)^{n_r} \subseteq \istab(M,r)$.
    \item Assume $R$ is a complete intersection. If $R$ is a localization of a finitely generated algebra over a field and has a perfect residue field, then there is some $m_r \ge 1$ such that $\jac(R)^{m_r} \subseteq \istab(M,r)$. 
 \end{enumerate}
\end{thm}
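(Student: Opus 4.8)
The plan is to reduce all three parts to a single set-theoretic containment of closed subsets of $\spec(R)$ coming from the periodicity theorems, and then to pass from that containment to the asserted ideal inclusions using Noetherianity.

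First I would record the consequence of Theorems~\ref{CI_periodicity} and~\ref{Golod_periodicity} that makes localization available: since the ideals $\I^r_n(M)$ are eventually $2$-periodic, one has $\istab(M,r) = \I^r_k(M) + \I^r_{k+1}(M)$ for all $k \gg 0$, so $\istab(M,r)$ is finitely determined and localizes as $\istab(M,r)_\p = \I^r\big((d_k)_\p\big) + \I^r\big((d_{k+1})_\p\big)$ for $k \gg 0$, where $(F,d)$ is the minimal free resolution of $M$. I would also dispose of the case $\pd_R(M) < \infty$, in which $\istab(M,r) = R$ and every assertion is trivial, so that from now on all Betti numbers $\beta_n$ are nonzero.

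For Part (1), the inclusion $\{\p : \pd_{R_\p}(M_\p) = \infty\} \subseteq \sing(R)$ is immediate, and I would prove the other inclusion contrapositively: if $\pd_{R_\p}(M_\p) < \infty$, then $\istab(M,r)_\p = R_\p$, so $\p \notin V(\istab(M,r))$. The point is that, because $M$ (hence every syzygy $\syz_n(M)$) has a rank $\rho_n$, and because $(\syz_n(M))_\p$ is a sufficiently high syzygy of the finite–projective–dimension module $M_\p$, the module $(\syz_n(M))_\p$ is free of rank $\rho_n$ for $n \gg 0$; consequently $(d_n)_\p$ is the composite of a split surjection onto a free module of rank $\rho_{n-1}$ with a split injection, hence is conjugate to a block–identity matrix, so $\I^r((d_n)_\p)$ equals $R_\p$ if $\rho_{n-1} \ge r$ and $0$ otherwise. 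Thus $\istab(M,r)_\p = R_\p$ exactly when $\rho_n \ge r$ for some $n \gg 0$. Finally, $\istab(M,r) \ne 0$ forces $\I^r_n(M) \ne 0$ — hence $\rho_{n-1} = \rk_R(d_n) \ge r$ — for all large $n$ of at least one parity, which gives $\rho_n \ge r$ for such $n$ and completes the argument.

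For Parts (2) and (3), Part (1) gives $V(\istab(M,r)) \subseteq \sing(R)$. Under the hypotheses of Part (2) one has $\sing(R) = V(\ca(R))$ by \cite[Theorems 5.3 and 5.4]{iyengar_takahashi_ca}, so $\ca(R) \subseteq \sqrt{\istab(M,r)}$; since $R$ is Noetherian, $\ca(R)$ is finitely generated, whence $\ca(R)^{n_r} \subseteq \istab(M,r)$ for some $n_r \ge 1$. For Part (3) I would combine this with \cite[Theorem 3.4]{IYENGAR2021280}, which under the stated hypotheses gives $\jac(R)^{t} \subseteq \ca(R)$ for some $t \ge 1$, to obtain $\jac(R)^{t n_r} \subseteq \ca(R)^{n_r} \subseteq \istab(M,r)$; alternatively one may argue directly from $\sing(R) = V(\jac(R))$, which holds here by the Jacobian criterion, and the same radical argument. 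The main obstacle is Part (1): identifying $(d_n)_\p$ up to change of basis when $\pd_{R_\p}(M_\p) < \infty$, and carefully pinning down the equivalence ``$\istab(M,r)_\p = R_\p$ $\iff$ $\rho_n \ge r$ for some $n \gg 0$'' together with the implication ``$\istab(M,r) \ne 0 \Rightarrow \rho_n \ge r$ for large $n$ of one parity''. Once Part (1) is in place, Parts (2) and (3) are formal.
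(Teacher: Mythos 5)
Your proof is correct and follows essentially the same approach as the paper. The one presentational difference is in Part (1): the paper cites a standard Fitting-ideal lemma (\Cref{singM}) stating that for $i \ge \depth(R)$ one has $V(\I^{\rk(d_i)}_i(F)) = \{\p : \pd_{R_\p}(M_\p) = \infty\}$, and then simply observes that $\I^r_i(F) \ne 0$ forces $r \le \rk(d_i)$ and hence $\I^{\rk(d_i)}_i(F) \subseteq \I^r_i(F)$, whereas you re-derive the content of that lemma inline by localizing and putting $(d_k)_\p$ into block-identity form when $\pd_{R_\p}(M_\p) < \infty$. Both are the same computation, one phrased globally at the level of ideals and one phrased locally; your treatment of Parts (2) and (3) coincides with the paper's (the paper uses your ``alternative'' Jacobian-criterion argument for (3) rather than routing through (2)). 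The only small wrinkle is an internal indexing shift in your $\rho_n$ versus $\rk(d_n)$ bookkeeping (you appear to use $\syz_n = \ker d_n$ where the paper uses $\syz_n = \im d_n$), which is consistent within your writeup and does not affect the argument.
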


\begin{remark}
In the setting of Theorem~\ref{CI_singular_locus}(3), it follows from \cite[Theorem 5.1]{Wang1994OnTF} that, when $\mathbb{Q} \subseteq R$, we have $\jac(\widehat{R})^r\subseteq \istab(\widehat{M},r)$, where $\widehat{R}$ denotes the completion of $R$.
\end{remark}
 
To prove Theorem~\ref{CI_singular_locus}, we will need the following well-known fact:

\begin{lemma} \label{singM}
    Let $R$ be a local ring, $M$ a finitely generated $R$-module with rank, and $(\F, d)$ the minimal free resolution of $M$. Set $\I_n(\F) \coloneqq \I^{\rk(d_n)}_n(\F)$.
    \begin{enumerate}
        \item For $\p \in \spec(R)$ and $n\geq 1$, $\pd_{R_{\p}}(M_{\p})> n$ if and only if $\p \in V(\I_n(\F))$.
        \item 
        For all $i \ge \depth(R)$, we have
        $
        V(\I_i(\F)) =  \{\p \in \spec(R) \text{ : } \pd_{R_{\p}}(M_{\p}) = \infty  \}.
        $
    \end{enumerate}
\end{lemma}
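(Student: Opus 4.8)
Both statements are localized incarnations of the Buchsbaum--Eisenbud acyclicity criterion, combined with the Auslander--Buchsbaum formula, and the plan is to deduce them from two standard inputs. The first is that, since $R_\p$ is $R$-flat, $\F\otimes_R R_\p$ is a (generally non-minimal) free resolution of $M_\p$; hence $\pd_{R_\p}(M_\p)$ is the length of the minimal free resolution of $M_\p$, and for $n\ge 1$ one has $\pd_{R_\p}(M_\p)<n$ if and only if the syzygy $\syz_{n-1}(M)_\p=\operatorname{im}(d_n)_\p$ is a free $R_\p$-module (with the convention $\syz_0(M)=M$). The second is the Fitting-ideal criterion: for a finitely generated module $N$ of constant rank $\rho$ whose associated primes lie among those of the ground ring --- which holds for every syzygy module, being a submodule of a finite free module --- $N_\p$ is $R_\p$-free if and only if $\operatorname{Fitt}_\rho(N)\not\subseteq\p$.

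For part (1), I would use the presentation $F_n\xrightarrow{d_n}F_{n-1}\to \syz_{n-1}(M)\to 0$ together with the rank identity $\operatorname{rank}F_{n-1}=\rk(d_n)+\operatorname{rank}\syz_{n-1}(M)$ to identify $\I_n(\F)=\I^{\rk(d_n)}_n(\F)$ with the Fitting ideal $\operatorname{Fitt}_{\operatorname{rank}\syz_{n-1}(M)}(\syz_{n-1}(M))$. The two inputs above then yield $\p\in V(\I_n(\F))\iff\syz_{n-1}(M)_\p$ is not $R_\p$-free $\iff\pd_{R_\p}(M_\p)\ge n$. Concretely, one argues that a maximal-size minor of $d_n$ escaping $\p$ produces, after elementary operations over $R_\p$, a block form $\operatorname{diag}(I_s,N)$ with $s=\rk(d_n)$; here $\I^{s+1}(d_n)$ is contained in the nilradical (its rank is $s$, so every $(s+1)$-minor vanishes at all minimal primes), so $N$ has nilpotent entries, and the no-embedded-torsion property of $\syz_{n-1}(M)$ forces $\operatorname{im}(d_n)_\p$ to be genuinely free of rank $s$. (This last step is where one pays attention over a non-reduced base.)

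For part (2), part (1) gives $V(\I_i(\F))=\{\p:\pd_{R_\p}(M_\p)\ge i\}$; these form a descending chain of closed subsets of $\spec(R)$ whose intersection is $\{\p:\pd_{R_\p}(M_\p)=\infty\}$, so by Noetherianity of $\spec(R)$ the chain stabilizes, and the content is to show the stabilization has occurred once $i\ge\depth(R)$, i.e.\ that for such $i$ finiteness of $\pd_{R_\p}(M_\p)$ implies $\pd_{R_\p}(M_\p)<i$. The mechanism is the syzygy depth estimate: $\syz_j(M)_\p$ is a $j$-th syzygy of $M_\p$, so $\depth_{R_\p}(\syz_j(M)_\p)\ge\min\{j,\depth(R_\p)\}$; if $\pd_{R_\p}(M_\p)<\infty$ then $\pd_{R_\p}(\syz_j(M)_\p)<\infty$, and the Auslander--Buchsbaum formula over $R_\p$ gives $\pd_{R_\p}(\syz_j(M)_\p)\le\depth(R_\p)-\min\{j,\depth(R_\p)\}$, which vanishes as soon as $j\ge\depth(R_\p)$. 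Taking $j=i\ge\depth(R)$ settles every prime with $\depth(R_\p)\le\depth(R)$.

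The step I expect to be the main obstacle is exactly the remaining case in part (2), the primes $\p$ where $\depth(R_\p)>\depth(R)$: since depth can strictly rise under localization, the bound ``$\pd_{R_\p}(M_\p)\le\depth(R)$ when finite'' is not available, and one must iterate the syzygy argument (descending to $\syz_{\depth(R_\p)}(M)$ and bootstrapping) or exploit whatever extra structure is in force --- note that in every intended application $\pd_R(M)=\infty$, and then $V(\I_i(\F))=\{\pd=\infty\}$ is forced for all $i>\dim(R)$ and the difficulty evaporates. The only other delicate point is the non-reduced-ring bookkeeping in part (1), dealt with by the nilpotence-of-higher-minors observation above.
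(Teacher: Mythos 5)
Your argument for part (1) is essentially the one the paper uses: both amount to the fact that, over a local ring, $\coker(\varphi)$ is free iff the ideal of maximal-size minors of $\varphi$ is the unit ideal, together with the observation that $\coker(d_n)_\p=\syz_{n-1}(M)_\p$ is free over $R_\p$ exactly when $\pd_{R_\p}(M_\p)<n$; the paper quotes Bruns--Herzog 1.4.9 and 1.4.11 where you invoke the Fitting-ideal criterion, but these carry the same content. Two minor points. The nilpotent-block bookkeeping is unnecessary: since $M$ has rank, the $(\rk(d_n)+1)$-minors of $d_n$ vanish in the total quotient ring $T$, and $R\to T$ is injective, so they already vanish in $R$ and the block $N$ is literally zero. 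Also, your equivalence $\p\in V(\I_n(\F))\iff\pd_{R_\p}(M_\p)\ge n$ is the correct one; the $>n$ in the printed statement is an off-by-one.

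For part (2), the difficulty you isolate is genuine, and it is in fact fatal to the bound $i\ge\depth(R)$ as printed. The paper's own proof is the one-line ``(2) follows from (1) and the Auslander--Buchsbaum formula,'' i.e.\ exactly the argument you carry out, and it fails at the point you identify: $\depth(R_\p)$ can exceed $\depth(R)$. Concretely, take $R = k[[x,y,z,w,v]]/\bigl((x,y)\cap(z,w,v)\bigr)$, a fiber product with $\depth R = 1$ and $\dim R = 3$, and $M = R/\p$ with $\p=(x,y,z,w)$. Then $\p$ contains the non-zero-divisor $x+z$, so $M$ has rank (zero); the matrix $d_1=(x\;\;y\;\;z\;\;w)$ has rank $1$, so $\I_1(\F)=\p$ and $\p\in V(\I_1(\F))$; but $R_\p\cong k[[z,w,v]]_{(z,w)}$ is a $2$-dimensional regular local ring and $M_\p$ is its residue field, so $\pd_{R_\p}(M_\p)=2<\infty$, even though $i=1=\depth(R)$. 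There is no bootstrapping fix; the correct threshold is $i>\sup_\p\depth(R_\p)$, exactly as your Auslander--Buchsbaum computation shows, and in particular $i>\dim R$ always works. This leaves the paper's applications intact (they only use the lemma for $i\gg 0$, and in the complete-intersection case $R$ is Cohen--Macaulay, where $\depth(R_\p)\le\dim R=\depth R$ restores the printed bound), but your suspicion about the depth-jump phenomenon is exactly right and should not be papered over.
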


\begin{proof}
    By \cite[Proposition 1.4.9]{bruns_herzog} and \cite[Lemma 1.4.11]{bruns_herzog}, the $R_{\p}$-module $\coker(d_n)_{\p}$ is free if and only if $\I_n(\F)\not\subseteq \p$; this proves (1). (2) follows from (1) and the Auslander-Buchsbaum formula. 
\end{proof}

\begin{proof}[Proof of Theorem~\ref{CI_singular_locus}]
     Let $F$ be the minimal free resolution of $M$. Applying Theorems~\ref{CI_periodicity} and~\ref{Golod_periodicity}, choose an integer $i \ge \depth(R)$ such that $\istab(M,r)  = \I^r_i(\F)+\I^r_{i+1}(\F)$. By assumption, at least one of $\I^r_i(\F)$ and $\I^r_{i+1}(\F)$ is nonzero; without loss of generality, assume $\I^r_i(\F) \neq 0$. By \Cref{singM}(2), we have 
     $$
     V(\istab(M,r)) \subseteq V(\I^r_i(\F)) \subseteq V(\I_i(\F))  = \{\p \in \spec(R) \text{ : } \pd_{R_{\p}}(M_{\p}) = \infty  \} \subseteq \sing(R);
     $$
     where, as in Lemma~\ref{singM}, $\I_i(F) \coloneqq \I_i^{\rk(d_i)}(\F)$.
    This proves (1). (2) follows from (1) and \cite[Theorems 5.3 and 5.4]{iyengar_takahashi_ca}, and (3) follows from (1) and the Jacobian criterion.
 \end{proof}

\begin{cor}
Let $(R,\m)$ be a local complete intersection or Golod ring and $M$ a finitely generated $R$-module with rank.
\begin{enumerate} 
\item  If the Betti numbers of $M$ are unbounded, then the conclusions of \Cref{CI_singular_locus} hold for all $r \geq 1$.
    
    \item If $\pd_{R_{\p}} M_{\p} < \infty$ for all non-maximal primes $\p$ in $R$, then $\istab(M,r)$ is either 0 or $\m$-primary. In particular, if $R$ has an isolated singularity, then $\istab(M,r)$ is either 0 or $\m$-primary.
    \end{enumerate}
\end{cor}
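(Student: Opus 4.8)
The plan is to read this corollary off from \Cref{CI_singular_locus}; the two parts differ only in which instances of that theorem's running hypothesis, $\istab(M,r)\neq 0$, one must supply.

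For part (1), I would first note that \Cref{CI_singular_locus} assumes only that $R$ is a complete intersection or Golod, that $M$ has rank, and that $\istab(M,r)\neq 0$ for the relevant $r$, and that it then yields conclusions (1)--(3) for that particular $r$. So the entire content is to check that unbounded Betti numbers force $\istab(M,r)\neq 0$ for every $r\geq 1$. This is precisely the contrapositive of one implication of \Cref{thm:bounded}(2): if $\istab(M,r)=0$ for some $r>0$, then the Betti numbers of $M$ are bounded. Hence under the hypothesis of (1) no such $r$ exists, $\istab(M,r)\neq 0$ for all $r\geq 1$, and \Cref{CI_singular_locus} applies verbatim (with its respective supplementary hypotheses) for each $r$.

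For part (2), I would fix $r\geq 1$ and dispose of the case $\istab(M,r)=0$ at once. Otherwise \Cref{CI_singular_locus}(1) gives $V(\istab(M,r))\subseteq\{\p\in\spec(R):\pd_{R_\p}(M_\p)=\infty\}$, and the hypothesis says the latter set is contained in $\{\m\}$ (it is empty when $\pd_R(M)<\infty$). A nonzero ideal whose vanishing locus lies in $\{\m\}$ is either $\m$-primary or the unit ideal; the unit ideal occurs exactly for perfect $M$ (by the Proposition following \Cref{def:stable}), a case in which the eventual Betti numbers vanish and the assertion is vacuous. The ``in particular'' clause then follows because an isolated singularity has regular---hence finite global dimension---local rings at all non-maximal primes, so $\pd_{R_\p}(M_\p)<\infty$ there automatically and the previous sentence applies.

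I expect no serious obstacle: the result is bookkeeping on top of \Cref{CI_singular_locus} and \Cref{thm:bounded}. The only points meriting a sentence of care are the degenerate perfect/unit-ideal case in part (2), and---if one wants the monotonicity that makes ``for some $r$'' and ``for all large $r$'' interchangeable in the Betti-boundedness dictionary---the elementary observation, via Laplace expansion, that an $(r+1)\times(r+1)$ minor lies in the ideal generated by the $r\times r$ minors, so that $\istab(M,r)$ is non-increasing in $r$.
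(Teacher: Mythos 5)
Your proof is correct and takes essentially the same route as the paper: part (1) is read off the contrapositive of one direction of Theorem~\ref{thm:bounded}(2), and part (2) is read off Theorem~\ref{CI_singular_locus}(1) together with the observation that a local ring has a unique closed point. The paper's own proof is simply ``(1) follows from Theorem~\ref{thm:bounded}(2), and (2) is immediate from \Cref{CI_singular_locus}(1)''; you have supplied the bookkeeping that this compresses.

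Two small remarks. First, your closing Laplace-expansion observation about $\istab(M,r)$ being non-increasing in $r$ is true but unnecessary here: Theorem~\ref{thm:bounded}(2) is an \emph{if and only if} with quantifier ``for some $r$'', so its contrapositive already gives $\istab(M,r)\neq 0$ for \emph{all} $r$ directly, with no monotonicity argument needed. Second, on the degenerate case in part (2): if $M$ is perfect, $\istab(M,r)=R$, which is neither $0$ nor $\m$-primary, so strictly speaking the statement as printed omits this case; calling it ``vacuous'' is not quite right, since the hypotheses are satisfied but the stated dichotomy fails. This is, however, an imprecision in the paper's statement rather than in your argument---the paper's one-line proof glosses over the same point---and the intended reading is clearly that $M$ has infinite projective dimension, where your argument is airtight.
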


\begin{proof}
(1) follows from Theorem~\ref{thm:bounded}(2), and (2) is immediate from \Cref{CI_singular_locus}(1).
\end{proof}

\bibliography{references}
\bibliographystyle{amsalpha}
\Addresses

\end{document}